\newtheorem{theorem}{Theorem}[section]
\newtheorem{corollary}[theorem]{Corollary}
\newtheorem{lemma}[theorem]{Lemma}
\newtheorem{proposition}[theorem]{Proposition}
\newtheorem{question}{Question}[section]
\theoremstyle{definition}
\newtheorem{definition}[theorem]{Definition}
\theoremstyle{remark}
\newtheorem{remark}[theorem]{Remark}
\theoremstyle{example}
\newcommand{\be}{\begin{equation}}
\newcommand{\ee}{\end{equation}}
\newcommand{\bea}{\begin{eqnarray}}
\newcommand{\eea}{\end{eqnarray}}
\newcommand{\ben}{\begin{eqnarray*}}
\newcommand{\een}{\end{eqnarray*}}
\newcommand{\bet}{\begin{equation}
\begin{split}}
\newcommand{\eet}{\end{split}
\end{equation}}
\DeclareMathOperator{\Ext}{\mathscr{E}\text{\kern -3pt {\calligra\Large xt}}\,\,}
\begin{document}

\title[Multiplier ideal sheaves on complex spaces with singularities]
      {A note on multiplier ideal sheaves on complex spaces with singularities}

\author{Zhenqian Li}

\date{\today}
\subjclass[2010]{14F18, 32U05, 32B10, 32C15, 32C35, 32S05}
\thanks{\emph{Key words}. Multiplier ideal sheaf, Plurisubharmonic function, Ohsawa-Takegoshi $L^2$ extension theorem, Graded system of ideal sheaves, Log canonical threshold, Semi-log-canonical singularity, Weakly normal complex space}
\thanks{E-mail: lizhenqian@amss.ac.cn}

\begin{abstract}
The goal of this note is to present some recent results of our research concerning multiplier ideal sheaves on complex spaces and singularities of plurisubharmonic functions. We firstly introduce multiplier ideal sheaves on complex spaces (\emph{not} necessarily normal) via Ohsawa's extension measure, as a special case of which, it turns out to be the so-called Mather-Jacobian multiplier ideals in the algebro-geometric setting. As applications, we obtain a reasonable generalization of (algebraic) adjoint ideal sheaves to the analytic setting and establish some extension theorems on K\"ahler manifolds from \emph{singular} hypersurfaces. Relying on our multiplier and adjoint ideals, we also give characterizations for several important classes of singularities of pairs associated to plurisubharmonic functions.

Moreover, we also investigate the local structure of singularities of log canonical locus of plurisubharmonic functions. Especially, in the three-dimensional case, we show that for any plurisubharmonic function with log canonical singularities, its associated multiplier ideal subscheme is weakly normal, by which we give a complete classification of multiplier ideal subschemes with log canonical singularities.
\end{abstract}

\maketitle
\tableofcontents

\section{Introduction}

The multiplier ideal sheaves together with a variant of them, the adjoint ideal sheaves, which measure the singularities of plurisubharmonic functions, turn out to be a powerful tool in complex geometry and algebraic geometry in recent years; one can refer to \cite{De10, G-Z_open, Siu05}$\&$\cite{La04, Kim15, Taka10} for a general exposition to the analytic and algebro-geometric side of the theory respectively. Throughout this article, all complex spaces are always assumed to be reduced and paracompact unless otherwise specified; we refer to \cite{GR84, KK83, Loja91, Richberg68} for main references on the theory of complex spaces.

In the present article, we firstly extend the concept of multiplier ideal sheaves to the singular case, by which, we then make the adjoint ideal sheaves to be well-defined in the analytic setting successfully and prove some interesting properties related to them. Since our multiplier ideals sheaves can be defined on any complex space of pure dimension (\emph{not} necessarily normal or $\mathbb{Q}$-Gorenstein), we also study the singularities of complex spaces together with plurisubharmonic functions on them by both of ideals; one can refer to \cite{L-Z} for a definition of multiplier ideal sheaves on $\mathbb{Q}$-Gorenstein complex spaces.

\subsection{Multiplier and adjoint ideal sheaves on complex spaces with singularities}

In \cite{Gue12}, Guenancia generalized the notion of adjoint ideal sheaves (along SNC divisors) to the analytic setting by means of the Ohsawa-Takegoshi-Manivel extension theorem, and then proved the coherence for the locally H\"older continuous plurisubharmonic weights (see also \cite{Kim15}). However, the analytic adjoint ideal sheaves defined in \cite{Gue12, Kim15} are \emph{not} coherent in general and one can find an explicit example given by Guan and the author in \cite{G-L_adjoint}. In the event that we would like to construct analytic adjoint ideals along arbitrary closed complex subspace of pure codimension as in the algebraic setting \cite{Taka10, Eisen10} and establish an analogous adjunction exact sequence, it is necessary for us to understand what is the meaning of multiplier ideals on complex spaces (may be singular), i.e., the last non-trivial term in the adjunction exact sequence.

Let $X$ be a complex space of pure dimension $n$, $\omega_X$ the \emph{dualizing sheaf} of $X$ and $\varphi\in L_{\text{loc}}^1(X_\text{reg})$ with respect to the Lebesgue measure. Then, we can define the \emph{Nadel-Ohsawa multiplier ideal sheaf} $\widehat{\mathscr{I}}(\varphi)\subset\mathscr{M}_X$ associated to the weight $\varphi$ on $X$ (see Definition \ref{MIS_NO}), via Ohsawa's extension measure. Moreover, we have the following:

\begin{theorem} \emph{(Theorem \ref{property}).}
Let $\varphi\in\emph{Psh}(X)$ be a plurisubharmonic function on $X$ such that $\varphi\not\equiv-\infty$ on every irreducible component of $X$. Then, $\widehat{\mathscr{I}}(\varphi)\subset\mathcal{O}_X$ is a coherent ideal sheaf and satisfies the strong openness property, i.e.,      $$\widehat{\mathscr{I}}(\varphi)=\widehat{\mathscr{I}}_+(\varphi):=\bigcup\limits_{\varepsilon>0}\widehat{\mathscr{I}}\big((1+\varepsilon)\varphi\big).$$
\end{theorem}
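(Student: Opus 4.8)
The plan is to reduce the whole statement to the classical smooth situation by resolving $X$ and transporting the defining integrability condition of $\widehat{\mathscr{I}}(\varphi)$ to a resolution. Since coherence and the strong openness identity are both local on $X$, I would fix a point and a relatively compact neighbourhood throughout. For the inclusion $\widehat{\mathscr{I}}(\varphi)\subset\mathcal{O}_X$, I would first use that a plurisubharmonic $\varphi$ is locally bounded above, so that $e^{-2\varphi}$ is locally bounded below by a positive constant; hence $\widehat{\mathscr{I}}(\varphi)\subseteq\widehat{\mathscr{I}}(0)$, and it suffices to check $\widehat{\mathscr{I}}(0)\subseteq\mathcal{O}_X$, i.e. that a section of $\mathscr{M}_X$ which is $L^2$ against Ohsawa's extension measure is holomorphic. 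This is a property of the construction that I would read off from Definition \ref{MIS_NO}. The hypothesis $\varphi\not\equiv-\infty$ on every irreducible component guarantees that the ideal is nonzero on each component, so that the resulting object is a genuine ideal sheaf rather than a degenerate submodule of $\mathscr{M}_X$.

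The technical heart is a change-of-variables formula on a log resolution. Applying Hironaka's resolution (factoring through the normalization of the possibly non-normal $X$), I would take a proper modification $\pi\colon\widetilde{X}\to X$ with $\widetilde{X}$ smooth and with the relevant singularity data normal crossing. The point is that, under $\pi$, Ohsawa's extension measure $dV_X[\varphi]$ pulls back, up to factors bounded above and below, to Lebesgue measure on $\widetilde{X}$ twisted by the Mather discrepancy $\widehat{K}_{\widetilde{X}/X}$ and the Jacobian ideal of $\pi$; consequently the defining integrability $\int|f|^2e^{-2\varphi}\,dV_X[\varphi]<\infty$ becomes a classical $L^2$ condition $\int|\pi^\ast f|^2 e^{-2\pi^\ast\varphi-\psi}\,dV_{\widetilde{X}}<\infty$, where $\psi$ is a \emph{fixed} function with analytic (logarithmic) singularities encoding $\widehat{K}_{\widetilde{X}/X}$ and the Jacobian divisor, independent of $\varphi$. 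In other words I would establish the identity
\[
\widehat{\mathscr{I}}(\varphi)=\pi_\ast\big(\mathscr{I}(2\pi^\ast\varphi+\psi)\big),
\]
of subsheaves of $\mathscr{M}_X$, where $\mathscr{I}(\cdot)$ on the right is the usual smooth multiplier ideal. Verifying that the $\varphi$-dependence of $dV_X[\varphi]$ is harmless, i.e. that it only alters the integrand by factors with positive bounds so that integrability is unaffected, is exactly the step I expect to be the main obstacle, together with the bookkeeping of the discrepancy and Jacobian contributions for a non-normal $X$.

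Granting this identity, both conclusions follow from known results on the smooth manifold $\widetilde{X}$. For coherence: incorporating the positive part of $\psi$ into the plurisubharmonic weight and its effective divisorial remainder as a fixed twist, $\mathscr{I}(2\pi^\ast\varphi+\psi)$ is coherent by Nadel's theorem \cite{De10}; since $\pi$ is proper, Grauert's direct image theorem shows that $\pi_\ast$ of a coherent sheaf is coherent, whence $\widehat{\mathscr{I}}(\varphi)$ is coherent. For strong openness: because $\psi$ has analytic singularities it factors out as a fixed divisorial twist, so the relative (twisted) form of the Guan--Zhou theorem \cite{G-Z_open}, applied to the plurisubharmonic weight $2\pi^\ast\varphi$ on $\widetilde{X}$ while keeping $\psi$ fixed, gives locally $\mathscr{I}(2\pi^\ast\varphi+\psi)=\mathscr{I}\big(2(1+\varepsilon)\pi^\ast\varphi+\psi\big)$ for all sufficiently small $\varepsilon>0$. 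Since this increasing family stabilizes locally, $\pi_\ast$ commutes with the union, and the pullback of $(1+\varepsilon)\varphi$ is $(1+\varepsilon)\pi^\ast\varphi$, compatible with the change-of-variables identity; we therefore obtain
\[
\widehat{\mathscr{I}}(\varphi)=\bigcup_{\varepsilon>0}\widehat{\mathscr{I}}\big((1+\varepsilon)\varphi\big)=\widehat{\mathscr{I}}_+(\varphi),
\]
as claimed.
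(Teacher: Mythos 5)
Your proposal follows essentially the same route as the paper: the paper passes to a strong factorizing desingularization $\pi$ of $X\subset\Omega$, uses the pushforward description of Ohsawa's measure (Remark \ref{computation}) to obtain $\widehat{\mathscr{I}}(\varphi)=\pi_*\big(\mathcal{O}(K_{\widetilde\Omega/\Omega}-rR_X)|_{\widetilde X}\otimes\mathscr{I}(\varphi\circ\pi|_{\widetilde X})\big)$ — your fixed weight $\psi$ is exactly this divisorial twist — and then coherence follows from Nadel plus Grauert and strong openness from Guan--Zhou plus properness, as you say. One clarification: the extension measure is $dV_X[\Psi]$ with $\Psi$ determined by $\mathscr{I}_X$ alone, not by $\varphi$, so the ``$\varphi$-dependence of the measure'' you single out as the main obstacle is vacuous; the genuine content is the change-of-variables formula for $dV_X[\Psi]$ under $\pi$, which is precisely the paper's Remark \ref{computation}.
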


\begin{remark}
If the embedding dimension $\text{emb}_xX\leq n+1$ for all $x\in X$, an analogous result has been established in \cite{Li_adjoint}. When $X$ is smooth, $\widehat{\mathscr{I}}(\varphi)$ is nothing but the usual multiplier ideal sheaf $\mathscr{I}(\varphi)$.
\end{remark}

In the algebro-geometric setting, the authors in \cite{deFD14, EIM16} introduced the so-called Mather-Jacobian multiplier ideals via the Nash blow-up, which turned out to be very helpful in the study of higher-dimensional algebraic geometry. In fact, we can show that the analytic corresponding of Mather-Jacobian multiplier ideals is nothing but the multiplier ideals associated to analytic weights defined above, i.e.,

\begin{theorem} \emph{(Theorem \ref{MIS_equi}).}
Let $X$ be a complex space of pure dimension, $\mathfrak{a}\subset\mathcal{O}_X$ a nonzero ideal sheaf on $X$ and $c\in\mathbb{R}_{\geq0}$. Then, the Mather-Jacobian multiplier ideal sheaf associated to $\mathfrak{a}^c$ coincides with the Nadel-Ohsawa multiplier ideal sheaf associated to $\varphi_{c\cdot\frak{a}}$, i.e.,
$$\mathscr{I}_\emph{MJ}(\frak{a}^c)=\widehat{\mathscr{I}}(\varphi_{c\cdot\frak{a}}),$$
where $\varphi_{c\cdot\frak{a}}=\frac{c}{2}\log(\sum_k|f_k|^2)$ and $(f_k)$ is any local system of generators of $\frak{a}$.
\end{theorem}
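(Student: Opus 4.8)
The plan is to compute both ideals on a single sufficiently fine resolution and to match the resulting divisorial conditions. Since both sides are local, fix $x\in X$ and a local embedding $X\hookrightarrow\Omega\subseteq\mathbb{C}^N$ of pure codimension $r=N-n$. I would choose a log resolution $\mu\colon Y\to X$ from a complex manifold $Y$ that factors through the Nash blow-up of $X$ and on which $\mathfrak{a}\cdot\mathcal{O}_Y=\mathcal{O}_Y(-F)$, $\mathrm{Jac}_X\cdot\mathcal{O}_Y=\mathcal{O}_Y(-J_{Y/X})$ and the Mather relative canonical $\hat{K}_{Y/X}$ are all realized by simple normal crossing divisors, together with $\mathrm{div}(\mu^*f)$ for the test germs $f$. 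On such a $\mu$ the Mather-Jacobian multiplier ideal is, by its definition from \cite{deFD14, EIM16} (whose independence of the chosen $\mu$ I take as known),
\[
\mathscr{I}_{\mathrm{MJ}}(\mathfrak{a}^c)=\mu_*\mathcal{O}_Y\bigl(\hat{K}_{Y/X}-J_{Y/X}-\lfloor cF\rfloor\bigr).
\]

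Next I would rewrite the defining integral of the Nadel-Ohsawa ideal on $Y$. Recall from Definition \ref{MIS_NO} that $f\in\widehat{\mathscr{I}}(\varphi)_x$ precisely when $|f|^2e^{-2\varphi}$ is locally integrable near $x$ against Ohsawa's measure, which on the smooth locus $X_{\mathrm{reg}}$ is comparable to $i^{n^2}\,\eta\wedge\overline{\eta}$ for a local generator $\eta$ of the dualizing sheaf $\omega_X$ restricted to $X_{\mathrm{reg}}$, where $\omega_X\cong\Omega^n_{X_{\mathrm{reg}}}$ is invertible. Since $\mu$ is proper and biholomorphic over $X_{\mathrm{reg}}$, the change of variables gives
\[
\int_{U\cap X_{\mathrm{reg}}}|f|^2e^{-2\varphi_{c\cdot\mathfrak{a}}}\,i^{n^2}\eta\wedge\overline{\eta}=\int_{\mu^{-1}(U)}|\mu^*f|^2\bigl(\textstyle\sum_k|f_k\circ\mu|^2\bigr)^{-c}\,i^{n^2}\,\mu^*\eta\wedge\overline{\mu^*\eta}.
\]
Writing $\mu^*\eta=h_\eta\cdot e_Y$ for a local frame $e_Y$ of $\omega_Y$ and using $\sum_k|f_k\circ\mu|^2\asymp|s_F|^2$ where $s_F$ cuts out $F$, the integrand becomes $|\mu^*f|^2\,|s_F|^{-2c}\,|h_\eta|^2\,dV_Y$ with $dV_Y\asymp i^{n^2}e_Y\wedge\overline{e_Y}$. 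Because $\mathrm{div}(\mu^*f)+\mathrm{div}(h_\eta)+F$ has normal crossings, finiteness near each component $E_i$ is the monomial condition $\mathrm{ord}_{E_i}(\mu^*f)+\mathrm{ord}_{E_i}(h_\eta)-c\,\mathrm{ord}_{E_i}(F)>-1$, whence
\[
\widehat{\mathscr{I}}(\varphi_{c\cdot\mathfrak{a}})=\mu_*\mathcal{O}_Y\bigl(\mathrm{div}(\mu^*\eta)-\lfloor cF\rfloor\bigr).
\]

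The crux is therefore to identify $\mathrm{div}(\mu^*\eta)$ with the Mather-Jacobian discrepancy $\hat{K}_{Y/X}-J_{Y/X}$, and this is the step I expect to be the main obstacle. I would argue it through the fundamental class map $c_X\colon\Omega^n_X\to\omega_X$: on $X_{\mathrm{reg}}$ the restrictions of the ambient coordinate $n$-forms satisfy $dz_I|_{X_{\mathrm{reg}}}=J_I\,\eta$, where the $J_I$ are the $n\times n$ minors of the Jacobian of the defining equations and generate $\mathrm{Jac}_X$. Pulling this identity back by $\mu$ (legitimate over $X_{\mathrm{reg}}$ and extending across the exceptional locus, since the $\mu^*dz_I$ are genuine holomorphic $n$-forms on $Y$) yields $\mu^*dz_I=(\mu^*J_I)\,h_\eta\,e_Y$ as sections of $\omega_Y$. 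Taking $\mathrm{ord}_{E_i}$ and minimizing over $I$, the left-hand side produces the Mather discrepancy $\hat{k}_{E_i}:=\mathrm{ord}_{E_i}(\hat{K}_{Y/X})$ by the defining property of the Nash blow-up, while $\min_I\mathrm{ord}_{E_i}(\mu^*J_I)=\mathrm{ord}_{E_i}(\mathrm{Jac}_X\cdot\mathcal{O}_Y)=j_{E_i}:=\mathrm{ord}_{E_i}(J_{Y/X})$; hence $\mathrm{ord}_{E_i}(h_\eta)=\hat{k}_{E_i}-j_{E_i}$, that is $\mathrm{div}(\mu^*\eta)=\hat{K}_{Y/X}-J_{Y/X}$. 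Substituting into the formula for $\widehat{\mathscr{I}}(\varphi_{c\cdot\mathfrak{a}})$ then matches $\mathscr{I}_{\mathrm{MJ}}(\mathfrak{a}^c)$. The genuine technical points to be handled carefully are the comparability of Ohsawa's measure with $i^{n^2}\eta\wedge\overline{\eta}$ near the singular locus, including its formulation when $\omega_X$ fails to be locally free at $x$ (where one works with the generic residue on $X_{\mathrm{reg}}$ rather than a single global generator), and the exactness of the endpoint in the monomial integrability, which reproduces the algebraic round-down $\lfloor\,\cdot\,\rfloor$ by the standard computation.
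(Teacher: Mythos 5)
Your overall strategy lands on the same skeleton as the paper's argument: both proofs pass to a resolution, write each ideal as a pushforward $\mu_*\mathcal{O}(D-\lfloor cF\rfloor)$, and reduce the theorem to identifying the correction divisor $D$ with the Mather--Jacobian discrepancy $\hat K_{Y/X}-J_{Y/X}$. Where you diverge is in how that identification is obtained. The paper works with an \emph{embedded} strong factorizing desingularization $\pi$ of $X\subset\Omega$ with $\pi^*\mathscr{I}_X=\mathscr{I}_{\widetilde X}\cdot\mathscr{I}_{R_X}$, reads off from Remark \ref{computation} that Ohsawa's measure is the direct image of $|\mathrm{Jac}(\pi)/h_{R_X}^r|^2\,dV_{\widetilde X}$, hence that $\widehat{\mathscr{I}}(\varphi)=\pi_*\big(\mathcal{O}(K_{\widetilde\Omega/\Omega}-rR_X)|_{\widetilde X}\otimes\mathscr{I}(\varphi\circ\pi|_{\widetilde X})\big)$, and then invokes Lemma 4.4 of \cite{Eisen10} to convert $(K_{\widetilde\Omega/\Omega}-rR_X)|_{\widetilde X}$ into $\hat K_{\widetilde X/X}-J_{\widetilde X/X}$. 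You instead work intrinsically on a log resolution factoring through the Nash blow-up and re-derive the identification from $dz_I|_{X_{\mathrm{reg}}}=J_I\eta$; that computation is correct (it is essentially the one underlying \cite{deFD14, EIM16}), so your route buys a self-contained proof of the divisor identity at the price of having to justify the measure comparison yourself rather than importing it from the factorizing-resolution picture.

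Two points must be shored up before this is a complete proof. First, the two-sided comparability of $dV_X[\Psi]$ with $i^{n^2}\eta\wedge\overline\eta$ is the analytic heart of the theorem and cannot merely be flagged: the definition of the measure only yields a pointwise density on $X_{\mathrm{reg}}$, and what has to be shown is that the comparison constants do not degenerate as one approaches $X_{\mathrm{sing}}$. This is precisely the content of Remarks \ref{computation} and \ref{measure_equi}, and the paper states that Remark \ref{measure_equi} is obtained by the same argument as the theorem itself, so it is not available to you as an independent input. Second, when $\omega_X$ fails to be invertible at $x$ there is no $\eta$ and no $h_\eta$; the correct intrinsic density is $\big(\sum_I i^{n^2}dz_I\wedge\overline{dz_I}\big)\big/\big(\sum_I|J_I|^2\big)$ restricted to $X_{\mathrm{reg}}$, and your order computation should be run on that expression directly --- the numerator pulls back to $|s_{\hat K}|^2$ times a frame of $\omega_Y$ precisely because $\mu$ factors through the Nash blow-up, the denominator to $|s_J|^2$ --- which eliminates any reference to a local generator of $\omega_X$. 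Finally, a small slip: the $J_I$ are the $r\times r$ minors of the Jacobian matrix of a generating set of $\mathscr{I}_X$ ($r$ the codimension), not $n\times n$ minors.
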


Let $M$ be an $(n+r)$-dimensional complex manifold and $X\subset M$ a closed complex subspace of pure codimension $r$ with $g=(g_1,\dots,g_m)$ a system of generators of $\mathscr{I}_X$ near $x\in M$ ($m$ may depend on $x$). Let $\varphi\in\text{Psh}(M)$ be a plurisubharmonic function such that $\varphi|_X\not\equiv-\infty$ on every irreducible component of $X$. Then, we can obtain the following:

\begin{theorem} \emph{(Theorem \ref{adjoint}).} \label{Aadjoint}
There exists an ideal sheaf $$Adj_X(\varphi)\subset\mathcal{O}_M,$$
called the \emph{analytic adjoint ideal sheaf} associated to $\varphi$ along $X$, sitting in an exact sequence:
$$0\longrightarrow\mathscr{I}(\varphi+r\log|\mathscr{I}_X|)\stackrel{\iota}{\longrightarrow} Adj_X(\varphi)\stackrel{\rho}{\longrightarrow} i_*\widehat{\mathscr{I}}(\varphi|_X)\longrightarrow0, \eqno(\star)$$
where $i:X\hookrightarrow M,\ \iota$ and $\rho$ are the natural inclusion and restriction map respectively, and $\log|\mathscr{I}_X|:=\log|g|=\frac{1}{2}\log(|g_1|^2+\cdots+|g_m|^2)$ near every point $x\in M$.
\end{theorem}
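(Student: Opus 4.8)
The plan is to construct $Adj_X(\varphi)$ locally by an $L^2$-type condition and then verify that the two natural maps $\iota$ and $\rho$ fit into the claimed short exact sequence $(\star)$. The guiding principle is the Ohsawa-Takegoshi-Manivel extension philosophy: a germ $f\in\mathcal{O}_{M,x}$ should lie in $Adj_X(\varphi)$ precisely when $f$, weighted by $e^{-2\varphi}$ against a suitably renormalized measure that blows up transversally to $X$, is locally integrable. Concretely, writing $g=(g_1,\dots,g_m)$ for generators of $\mathscr{I}_X$ near $x$, I would declare $f_x\in Adj_X(\varphi)_x$ iff
$$\int_{U}\frac{|f|^2\,e^{-2\varphi}}{|g|^{2(r-1)}\big(\log|g|\big)^2}\,d\lambda<+\infty$$
on a small neighborhood $U$ of $x$ (the precise renormalizing factor being dictated by Ohsawa's extension measure used in Definition~\ref{MIS_NO}, so that the residue/restriction to $X_{\text{reg}}$ matches the Nadel-Ohsawa measure on $X$). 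The first task is to check this condition is independent of the chosen generators $g$ and defines a coherent ideal sheaf; coherence should follow from strong openness (our Theorem~\ref{property}) applied to the weight $\varphi+(r-1)\log|\mathscr{I}_X|$ together with a H\"ormander/Skoda-type closedness argument, or alternatively by comparison with the algebraic Mather-Jacobian adjoint ideals via Theorem~\ref{MIS_equi}.

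Next I would build the three arrows. The inclusion $\iota$ is essentially tautological: if $f$ is integrable against the heavier weight $e^{-2(\varphi+r\log|\mathscr{I}_X|)}$, i.e.\ $f_x\in\mathscr{I}(\varphi+r\log|\mathscr{I}_X|)_x$, then a fortiori it satisfies the weaker adjoint integrability condition (the adjoint measure differs by the factor $|g|^2(\log|g|)^2$, which only makes the integrand smaller near $X$), so $\mathscr{I}(\varphi+r\log|\mathscr{I}_X|)\subset Adj_X(\varphi)$ and the map is injective as a map of subsheaves of $\mathcal{O}_M$. The restriction $\rho$ sends $f$ to its trace $f|_X$ on $X$; the content here is that the adjoint integrability of $f$ forces the transversal boundary behavior needed for $f|_X$ to define a germ in the Nadel-Ohsawa ideal $\widehat{\mathscr{I}}(\varphi|_X)$ of the (possibly singular) subspace $X$, which is where the choice of Ohsawa's extension measure is engineered to make the residue computation work out. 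Exactness in the middle, $\ker\rho=\operatorname{im}\iota$, reduces to showing that $f|_X\equiv0$ as an element of $i_*\mathcal{O}_X$ is equivalent to $f$ gaining the full extra weight $r\log|\mathscr{I}_X|$ in its integrability; one direction is clear, and the other uses a division/vanishing argument on $X_{\text{reg}}$ together with the fact that $X$ has pure codimension $r$.

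The genuinely hard part is surjectivity of $\rho$, i.e.\ the statement that every germ $h_x\in\widehat{\mathscr{I}}(\varphi|_X)_x$ lifts to an adjoint germ $f_x\in Adj_X(\varphi)_x$ with $f|_X=h$. This is exactly where an Ohsawa-Takegoshi-Manivel extension theorem is indispensable, and the subtlety is that $X$ is \emph{singular}, so the classical extension theorems (which extend from smooth, or at worst SNC, hypersurfaces) do not apply directly. My plan is to first establish the lift over the regular locus $X_{\text{reg}}$ using the $L^2$ extension theorem with the Ohsawa measure as the controlling weight on the right-hand side, obtaining an extension whose $L^2$ norm against the adjoint measure is bounded by the Nadel-Ohsawa norm of $h$ on $X_{\text{reg}}$; the bound is uniform precisely because the extension measure is chosen to absorb the Jacobian/Mather correction. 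Then I would handle the singular locus $X_{\text{sing}}$, which has codimension $\geq r+1$ in $M$, by a Riemann-type removable singularity / $L^2$ extension across analytic sets of the right codimension argument, so that the $L^2$-integrable extension over $X_{\text{reg}}$ extends holomorphically across $X_{\text{sing}}$ without losing the adjoint integrability. I expect the matching of constants and the transversal normalization through the singular points — ensuring the locally defined lifts glue to a global section of $Adj_X(\varphi)$ and that $\rho$ is genuinely onto at every point — to be the main technical obstacle, and this is where I anticipate needing the full strength of Theorem~\ref{property} (coherence and strong openness) to run a sheaf-theoretic gluing/partition-of-unity argument compatible with the $L^2$ estimates.
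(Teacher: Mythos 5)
There is a genuine gap, and it sits exactly at the point you flag as routine. You propose to \emph{define} $Adj_X(\varphi)$ purely by the $L^2$ condition $\int_U |f|^2e^{-2\varphi}\,|g|^{-2(r-1)}(\log|g|)^{-2}\,d\lambda<+\infty$ and then ``check'' coherence. Two problems: first, the normalizing exponent should be $|g|^{2r}$, not $|g|^{2(r-1)}$ (for $r=1$ your denominator degenerates to $(\log|g|)^2$, which is bounded near $X$ and gives the wrong sheaf even for smooth divisors); second, and more seriously, the sheaf cut out by such a pure integrability condition is known \emph{not} to be coherent in general --- this is precisely the failure of Guenancia's definition exhibited in \cite{G-L_adjoint}, which is the motivation for this theorem. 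The paper's construction in the smooth case is therefore not the raw $L^2$ ideal $Adj_X^0(\varphi)$ but
$$Adj_X(\varphi):=\bigcup_{\varepsilon>0}Adj_X^0\big((1+\varepsilon)\varphi\big)\cap\big(\mathscr{J}+\mathscr{I}_X\big),$$
where $\mathscr{J}$ is any coherent lift of $\widehat{\mathscr{I}}(\varphi|_X)$ to $\mathcal{O}_M$; the union over $\varepsilon$ (strong openness) and the intersection with $\mathscr{J}+\mathscr{I}_X$ are what force coherence and make $\rho$ land surjectively onto $i_*\widehat{\mathscr{I}}(\varphi|_X)$. Your plan to deduce coherence of the raw $L^2$ sheaf from Theorem~\ref{property} cannot succeed, because the statement you would be proving is false.

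The treatment of the singular case also diverges from the paper in a way that matters. You propose to work directly on $M$, proving surjectivity of $\rho$ by Ohsawa--Takegoshi extension over $X_{\text{reg}}$ and then crossing $X_{\text{sing}}$ by a removable-singularity argument. The paper instead reduces to the smooth case: it takes a strong factorizing desingularization $\pi:\widetilde M\to M$ with $\pi^*(\mathscr{I}_X)=\mathscr{I}_{\widetilde X}\cdot\mathscr{I}_{R_X}$, applies Case (i) to the smooth strict transform $\widetilde X$, twists the resulting exact sequence by $\mathcal{O}_{\widetilde M}(K_{\widetilde M/M}-rR_X)$, and sets $Adj_X(\varphi):=\pi_*\big(Adj_{\widetilde X}(\varphi\circ\pi)\otimes\mathcal{O}_{\widetilde M}(K_{\widetilde M/M}-rR_X)\big)$. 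Exactness of $(\star)$ downstairs is then \emph{not} automatic from right-exactness of restriction: it requires a local vanishing theorem (Matsumura's injectivity/vanishing for higher direct images) to kill the $R^1\pi_*$ of the kernel term. Your removable-singularity approach has no mechanism to control the Jacobian/Mather correction hidden in the measure $dV_X[\Psi]$ near $X_{\text{sing}}$ --- that correction is exactly what the factor $\mathcal{O}_{\widetilde M}(K_{\widetilde M/M}-rR_X)$ encodes on the resolution --- so even if a holomorphic lift exists across $X_{\text{sing}}$ (which it does for codimension reasons), you have no way to verify it satisfies the adjoint integrability or that the resulting sheaf is coherent and independent of choices. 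If you want to salvage your outline, you should adopt the paper's two-step structure: prove Case (i) with the corrected definition above, then define the general case by pushforward from a strong factorizing resolution and invoke local vanishing.
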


\begin{remark}
$(1)$ Similar to the proof of Proposition 2.11 in \cite{Gue12}, it follows that $Adj_X(\varphi)$ coincides with the algebraic adjoint ideal sheaf defined by Takagi and Eisenstein in \cite{Taka10, Eisen10} whenever $\varphi$ has analytic singularities.

$(2)$ If $\varphi$ is a H\"older plurisubharmonic function and $X$ is a smooth divisor, our definition of $Adj_X(\varphi)$ is the same as that given by Guenancia (see Theorem 2.16 in \cite{Gue12}). Similarly, we always have $Adj_X(\varphi)\subset\mathscr{I}(\varphi)$, and $Adj_X(\varphi)_x=\mathscr{I}(\varphi)_x$ for each $x\not\in X$ as well.
\end{remark}

\subsection{Applications to extension theorems and singularities of pairs}

Using our notion of multiplier and adjoint ideal sheaves, we firstly establish the following extension theorem involved multiplier and adjoint ideals on K\"ahler manifolds from singular hypersufaces.

\begin{theorem} \label{vanishing_noncpt}
Let $(M,\omega)$ be a weakly pseudoconvex K\"ahler manifold and $H\subset M$ a (reduced) complex hypersurface. Let $L$ be a holomorphic line bundle on $M$ equipped with a (possibly singular) Hermitian metric $e^{-2\varphi_L}$ such that $\varphi_L|_H\not\equiv-\infty$ on each irreducible component of $H$ and $\sqrt{-1}\partial\bar\partial\varphi_L\geq\varepsilon\omega$ for some positive continuous function $\varepsilon$ on $M$. Then, the natural restriction map induces a surjection
$$H^0\big(M,\omega_M\otimes\mathcal{O}_M(L\otimes[H])\otimes Adj_H(\varphi_L)\big)\longrightarrow H^0\big(H,\omega_H\otimes\mathcal{O}_H(L|_H)\otimes\widehat{\mathscr{I}}(\varphi_L|_H)\big),$$
and
$$H^q\big(M,\omega_M\otimes\mathcal{O}_M(L\otimes[H])\otimes Adj_H(\varphi_L)\big)\widetilde{\longrightarrow}H^q\big(H,\omega_H\otimes\mathcal{O}_H(L|_H)\otimes\widehat{\mathscr{I}}(\varphi_L|_H)\big)$$
for all $q\geq1$.
\end{theorem}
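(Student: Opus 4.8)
The plan is to deduce both assertions from the adjunction exact sequence $(\star)$ of Theorem \ref{adjoint} together with a Nadel-type vanishing theorem. Since $H$ is a reduced hypersurface, I apply $(\star)$ with $X = H$ and codimension $r = 1$, so that $\mathscr{I}_H$ is locally principal, say $\mathscr{I}_H = (g)$, and $\log|\mathscr{I}_H| = \log|g|$. Writing $\mathscr{L} := \omega_M \otimes \mathcal{O}_M(L \otimes [H])$ for brevity, I tensor the exact sequence
$$0 \longrightarrow \mathscr{I}(\varphi_L + \log|\mathscr{I}_H|) \longrightarrow Adj_H(\varphi_L) \longrightarrow i_*\widehat{\mathscr{I}}(\varphi_L|_H) \longrightarrow 0$$
with the locally free sheaf $\mathscr{L}$, which preserves exactness. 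For the right-hand term the projection formula gives $\mathscr{L} \otimes i_*\widehat{\mathscr{I}}(\varphi_L|_H) \cong i_*\big(i^*\mathscr{L} \otimes \widehat{\mathscr{I}}(\varphi_L|_H)\big)$, and the adjunction formula for the Cartier divisor $H$, i.e. $\omega_H \cong (\omega_M \otimes \mathcal{O}_M([H]))|_H$ (valid for possibly singular $H$ in terms of the dualizing sheaf), identifies $i^*\mathscr{L}$ with $\omega_H \otimes \mathcal{O}_H(L|_H)$. Since $i$ is a closed embedding, the cohomology of this term on $M$ coincides with that of $\omega_H \otimes \mathcal{O}_H(L|_H) \otimes \widehat{\mathscr{I}}(\varphi_L|_H)$ on $H$.

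Next I pass to the long exact cohomology sequence of the tensored short exact sequence, relating $H^\bullet(M, \mathscr{L} \otimes \mathscr{I}(\varphi_L + \log|\mathscr{I}_H|))$, $H^\bullet(M, \mathscr{L} \otimes Adj_H(\varphi_L))$ and $H^\bullet(H, \omega_H \otimes \mathcal{O}_H(L|_H) \otimes \widehat{\mathscr{I}}(\varphi_L|_H))$. Both assertions follow formally once the first of these vanishes in positive degrees: surjectivity of the degree-zero restriction map is implied by the vanishing of the degree-one cohomology of $\mathscr{L} \otimes \mathscr{I}(\varphi_L + \log|\mathscr{I}_H|)$, while the isomorphisms in degrees $q \geq 1$ follow from the simultaneous vanishing of its degree-$q$ and degree-$(q+1)$ cohomology. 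One should also check that, under the identifications above, the map $\rho$ of $(\star)$ induces exactly the natural restriction map in the statement, so that the surjection and isomorphisms produced are the claimed ones.

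The crux is therefore the Nadel vanishing
$$H^q\big(M, \mathscr{L} \otimes \mathscr{I}(\varphi_L + \log|\mathscr{I}_H|)\big) = 0 \qquad (q \geq 1).$$
To establish it I view $\varphi_L + \log|\mathscr{I}_H|$ as the weight of a singular Hermitian metric on $L \otimes [H]$, in which $\log|\mathscr{I}_H| = \log|g|$ is the local weight of the canonical singular metric on $\mathcal{O}_M([H])$ attached to its defining section. By the Poincar\'e--Lelong formula $\sqrt{-1}\partial\bar\partial\log|\mathscr{I}_H|$ equals the positive current of integration along $H$, so the curvature hypothesis is preserved: $\sqrt{-1}\partial\bar\partial(\varphi_L + \log|\mathscr{I}_H|) = \sqrt{-1}\partial\bar\partial\varphi_L + [H] \geq \varepsilon\omega$ in the sense of currents, the added current being $\geq 0$. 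The required vanishing is then precisely the Demailly--Nadel vanishing theorem for a singular metric of curvature $\geq \varepsilon\omega$ on $L \otimes [H]$ over the weakly pseudoconvex K\"ahler manifold $(M, \omega)$, obtained from H\"ormander--Demailly $L^2$-estimates with the help of a plurisubharmonic exhaustion function. The main work is thus the positivity bookkeeping for the twisted weight and the verification that all ideals involved are coherent (Theorem \ref{property}), which legitimizes the cohomological manipulations; the vanishing itself is the principal analytic input and the only genuinely non-formal step, everything else reducing cleanly to the adjunction sequence already established.
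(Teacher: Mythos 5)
Your proposal is correct and follows essentially the same route as the paper: twist the adjunction exact sequence $(\star)$ by $\omega_M\otimes\mathcal{O}_M(L\otimes[H])$, identify the quotient term via the adjunction formula $\omega_H\cong(\omega_M\otimes\mathcal{O}_M([H]))|_H$, and kill the cohomology of the subsheaf term in positive degrees by the Nadel vanishing theorem on weakly pseudoconvex K\"ahler manifolds. The only cosmetic difference is that the paper rewrites the subsheaf term as $\omega_M\otimes\mathcal{O}_M(L)\otimes\mathscr{I}(\varphi_L)$ (untwisting by the canonical section of $[H]$) and applies Nadel vanishing to $(L,e^{-2\varphi_L})$, whereas you apply it directly to $L\otimes[H]$ with weight $\varphi_L+\log|g|$; these are the same groups and the same vanishing.
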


\begin{remark}
In fact, we can further deduce $$H^q\big(M,\omega_M\otimes\mathcal{O}_M(L\otimes[H])\otimes Adj_H(\varphi_L)\big)=0$$ for all $q\geq1$, thanks to a singular version of Nadel vanishing theorem in \cite{De10}.
\end{remark}

Combining the above result and Siu's construction of metric, we obtain a singular version of Takayama's extension theorem in \cite{Taka06} as follows.

\begin{theorem} \emph{(Theorem \ref{Takayama}).}
Let $M$ be a smooth complex projective variety and $H\subset M$ a (reduced) hypersurface. Let $L$ be a holomorphic line bundle over $M$ equipped with a (possibly singular) Hermitian metric $e^{-2\varphi_L}$ such that $\sqrt{-1}\partial\bar\partial\varphi_L\geq\varepsilon\omega$ with $\varepsilon>0$ for some smooth Hermitian metric $\omega$ on $M$ and $\widehat{\mathscr{I}}(\varphi_L|_H)=\mathcal{O}_H$.

Then, the natural restriction map
$$H^0\big(M,\big(\omega_M\otimes\mathcal{O}_M(L\otimes[H])\big)^{\otimes m}\big)\longrightarrow H^0\big(H,\big(\omega_H\otimes\mathcal{O}_H(L)\big)^{\otimes m}\big)$$ is surjective for every $m > 0$.
\end{theorem}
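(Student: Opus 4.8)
The plan is to deduce the statement from the adjoint-ideal extension Theorem \ref{vanishing_noncpt} by an induction on $m$ combined with Siu's metric construction, with the entire singular geometry of $H$ absorbed into Theorem \ref{vanishing_noncpt}. First I would recast the two bundles adjunction-theoretically. Setting $F_m:=(\omega_M\otimes\mathcal{O}_M([H]))^{\otimes(m-1)}\otimes L^{\otimes m}$, the adjunction isomorphism $\omega_H=(\omega_M\otimes\mathcal{O}_M([H]))|_H$ gives $\omega_M\otimes\mathcal{O}_M(F_m\otimes[H])=(\omega_M\otimes\mathcal{O}_M(L\otimes[H]))^{\otimes m}$ on $M$ and $\omega_H\otimes F_m|_H=(\omega_H\otimes\mathcal{O}_H(L))^{\otimes m}$ on $H$. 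Hence it suffices to produce, for every $m\ge1$, a (possibly singular) metric $e^{-2\psi_m}$ on $F_m$ with $\sqrt{-1}\partial\bar\partial\psi_m\ge\varepsilon'\omega$ for some $\varepsilon'>0$ and with $\widehat{\mathscr{I}}(\psi_m|_H)=\mathcal{O}_H$: granting this, Theorem \ref{vanishing_noncpt} applied to $(F_m,\psi_m)$ along $H$ produces a surjection onto $H^0\big(H,(\omega_H\otimes\mathcal{O}_H(L))^{\otimes m}\big)$ from $H^0\big(M,(\omega_M\otimes\mathcal{O}_M(L\otimes[H]))^{\otimes m}\otimes Adj_H(\psi_m)\big)$, and since $Adj_H(\psi_m)\subset\mathcal{O}_M$ the latter space sits inside $H^0\big(M,(\omega_M\otimes\mathcal{O}_M(L\otimes[H]))^{\otimes m}\big)$, whose restriction to $H$ therefore surjects as well.

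The construction of $\psi_m$ I would carry out by induction on $m$. For $m=1$ one takes $\psi_1=\varphi_L$, and the two hypotheses $\sqrt{-1}\partial\bar\partial\varphi_L\ge\varepsilon\omega$ and $\widehat{\mathscr{I}}(\varphi_L|_H)=\mathcal{O}_H$ are exactly what is required, so this case is immediate from Theorem \ref{vanishing_noncpt}. For the inductive step, assume surjectivity at level $m-1$, so that a basis $\{\sigma_j\}$ of $H^0\big(M,(\omega_M\otimes\mathcal{O}_M(L\otimes[H]))^{\otimes(m-1)}\big)$ restricts onto a spanning set of $H^0\big(H,(\omega_H\otimes\mathcal{O}_H(L))^{\otimes(m-1)}\big)$. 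Following Siu, put $\Theta_{m-1}:=\frac12\log\sum_j|\sigma_j|^2_{h_0}$, a semipositive singular weight on the level-$(m-1)$ bundle for a fixed smooth reference metric $h_0$, and set $\psi_m:=\Theta_{m-1}+\varphi_L$, a metric on $F_m=(\omega_M\otimes\mathcal{O}_M(L\otimes[H]))^{\otimes(m-1)}\otimes L$. The curvature requirement is then free of charge, $\sqrt{-1}\partial\bar\partial\psi_m\ge\sqrt{-1}\partial\bar\partial\varphi_L\ge\varepsilon\omega$, which is precisely where the strict positivity of $\varphi_L$ enters.

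The heart of the matter is the triviality $\widehat{\mathscr{I}}(\psi_m|_H)=\mathcal{O}_H$, equivalently the local integrability of $e^{-2\psi_m|_H}=e^{-2\Theta_{m-1}|_H-2\varphi_L|_H}$ against Ohsawa's extension measure at every point of $H$, including $\mathrm{Sing}(H)$. Away from the base locus of the restricted system $\{\sigma_j|_H\}$ the weight $\Theta_{m-1}|_H$ is bounded and the claim collapses to the seed hypothesis $\widehat{\mathscr{I}}(\varphi_L|_H)=\mathcal{O}_H$; the genuine difficulty is at the base locus, where $\Theta_{m-1}|_H$ has poles. I expect this to be the main obstacle, and I would resolve it by Siu's comparison: a section-built metric dominates each individual section, so for any $s\in H^0\big(H,(\omega_H\otimes\mathcal{O}_H(L))^{\otimes m}\big)$ one has $|s|^2e^{-2\Theta_m|_H}\le C$ with $\Theta_m$ the level-$m$ section metric, while the two section metrics are comparable up to a globally generated factor, $\Theta_m\le\Theta_{m-1}+\frac12\log\sum_k|\tau_k|^2+C$, once $\omega_H\otimes\mathcal{O}_H(L)$ is globally generated. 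To secure global generation with the fixed $L$, one twists by an auxiliary very ample bundle $A$, runs the induction for the $L\otimes A$-type bundles, and then removes $A$ in the limit by dividing by the power and invoking $\sqrt{-1}\partial\bar\partial\varphi_L\ge\varepsilon\omega$ to absorb the curvature defect, which is exactly the bookkeeping of \cite{Taka06}. Because every integrability statement is taken with respect to Ohsawa's measure and the dualizing sheaves $\omega_M,\omega_H$, the singularities of $H$ are never confronted directly: they are entirely encapsulated in Theorem \ref{vanishing_noncpt} and in the definition of $\widehat{\mathscr{I}}$ on the reduced, possibly non-normal space $H$.
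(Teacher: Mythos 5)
Your proposal follows precisely the route the paper indicates for this theorem (the paper gives no written proof beyond the pointer ``combine Theorem \ref{vanishing_noncpt} with Siu's construction of the metric in \cite{Siu98, Siu04}, see also \cite{Varolin08_extension}''): adjunction to rewrite both sides as adjoint bundles for $F_m$, inductively built Siu section metrics $\Theta_{m-1}$ added to $\varphi_L$, the ample twist by $A$ with the limiting argument of \cite{Taka06} to remove it, and Theorem \ref{vanishing_noncpt} as the extension engine at each stage. One imprecision worth fixing in the write-up: the reduction should not ask for $\widehat{\mathscr{I}}(\psi_m|_H)=\mathcal{O}_H$ --- with $\psi_m=\Theta_{m-1}+\varphi_L$ this will generally fail at base points of the restricted system, and Siu's comparison does not prove it --- but only for the weaker statement that every $s\in H^0\big(H,(\omega_H\otimes\mathcal{O}_H(L))^{\otimes m}\big)$ satisfies the $L^2$ condition against $e^{-2\psi_m|_H}$, i.e.\ lies in $H^0\big(H,(\omega_H\otimes\mathcal{O}_H(L))^{\otimes m}\otimes\widehat{\mathscr{I}}(\psi_m|_H)\big)$; that weaker membership is exactly what your last paragraph actually establishes and is all the surjectivity argument requires.
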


Moreover, we can also establish the following singular version of extension theorem for projective family due to Siu \cite{Siu98, Siu02, Siu04} (see also \cite{Paun07}), by which we generalize Siu's theorem on plurigenera to the case of singular fibers with log terminal singularities (see Corollary \ref{Siu_invariance}).

\begin{theorem} \emph{(Theorem \ref{Siu_extension}).}
Let $\pi:M\to\Delta$ be a projective family and $L$ a holomorphic line bundle over $M$ endowed with a (possibly singular) Hermitian metric $e^{-2\varphi_L}$ such that $\sqrt{-1}\partial\bar{\partial}\varphi_L\geq0$. Assume that the restriction of $\varphi_L$ to the central fiber $M_0$ is well defined and $M_0$ has at most log terminal singularities.

Then, the natural restriction map
$$H^0\big(M,\omega_M^{\otimes m}\otimes\mathcal{O}_M(L)\big)\longrightarrow H^0\big(M_{0},\omega_{M_0}^{\otimes m}\otimes\mathcal{O}_{M_0}(L)\otimes\widehat{\mathscr{I}}(\varphi_L|_{M_0})\big)$$ is surjective for every $m > 0$.
\end{theorem}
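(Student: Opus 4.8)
The plan is to adapt Siu's analytic proof of the invariance of plurigenera to the present singular setting, using the singular extension theorem (Theorem \ref{vanishing_noncpt}) as the fundamental extension step and the properties of the Nadel--Ohsawa multiplier ideal from Theorem \ref{property} and Theorem \ref{MIS_equi} to control the singularities of the central fiber. First I would record the adjunction bookkeeping: since $M_0=\pi^{-1}(0)$ is a fiber over a point of the disk, its associated divisor satisfies $\mathcal{O}_M([M_0])\cong\mathcal{O}_M$ via the defining section $\pi^{*}t$, and by adjunction for the dualizing sheaves one has $\omega_{M_0}\cong\big(\omega_M\otimes\mathcal{O}_M([M_0])\big)\big|_{M_0}$. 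Thus extending a section of $\omega_{M_0}^{\otimes m}\otimes\mathcal{O}_{M_0}(L)\otimes\widehat{\mathscr{I}}(\varphi_L|_{M_0})$ amounts to producing a section of $\omega_M\otimes\mathcal{O}_M\big((\omega_M^{\otimes(m-1)}\otimes L)\otimes[M_0]\big)$ restricting to it, i.e.\ exactly the map $\rho$ of the adjoint-ideal exact sequence $(\star)$ of Theorem \ref{adjoint}. The log terminal hypothesis on $M_0$, together with the Mather--Jacobian description of $\widehat{\mathscr{I}}$ (Theorem \ref{MIS_equi}), is what guarantees that the Jacobian/discrepancy correction terms coming from the singular adjunction are $>-1$ and hence absorbed into the multiplier ideal, so that the target sheaf is precisely the one appearing in the statement.

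The core of the argument is a Siu--Paun type bootstrapping. Fix $m>0$ and a section $s$ of $\omega_{M_0}^{\otimes m}\otimes\mathcal{O}_{M_0}(L)\otimes\widehat{\mathscr{I}}(\varphi_L|_{M_0})$ to be extended. Choosing an auxiliary ample line bundle $A$ on $M$ with a smooth positively curved metric, I would first carry out the extension with an ample twist: using the Ohsawa--Takegoshi-type surjectivity of Theorem \ref{vanishing_noncpt} (applied to the hypersurface $H=M_0$ with the semipositive weight $\varphi_L$ combined with the positive contribution of $A$), extend suitable sections of $\omega_{M_0}^{\otimes pm}\otimes\mathcal{O}_{M_0}(pL\otimes A)$ built from high powers $s^{\otimes p}$. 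These extensions then define, via $\tfrac{1}{p}\log\sum_i|\tilde s_i|^2$, a singular Hermitian metric on $\omega_M^{\otimes(m-1)}\otimes L\otimes\tfrac{1}{p}A$ whose curvature is semipositive and whose restriction to $M_0$ has, by construction and by strong openness (Theorem \ref{property}), multiplier ideal dominating $\widehat{\mathscr{I}}(\varphi_L|_{M_0})$.

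With this metric in hand, I would feed $L':=\omega_M^{\otimes(m-1)}\otimes L\otimes\tfrac{1}{p}A$ into the extension step once more to extend $s$ itself to a section of $\omega_M\otimes\mathcal{O}_M(L'\otimes[M_0])=\omega_M^{\otimes m}\otimes L\otimes\tfrac{1}{p}A$, now with an $L^2$ estimate uniform in $p$. Letting $p\to\infty$ so that the fractional twist $\tfrac{1}{p}A$ decays to zero, a normal-families argument extracts a holomorphic section of $\omega_M^{\otimes m}\otimes\mathcal{O}_M(L)$ restricting to $s$; here the coherence and strong openness of $\widehat{\mathscr{I}}$ ensure that the weak limit lies in the correct space and that no multiplier ideal is lost in the limit. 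This closes the argument and yields the claimed surjectivity for every $m$.

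The main obstacle I anticipate is not the analytic limiting procedure, which is by now fairly standard, but the singular bookkeeping along the central fiber: one must check that the dualizing sheaf $\omega_{M_0}$, the Mather--Jacobian multiplier ideal $\widehat{\mathscr{I}}(\varphi_L|_{M_0})$, and the exact sequence $(\star)$ fit together so that the restriction map of the statement is literally $\rho$ (up to the trivialization of $[M_0]$). Verifying that the bootstrapped semipositive metrics restrict to weights on the singular space $M_0$ whose multiplier ideal, computed via Ohsawa's extension measure, equals the prescribed $\widehat{\mathscr{I}}(\varphi_L|_{M_0})$ --- rather than something strictly smaller --- is precisely where the log terminal hypothesis must enter decisively through the Mather--Jacobian discrepancy bound, and is the step most likely to require care.
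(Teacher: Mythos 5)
The paper itself gives no proof of Theorem \ref{Siu_extension} --- it is stated as an announcement, with only the surrounding text (``Combining [Theorem \ref{vanishing_noncpt}] and Siu's construction of metric\dots'') indicating the intended strategy. Your overall scheme (adjunction bookkeeping $\omega_{M_0}\cong(\omega_M\otimes[M_0])|_{M_0}$ with $[M_0]$ trivialized by $\pi^*t$, an ample twist $A$ to create strict positivity, Siu--P\u{a}un bootstrapping, and the log terminal hypothesis entering through the Nadel--Ohsawa/Mather--Jacobian ideal of the singular fiber) is consistent with that intended route. However, there is a genuine gap at the analytic core: you use the \emph{cohomological} surjectivity of Theorem \ref{vanishing_noncpt} (adjunction exact sequence $(\star)$ plus Nadel vanishing) as the extension step, and then claim the resulting extensions come ``with an $L^2$ estimate uniform in $p$.'' The long-exact-sequence argument produces \emph{some} extension at each stage but carries no norm control whatsoever, so the limiting procedure cannot be run from it. What is actually needed at each stage is a quantitative Ohsawa--Takegoshi-type $L^2$ extension theorem with a uniform constant for the singular subvariety $M_0$, measured against Ohsawa's extension measure $dV_{M_0}[\Psi]$ --- i.e.\ the Guan--Zhou general $L^2$ extension theorem of \cite{G-Z_optimal}, which is precisely why the Nadel--Ohsawa ideal $\widehat{\mathscr{I}}(\varphi_L|_{M_0})$ (defined via that measure) is the natural target in the statement. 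Substituting the qualitative Theorem \ref{vanishing_noncpt} for this quantitative input is a step that would fail.

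A second, related defect is the final limiting step as you describe it: a ``normal-families'' limit of sections of $\omega_M^{\otimes m}\otimes L\otimes\tfrac{1}{p}A$ as $p\to\infty$ does not produce a section of $\omega_M^{\otimes m}\otimes L$, since these are sections of different bundles (and $\tfrac{1}{p}A$ is not a line bundle). The standard mechanism is different: the extended sections $\tilde\sigma_p$ of $(\omega_M^{\otimes m}\otimes L)^{\otimes p}\otimes A$ are used only to build a semipositive singular metric $h_\infty$ on $\omega_M^{\otimes(m-1)}\otimes L$ via a regularized limit of $\tfrac{1}{p}\log|\tilde\sigma_p|$; one then checks --- using the uniform estimates, the log terminal (equivalently, by Theorem \ref{rationality}, canonical) hypothesis on the l.c.i.\ fiber $M_0$, and the strong openness of Theorem \ref{property} --- that $s$ is $L^2$ along $M_0$ with respect to $h_\infty\cdot e^{-2\varphi_L/m}$ and the Ohsawa measure, and finally applies the $L^2$ extension theorem \emph{once} with this limiting weight to extend $s$ itself with no residual ample twist. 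Your proposal correctly identifies where the log terminal hypothesis must act, but the concluding analytic step needs to be restructured along these lines to be valid.
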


Whereas the multiplier and adjoint ideals encode much information on the singularities of the underlying complex space together with the plurisubharmonic weights on them, we give the following characterization of singularities of pairs associated to plurisubharmonic functions.

\begin{theorem} \emph{(Theorem \ref{SingPairs}).} \label{SingPairs0}
Let $X$ be a complex space of pure dimension with $x\in X$ a point and $\varphi\in\emph{Psh}(X)$. Then,

$(1)$ If $(X,\varphi)$ is log terminal at $x$, then $x$ is a rational singularity of $X$.

$(2)$ If $(X,\varphi)$ is log canonical at $x$, then the complex subspace $(A,(\mathcal{O}_{X}/\widehat{\mathscr{I}}(\varphi))|_A)$ is reduced near $x$; in addition, if $\varphi$ has analytic singularities, then $(A,(\mathcal{O}_{X}/\widehat{\mathscr{I}}(\varphi))|_A)$ is weakly normal at $x$, where $A:=N(\widehat{\mathscr{I}}(\varphi))$ denotes the zero-set of coherent ideal sheaf $\widehat{\mathscr{I}}(\varphi)$.
\end{theorem}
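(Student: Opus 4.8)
The plan is to treat the two parts separately, in each case reducing the analytic statement to a property of the Mather--Jacobian multiplier ideal through the identification $\widehat{\mathscr{I}}(\varphi)=\mathscr{I}_{\mathrm{MJ}}$ of Theorem \ref{MIS_equi}, and then exploiting the strong openness property of Theorem \ref{property}.

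For $(1)$, since the conclusion concerns $X$ alone, I would first discard the weight. As $\varphi$ is plurisubharmonic it is locally bounded above, say $\varphi\le C$ near $x$; by the monotonicity of multiplier ideals and the irrelevance of additive constants, $\widehat{\mathscr{I}}(\varphi)\subseteq\widehat{\mathscr{I}}(C)=\widehat{\mathscr{I}}(0)\subseteq\mathcal{O}_X$. Hence log terminality $\widehat{\mathscr{I}}(\varphi)_x=\mathcal{O}_{X,x}$ forces $\widehat{\mathscr{I}}(0)_x=\mathcal{O}_{X,x}$. Applying Theorem \ref{MIS_equi} with $\mathfrak{a}=\mathcal{O}_X$ (so that $\varphi_{0\cdot\mathfrak{a}}\equiv0$) gives $\widehat{\mathscr{I}}(0)=\mathscr{I}_{\mathrm{MJ}}(\mathcal{O}_X)$, so $X$ is Mather--Jacobian log terminal at $x$. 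I would then invoke the characterization of rational singularities through Mather--Jacobian discrepancies (de Fernex--Docampo; cf. \cite{deFD14, EIM16}), by which a Mather--Jacobian log terminal point is a rational singularity (in particular normal).

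For the reducedness in $(2)$ I would give a direct $L^2$ argument, valid for arbitrary psh $\varphi$, that $\widehat{\mathscr{I}}(\varphi)$ is radical. Recall that a germ $f$ lies in $\widehat{\mathscr{I}}(\varphi)$ exactly when $|f|^2e^{-2\varphi}$ is integrable near $x$ with respect to Ohsawa's measure $d\mu$. Let $g$ satisfy $g^p\in\widehat{\mathscr{I}}(\varphi)$ for some integer $p\ge2$ (the case $p=1$ being trivial). By the strong openness property of Theorem \ref{property} there is $\eta>0$ with $|g|^{2p}e^{-2(1+\eta)\varphi}\in L^1(d\mu)$ near $x$, while log canonicity gives $e^{-2c\varphi}\in L^1(d\mu)$ near $x$ for every $c<1$. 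Writing $|g|^2e^{-2\varphi}=\bigl(|g|^{2p}e^{-2(1+\eta)\varphi}\bigr)^{1/p}\cdot e^{-2c\varphi}$ with $c=1-\tfrac{1+\eta}{p}<1$ and applying H\"older's inequality with exponents $p$ and $\tfrac{p}{p-1}$, the second factor contributes $\int e^{-2(1-\frac{\eta}{p-1})\varphi}\,d\mu$, whose exponent is again $<1$; both factors are therefore finite, so $|g|^2e^{-2\varphi}\in L^1(d\mu)$ and $g\in\widehat{\mathscr{I}}(\varphi)$. Thus $\widehat{\mathscr{I}}(\varphi)=\sqrt{\widehat{\mathscr{I}}(\varphi)}$ and $A$ carries a reduced structure near $x$. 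The essential point is that strong openness supplies the extra room $\eta>0$ without which, sitting at the log canonical threshold, the H\"older estimate would collapse.

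For the weak normality under the additional hypothesis of analytic singularities, I would reduce via Theorem \ref{MIS_equi} to $\varphi=\varphi_{c\cdot\mathfrak{a}}$ and pass to a log resolution $\pi\colon\widetilde X\to X$ computing $\mathscr{I}_{\mathrm{MJ}}(\mathfrak{a}^c)$, on which $\widehat{\mathscr{I}}(\varphi)$ is cut out by order conditions $\operatorname{ord}_{E_i}(\pi^\ast f)\ge m_i$ along the relevant divisors $E_i$. Log canonicity means precisely $m_i\le1$ for all $i$, so the radical ideal $\widehat{\mathscr{I}}(\varphi)$ found above is exactly the reduced ideal of the union of the Mather--Jacobian log canonical centers $\pi(E_i)$ with $m_i=1$. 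It then remains to prove that this union is weakly normal at $x$: I would verify the weak normality criterion that every continuous function on $A$ which is holomorphic on $A_{\mathrm{reg}}$ already lies in $\mathcal{O}_{A,x}$, lifting such a function through $\pi$ and using that $\operatorname{ord}_{E_i}$ is integer valued together with $m_i\le1$ to force the descent; alternatively one invokes the seminormality of log canonical centers furnished by the theory of Mather--Jacobian singularities (\cite{deFD14, EIM16}). I expect this last step to be the principal obstacle, since deciding, through the singular geometry of $\pi$, which weakly holomorphic functions on $A$ extend holomorphically is precisely what separates weak normality from mere reducedness; the valuative description and the bound $m_i\le1$ are the levers I would use to obtain the required descent.
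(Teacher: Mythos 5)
Your H\"older argument for the reducedness in $(2)$ is correct and is a genuinely different, more elementary route than the paper's: the factorization $|g|^{2}e^{-2\varphi}=\bigl(|g|^{2p}e^{-2(1+\eta)\varphi}\bigr)^{1/p}\cdot e^{-2c\varphi}$ with $c=1-\tfrac{1+\eta}{p}$, together with the conjugate exponent computation $c\cdot\tfrac{p}{p-1}=1-\tfrac{\eta}{p-1}<1$, does show that $\widehat{\mathscr{I}}(\varphi)_x$ is radical, and reducedness at $x$ propagates to a neighbourhood by coherence of the nilradical. This is a clean purely $L^2$ proof, where the paper instead extracts reducedness from strong openness combined with the local vanishing theorem stated just before Theorem \ref{SingPairs}. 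For part $(1)$ your reduction to the trivial weight by monotonicity is fine, but the subsequent step is where you and the paper part ways: you cite the de Fernex--Docampo implication ``MJ-log terminal $\Rightarrow$ rational'' from \cite{deFD14, EIM16}, whereas the paper runs an induction on $\dim X$ using a Bertini property for log terminal singularities (Lemma 4.1 of \cite{Greb11}) and the local vanishing theorem. The reason the paper does not simply quote \cite{deFD14} is that $X$ here is an arbitrary complex space of pure dimension, not an algebraic variety, so the algebraic theorem does not literally apply; the induction is precisely the re-proof of that implication in the analytic category. As written, your part $(1)$ carries this transfer gap, although the skeleton of the argument is the intended one.

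The genuine gap is in the weak normality claim, and it sits exactly where you anticipated trouble. After passing to a log resolution $\pi$ and identifying $A$ with the image of the reduced SNC divisor $E=\sum_{m_i=1}E_i$ of log canonical places, the descent of a weakly holomorphic function $h$ on $A$ has two steps: first, $h\circ\pi|_{E}$ is holomorphic because a reduced SNC divisor is weakly normal (this uses only $m_i\le 1$ and is fine); second, one must conclude that $h$ itself lies in $\mathcal{O}_{A,x}$, and for this one needs the natural map $\mathcal{O}_X/\widehat{\mathscr{I}}(\varphi)\to\pi_*\mathcal{O}_{E}$ to be an isomorphism --- in particular surjective. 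Integrality of $\operatorname{ord}_{E_i}$ and the bound $m_i\le1$ give you nothing here: surjectivity is a cohomological statement, and it is exactly what the local vanishing theorem recorded before Theorem \ref{SingPairs} (the vanishing of $R^q\pi_*$ of $\mathcal{O}_{\widetilde X}(\widehat K_{\widetilde X/X}-J_{\widetilde X/X})$ twisted by a multiplier ideal, deduced from Corollary 1.5 of \cite{Matsu_morphism} and the argument of Theorem 3.5 in \cite{EIM16}) is imported to supply, via the long exact sequence of direct images applied to a suitable restriction sequence on $\widetilde X$. Your fallback of invoking seminormality of log canonical centres (Ambro, \cite{Ambro11}) suffers from the same algebraic-versus-analytic transfer problem as part $(1)$, and moreover the statement actually needed is seminormality of the entire non-klt locus rather than of individual centres. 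Without the local vanishing input the weak normality step does not close.
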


When $X$ is locally a complete intersection, analogous to the characterization of rationality of hypersurface singularities in \cite{Li_adjoint}, we can establish the following

\begin{theorem} \label{rationality}
Let $X\subset M$ be locally a complete intersection and $\varphi\in\emph{Psh}(M)$ such that the slope $\nu_x(\varphi|_X)=0$ for every $x\in X$. Then, the analytic adjoint ideal sheaf $Adj_X(\varphi)=\mathcal{O}_M$ iff $\widehat{\mathscr{I}}(\varphi|_X)=\mathcal{O}_X$ iff $X$ is normal and has only rational singularities iff $(X,x)$ is canonical for any $x\in X$.
\end{theorem}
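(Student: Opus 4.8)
The plan is to prove the theorem as a cyclic chain of implications, treating the equivalence $Adj_X(\varphi)=\mathcal{O}_M\Leftrightarrow\widehat{\mathscr{I}}(\varphi|_X)=\mathcal{O}_X$ via the adjunction exact sequence $(\star)$ of Theorem~\ref{Aadjoint}, and then running the geometric loop among the last three conditions using Theorem~\ref{SingPairs}, Theorem~\ref{MIS_equi}, and the algebraic theory of Mather--Jacobian multiplier ideals. For the first equivalence, the forward direction is immediate: since $\rho$ is surjective and the restriction $\mathcal{O}_M\to i_*\mathcal{O}_X$ is onto, $Adj_X(\varphi)=\mathcal{O}_M$ forces $i_*\widehat{\mathscr{I}}(\varphi|_X)=i_*\mathcal{O}_X$. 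For the converse, surjectivity of $\rho$ produces near each $x$ a germ $s\in Adj_X(\varphi)$ with $s\equiv 1\pmod{\mathscr{I}_X}$, so it suffices to prove the inclusion $\mathscr{I}_X\subset\mathscr{I}(\varphi+r\log|\mathscr{I}_X|)$: then $1-s$ lies in the left-hand term of $(\star)$, whence $1=s+(1-s)\in Adj_X(\varphi)$ and $Adj_X(\varphi)=\mathcal{O}_M$.

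This inclusion is the analytic core of the first step. Writing $g=(g_1,\dots,g_r)$ for a regular sequence generating $\mathscr{I}_X$ (here $m=r$, as $X$ is a complete intersection of codimension $r$), each generator satisfies $|g_j|^2/|g|^{2r}\le|g|^{-2(r-1)}$, so it is enough to verify that $|g|^{-2(r-1)}e^{-2\varphi}$ is locally integrable on $M$. Fibring $M$ over $\mathbb{C}^r$ by the map $g$, whose generic fibre meets $X$ transversally, the transverse factor contributes the finite integral $\int_{\mathbb{C}^r}|t|^{-2(r-1)}\,dV(t)<\infty$, while the hypothesis $\nu_x(\varphi|_X)=0$ guarantees, through Skoda's integrability theorem, that $e^{-2\varphi|_X}$ is integrable in the fibre direction; a Fubini--coarea argument then assembles these into the desired local integrability. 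I expect this fibred estimate, and the precise way the slope condition enters it, to be one of the two main technical obstacles.

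Next I would run the geometric implications. The triviality $\widehat{\mathscr{I}}(\varphi|_X)=\mathcal{O}_X$ says exactly that the pair $(X,\varphi|_X)$ is log terminal at every point, so Theorem~\ref{SingPairs}(1) yields that each $x\in X$ is a rational singularity; since rational singularities are normal, this gives that $X$ is normal with only rational singularities. To pass from here to canonicity I would use that $X$, being locally a complete intersection, is Gorenstein, so that by Elkik's theorem normal rational singularities coincide with canonical ones; this settles the equivalence of the two geometric conditions and shows that triviality of $\widehat{\mathscr{I}}(\varphi|_X)$ implies $X$ is canonical.

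Finally, to close the cycle I would prove that $X$ canonical implies $\widehat{\mathscr{I}}(\varphi|_X)=\mathcal{O}_X$. Here the slope condition $\nu_x(\varphi|_X)=0$ together with the strong openness of Theorem~\ref{property} should reduce the weight to the trivial one, identifying $\widehat{\mathscr{I}}(\varphi|_X)$ with the trivial-coefficient Mather--Jacobian multiplier ideal $\mathscr{I}_{\mathrm{MJ}}(X)$ of Theorem~\ref{MIS_equi}; concretely, Skoda's theorem and H\"older's inequality convert $\nu_x(\varphi|_X)=0$ into the inclusion $\mathscr{I}_{\mathrm{MJ}}(X)\subseteq\widehat{\mathscr{I}}(\varphi|_X)$. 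Since $X$ is lci and canonical, it is Mather--Jacobian canonical, so $\mathscr{I}_{\mathrm{MJ}}(X)=\mathcal{O}_X$ by the results of de~Fernex--Docampo and Ein--Ishii--Mustat\u{a}, whence $\widehat{\mathscr{I}}(\varphi|_X)=\mathcal{O}_X$. The main obstacle, besides the fibred estimate of the first step, is precisely this pair of analytic comparisons in which the slope-zero hypothesis must be turned into integrability statements; once these are in hand, the geometric equivalences and the exact sequence $(\star)$ assemble the four conditions into a single equivalence.
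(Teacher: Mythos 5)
The paper itself does not write out a proof of this theorem --- it only says the argument is ``analogous to the hypersurface case'' of \cite{Li_adjoint} --- so the comparison is against the machinery the paper sets up. Your geometric loop among the last three conditions is consistent with that machinery: Theorem \ref{SingPairs}(1) gives rationality from log terminality of $(X,\varphi|_X)$, the Gorenstein property of an lci plus Elkik--Kempf identifies rational with canonical, and the identity $\widehat{\mathscr{I}}(\varphi|_X)=\pi_*\bigl(\mathcal{O}(K_{\widetilde\Omega/\Omega}-rR_X)|_{\widetilde X}\otimes\mathscr{I}(\varphi\circ\pi|_{\widetilde X})\bigr)$ together with the lci adjunction $(K_{\widetilde\Omega/\Omega}-rR_X)|_{\widetilde X}=K_{\widetilde X/X}$ (equivalently, Theorem \ref{MIS_equi} and the fact that Mather--Jacobian discrepancies agree with ordinary ones for lci) reduces the converse to nonnegativity of discrepancies plus a Skoda/H\"older argument absorbing the zero-slope weight. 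That part of your plan is sound, modulo the routine point that $\nu_x(\varphi|_X)=0$ forces $\nu_x(\varphi)=0$, so $e^{-2p\varphi}\in L^1_{\mathrm{loc}}$ near $X$ for every $p$.

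The genuine gap is in your treatment of the first equivalence. You reduce the implication $\widehat{\mathscr{I}}(\varphi|_X)=\mathcal{O}_X\Rightarrow Adj_X(\varphi)=\mathcal{O}_M$ to the inclusion $\mathscr{I}_X\subset\mathscr{I}(\varphi+r\log|\mathscr{I}_X|)$, i.e.\ to local integrability of $e^{-2\varphi}|g|^{-2(r-1)}$; after H\"older this is the assertion that $\mathrm{lct}(\mathscr{I}_X)>r-1$ for \emph{every} reduced lci of codimension $r$. That is a nontrivial theorem in its own right, and your Fubini--coarea justification does not establish it: the fibre integrals $\int_{g^{-1}(t)}dV/|\Lambda^r dg|^2$ are \emph{not} uniformly bounded as $t\to0$ when $X=g^{-1}(0)$ is singular (already for $g=z_1z_2$ in $\mathbb{C}^2$ they blow up like $\log(1/|t|)$), so the product of ``transverse factor'' and ``fibre factor'' does not close the estimate as written. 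Moreover the whole step is unnecessary: Theorem \ref{restriction} gives $\widehat{\mathscr{I}}(\varphi|_X)=Adj_X(\varphi)\cdot\mathcal{O}_X$, so $\widehat{\mathscr{I}}(\varphi|_X)=\mathcal{O}_X$ means $Adj_X(\varphi)_x+\mathscr{I}_{X,x}=\mathcal{O}_{M,x}$ for $x\in X$; writing $1=a+h$ with $h\in\mathscr{I}_{X,x}\subset\mathfrak{m}_x$ shows $a$ is a unit lying in $Adj_X(\varphi)_x$, and Nakayama (or just this unit) gives $Adj_X(\varphi)_x=\mathcal{O}_{M,x}$ with no integral estimate at all. (Both your argument and the statement itself should be read locally along $X$, since at points off $X$ one only has $Adj_X(\varphi)_x=\mathscr{I}(\varphi)_x$.) You should replace the fibred estimate by this short argument, or else supply a genuine proof that $\mathrm{lct}(\mathscr{I}_X)>r-1$ for reduced lci subspaces.
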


\begin{remark}
Note that our multiplier (resp. adjoint) ideals measure both the singularities of the plurisubharmonic weights and associated complex spaces (resp. subspaces) together. The above result also implies that if a locally complete intersection has at most rational singularities, the plurisubharmonic weights with zero slope do not add the singularities in the sense of multiplier or adjoint ideals.
\end{remark}

\begin{remark}
If $M$ is a smooth complex algebraic variety and $\varphi$ is trivial, the above result coincides with Proposition 9.3.48 (ii) in \cite{La04}, which turned out to be very useful to study the singularities of theta divisors on principally polarized abelian varieties.
\end{remark}

\subsection{Singularities of log canonical locus of plurisubharmonic functions}

Let $\Omega\subset\mathbb{C}^n$ be a domain with $o\in\Omega$ the origin and $u\in\text{Psh}(\Omega)$ a plurisubhamonic function on $\Omega$. The subscheme (or complex subspace) $V(u):=(A,(\mathcal{O}_\Omega/\mathscr{I}(u))|_A)$ of $\Omega$ cut out by the multiplier ideal sheaf $\mathscr{I}(u)$ is called a \emph{multiplier ideal subscheme} associated to $u$, where $A:=N(\mathscr{I}(u))$ denotes the zero-set of coherent ideal sheaf $\mathscr{I}(u)$; see e.g. \cite{De10, DK01, Nadel_behavior}.

The \emph{log canonical threshold} (or \emph{complex singularity exponent}) $c_{o}(u)$ or $\text{LCT}_o(u)$ of $u$ at $o$ is defined to be $$c_{o}(u):=\sup\{c\ge0\,|\,\exp(-2cu)\mbox{ is integrable near}\ o \}.$$
It is convenient to put $c_{o}(-\infty)=0$. If $c_{o}(u)=1$, we say that $(A,o)$ is the germ of \emph{log canonical locus} of $u$ at $o$.

Motivated by (2) of Theorem \ref{SingPairs0}, it is natural to raise the following question on the weak normality of log canonical locus of plurisubharmonic functions.

\begin{question} \label{Q_WNormal}
Let $o\in\Omega\subset\mathbb{C}^{n}$ be the origin, $u\in\text{\emph{Psh}}(\Omega)$ with $c_o(u)=1$. It is natural to wonder whether we are able to show that the multiplier ideal subscheme $V(u)$ is weakly normal at $o$.
\end{question}

In \cite{G-L_LCT16}, we answered the above Question affirmatively in dimension two. Owing to the absence of desingularization theorem for plurisubharmonic functions, we cannot deal with the above question like the algebraic situation, and so it is reasonable to find another plurisubharmonic weight with mild singularities whose multiplier ideal cuts out the same subscheme as $V(u)$. Thanks to Demailly's analytic approximation of plurisubharmonic functions via Bergman kernels, we can establish the following

\begin{theorem} \emph{(Theorem \ref{SiuPsh}).}
Let $\frak{a}_{\bullet}=\{\frak{a}_m\}$ be the graded system of ideal sheaves on $\Omega$ given by $m_\Lambda$-th symbolic powers $\frak{a}_m=\mathscr{I}_A^{<m_\Lambda>}$, where $m_k=e_km$ for each $k$ and $e_k$ is the codimension of $A_k$ in $\Omega$. Then, it follows that
$$\emph{LCT}_o(\frak{a}_{\bullet})=1\ \text{and}\ \mathscr{I}(\frak{a}_{\bullet})_o=\mathscr{I}(u)_o.$$
\end{theorem}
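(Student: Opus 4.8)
The plan is to compare two graded systems of ideals at $o$ and show they carry the same asymptotic invariants: the symbolic one $\mathfrak{a}_\bullet$ and the analytic one $\mathfrak{b}_\bullet=\{\mathscr{I}(mu)\}_m$ built from the multiplier ideals of $u$. First I would set up the graded-system formalism: writing $\varphi_{\mathfrak{c}_\bullet}:=\lim_m\frac1m\log|\mathfrak{c}_m|$ for a graded system $\mathfrak{c}_\bullet$, one has $\mathscr{I}(\mathfrak{c}_\bullet)=\mathscr{I}(\varphi_{\mathfrak{c}_\bullet})$ and $\mathrm{LCT}_o(\mathfrak{c}_\bullet)=c_o(\varphi_{\mathfrak{c}_\bullet})$, both governed by the asymptotic orders $\mathrm{ord}_E(\mathfrak{c}_\bullet)=\lim_m\frac1m\mathrm{ord}_E(\mathfrak{c}_m)$ along divisorial valuations $E$ over $\Omega$, through the valuative formula $\mathrm{LCT}_o(\mathfrak{c}_\bullet)=\inf_E\frac{a_E+1}{\mathrm{ord}_E(\mathfrak{c}_\bullet)}$ and the membership condition $\mathrm{ord}_E(f)\ge\mathrm{ord}_E(\mathfrak{c}_\bullet)-a_E$. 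Applying Demailly's Bergman-kernel approximation of $u$ together with the strong openness property (Theorem \ref{property}) identifies $\mathfrak{b}_\bullet$ with $u$, i.e. $\mathscr{I}(\mathfrak{b}_\bullet)_o=\mathscr{I}(u)_o$ and $\mathrm{LCT}_o(\mathfrak{b}_\bullet)=c_o(u)=1$. This reduces the theorem to comparing $\mathfrak{a}_\bullet$ and $\mathfrak{b}_\bullet$ on the valuations relevant at $o$.

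For the first, easy half I would analyze the generic behaviour of $u$ along each irreducible component $A_k\subset A$ of codimension $e_k$. Since $c_o(u)=1$ the pair is log canonical, so $A_k$ is a component of the non-klt locus along which $u$ is generically log canonical; restricting to a transverse slice and invoking Skoda's estimate shows that the generic vanishing order imposed by $\mathscr{I}(mu)$ along $A_k$ equals $e_k(m-1)+1$. Because symbolic powers are detected by generic vanishing orders, this yields the global containment $\mathscr{I}(mu)\subseteq\mathfrak{a}_{m-1}$, whence $\mathrm{ord}_E(\mathfrak{b}_\bullet)\ge\mathrm{ord}_E(\mathfrak{a}_\bullet)$ for every $E$. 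Consequently $\mathrm{LCT}_o(\mathfrak{a}_\bullet)\ge\mathrm{LCT}_o(\mathfrak{b}_\bullet)=1$ and $\mathscr{I}(u)_o\subseteq\mathscr{I}(\mathfrak{a}_\bullet)_o$. The reverse LCT bound is immediate from the construction: blowing up a generic point of $A_k$ produces a divisorial valuation $E_k$ with $a_{E_k}+1=e_k$ and $\mathrm{ord}_{E_k}(\mathfrak{a}_\bullet)=e_k$, so it already realizes the value $\tfrac{e_k}{e_k}=1$; hence $\mathrm{LCT}_o(\mathfrak{a}_\bullet)\le1$, and the two bounds give $\mathrm{LCT}_o(\mathfrak{a}_\bullet)=1$. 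This is exactly where the codimension weighting $m_k=e_km$ is used: it is calibrated so that each component contributes threshold $1$.

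It remains to prove the reverse containment $\mathscr{I}(\mathfrak{a}_\bullet)_o\subseteq\mathscr{I}(u)_o$, and this I expect to be the main obstacle. Equivalently, I must show $\mathrm{ord}_E(\mathfrak{a}_\bullet)\ge\mathrm{ord}_E(\mathfrak{b}_\bullet)$ on every valuation $E$ relevant to the multiplier ideal at $o$; the two already agree on the valuations $E_k$ along the components (both give $e_k$), so the difficulty is concentrated on valuations centered at the crossing point $o$, where the branches of $A$ interact and where $u$ could a priori be strictly more singular than the weighted symbolic weight $\varphi_{\mathfrak{a}_\bullet}$. My strategy is to establish a uniform, analytic Ein--Lazarsfeld--Smith--type symbolic-power containment $\mathscr{I}_A^{<m_\Lambda>}\subseteq\mathscr{I}\big((m+c)u\big)$ near $o$ for a constant $c$ independent of $m$: prescribing vanishing order $e_km$ along every $A_k$ should force $L^2$ integrability against $e^{-2(m+c)u}$ even at $o$. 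I would prove this by restricting successively to the components via the Ohsawa-Takegoshi $L^2$ extension theorem and the subadditivity of multiplier ideals, using crucially that $c_o(u)=1$ bounds the singularity of $u$ at the crossing, so that no divisorial valuation $E$ at $o$ can have $\frac{a_E+1}{\mathrm{ord}_E(u)}<1$ and no extra relevant valuation arises. Passing to asymptotic orders then yields $\mathrm{ord}_E(\mathfrak{a}_\bullet)\ge\mathrm{ord}_E(\mathfrak{b}_\bullet)$, hence equality of the asymptotic orders and therefore $\mathscr{I}(\mathfrak{a}_\bullet)_o=\mathscr{I}(\mathfrak{b}_\bullet)_o=\mathscr{I}(u)_o$.

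The genuinely hard point is thus the behaviour at the crossing: the generic comparison along each $A_k$ is routine, but controlling the interaction of the branches requires the full force of the log-canonical hypothesis $c_o(u)=1$, and it is Demailly's Bergman approximation that makes it possible to transfer between the transcendental weight $u$ and the algebraic symbolic powers $\mathscr{I}_A^{<m_\Lambda>}$ at all. I would also verify the edge cases where the multiplier ideal subscheme of $u$ is non-reduced or some $A_k$ is singular, since there the identification $\mathrm{ord}_{E_k}(\mathfrak{b}_\bullet)=e_k$ and the transverse-slice computation rely on the reducedness furnished by Theorem \ref{SingPairs}.
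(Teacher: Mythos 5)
Your strategy cannot be carried out as written: several of the containments it rests on are false, and the valuative equality it aims at is strictly stronger than the theorem and fails. All of this is visible in a single example satisfying the hypotheses: $u=\log\big(|z_1|^{3/2}+|z_2|^{3}\big)$ near $o\in\mathbb{C}^{n}$, $n\ge 2$. Here $c_o(u)=\tfrac{2}{3}+\tfrac{1}{3}=1$, $\mathscr{I}(u)=(z_1,z_2)=\mathscr{I}_A$ with $A=\{z_1=z_2=0\}$ of codimension $e=2$, $\mathfrak{a}_m=(z_1,z_2)^{2m}$, and the conclusion of the theorem holds. But $\mathscr{I}(mu)$ is the monomial ideal generated by the $z_1^{i}z_2^{j}$ with $2i+j\ge 3m-2$, so: (i) its generic vanishing order along $A$ is $\lceil(3m-2)/2\rceil\sim\tfrac{3m}{2}$, not $e(m-1)+1=2m-1$; your Skoda step tacitly assumes that the generic Lelong number of $u$ on a transverse slice equals $e_k$, whereas $c_x(u|_H)\le 1$ only yields $1\le\nu\le e_k$ (here $\nu=3/2$), and consequently $\mathscr{I}(mu)\not\subset\mathfrak{a}_{m-1}$ for $m\ge 3$. (ii) Hence $\mathrm{ord}_{E}\big(\{\mathscr{I}(mu)\}\big)=3/2\ne 2=\mathrm{ord}_{E}(\mathfrak{a}_\bullet)$ for $E$ the divisor over the generic point of $A$: the two systems have genuinely different asymptotic orders already on the divisors $E_k$, even though their multiplier ideals agree, so the ``equality of asymptotic orders'' you want to prove is simply false. (iii) The proposed Ein--Lazarsfeld--Smith-type containment $\mathfrak{a}_m\subset\mathscr{I}((m+c)u)$ fails for $z_2^{2m}$ as soon as $2m<3(m+c)-2$. (iv) Most damagingly, $\mathscr{I}\big(\tfrac1m\mathscr{I}(mu)\big)_o=\mathcal{O}_o$ for \emph{every} $m$ (since $\tfrac1m\log|\mathscr{I}(mu)|\ge\log(|z_1|^{3/2}+|z_2|^{3-2/m})+O(1)$, whose complex singularity exponent is $\tfrac{2}{3}+\tfrac{1}{3-2/m}>1$), so $\mathscr{I}(\mathfrak{b}_\bullet)_o=\mathcal{O}_o\ne\mathscr{I}(u)_o$. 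Demailly's approximation gives $\mathscr{I}(u)\subset\mathscr{I}(u_m)$, but the reverse inclusion fails exactly at a jumping number, and $c_o(u)=1$ forces $1$ to be a jumping number at $o$ (by openness $e^{-2u}\notin L^1$ at $o$); the strong openness of Theorem \ref{property} controls $\mathscr{I}((1+\varepsilon)u)$, not $\mathscr{I}((1-\varepsilon)u)$, and cannot close this gap. (In addition, $\{\mathscr{I}(mu)\}_m$ need not even be a graded system: in this example $\mathscr{I}(iu)\cdot\mathscr{I}(ju)\not\subset\mathscr{I}((i+j)u)$.)

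The theorem survives in the example because it is an assertion about multiplier ideals, which do not see the valuative discrepancy between $\mathfrak{a}_\bullet$ and $\{\mathscr{I}(mu)\}$; any proof must therefore work at the level of integrability rather than of asymptotic orders. Two things your proposal does not use: first, by $(2)$ of Theorem \ref{SingPairs} the hypothesis $c_o(u)=1$ forces $\mathcal{O}_\Omega/\mathscr{I}(u)$ to be reduced, i.e.\ $\mathscr{I}(u)_o=\mathscr{I}_{A,o}$, which identifies the right-hand side and reduces the theorem to $\mathscr{I}(\mathfrak{a}_\bullet)_o=\mathscr{I}_{A,o}$ together with $\mathrm{LCT}_o(\mathfrak{a}_\bullet)=1$. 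Of these, the inclusion $\mathscr{I}(\mathfrak{a}_\bullet)\subset\mathscr{I}_A$ and the bound $\mathrm{LCT}_o(\mathfrak{a}_\bullet)\le 1$ are the routine Ein--Lazarsfeld--Smith-type steps (they use only $\mathrm{ord}_{A_k}(\mathfrak{a}_\bullet)=e_k=\mathrm{codim}\,A_k$ and the divisors $E_k$, essentially as in your second paragraph); the substance is the reverse inclusion and the lower bound $\mathrm{LCT}_o(\mathfrak{a}_\bullet)\ge 1$, which encode a restriction on the singularities of $A$ that must be extracted from the existence of $u$ --- but, as neither $\mathfrak{a}_m\subset\mathscr{I}(m'u)$ nor $\mathscr{I}(mu)\subset\mathfrak{a}_{m'}$ holds with any useful calibration of $m'$, this transfer cannot be effected by comparing the two graded systems term by term or valuation by valuation. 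The note only names its ingredients (Demailly approximation and asymptotic multiplier ideals) without printing the argument, so a line-by-line match is impossible, but the example above shows that your reduction is not a viable route to it.
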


As an application of above result, we can confirm Question \ref{Q_WNormal} in dimension three, i.e.,

\begin{theorem} \label{3D}
Let $\Omega\subset\mathbb{C}^{3}$ be a domain containing the origin $o$ and $u\in\text{\emph{Psh}}(\Omega)$ with $c_o(u)=1$. Then, the multiplier ideal subscheme $V(u)$ is weakly normal at $o$.
\end{theorem}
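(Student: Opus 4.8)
The plan is to reduce the arbitrary plurisubharmonic weight to one with essentially algebraic singularities that cuts out the same germ, and then to analyse that germ stratum by stratum. Since $\Omega\subset\mathbb{C}^3$ is smooth, the hypothesis $c_o(u)=1$ says exactly that the pair $(\Omega,u)$ is log canonical at $o$, so Theorem \ref{SingPairs0} already guarantees that $V(u)=(A,(\mathcal{O}_\Omega/\mathscr{I}(u))|_A)$ is reduced near $o$; the whole content of the statement is to upgrade reducedness to weak normality. First I would invoke Theorem \ref{SiuPsh}: writing $A=\bigcup_k A_k$ for the irreducible decomposition with $e_k=\operatorname{codim}_\Omega A_k$, the graded system $\mathfrak{a}_\bullet$ of symbolic powers of $\mathscr{I}_A$ there constructed satisfies $\text{LCT}_o(\mathfrak{a}_\bullet)=1$ and $\mathscr{I}(\mathfrak{a}_\bullet)_o=\mathscr{I}(u)_o$. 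Hence the germ $(V(u),o)$ coincides with $(V(\mathfrak{a}_\bullet),o)$, and it suffices to prove that the latter, defined by symbolic powers of the reduced ideal $\mathscr{I}_A$, is weakly normal at $o$.

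Weak normality being a local analytic property, I would next stratify $A$ near $o$ by dimension. In $\mathbb{C}^3$ the components $A_k$ have codimension $e_k\in\{1,2,3\}$, i.e. they are surfaces, curves, or the point $o$ itself, and I would treat their contributions separately. For a divisorial component ($e_k=1$) the symbolic power equals the ordinary power, the corresponding weight is the analytic-singularity weight $\log|\mathscr{I}_{A_k}|$, and the reduced hypersurface is cut out by $\mathscr{I}_{A_k}$; here the \emph{analytic singularities} clause of Theorem \ref{SingPairs0} applies directly and delivers weak normality of the top-dimensional part. Along the smooth locus of a curve component ($e_k=2$) the germ is, up to a smooth factor, a transverse slice in $\mathbb{C}^2$; by the product structure the problem reduces to that slice, where the weight is again of analytic type and the two-dimensional result of \cite{G-L_LCT16} applies. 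Thus the only genuinely three-dimensional phenomena are concentrated on the lower strata.

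The hard part, and where three-dimensionality is essential, is the behaviour at the singular points of the curve components, at the points where components of different dimension meet, and at the origin. Two things must be controlled there. First, I must show that the log canonical condition $\text{LCT}_o(\mathfrak{a}_\bullet)=1$, together with the symbolic-power structure of $\mathfrak{a}_\bullet$, forbids the non-seminormal curve germs (cusps, tacnodes, coplanar concurrent lines, and the like): concretely, every reduced branch through such a point must be smooth and the branches must have linearly independent tangent directions, which is precisely the seminormality of a space curve as a union of coordinate axes. This is the step I expect to be the main obstacle, since it demands a fairly precise hold on the multiplier ideal of symbolic powers of a \emph{singular} space curve, for which the absence of a desingularization theorem for plurisubharmonic functions is exactly the difficulty Theorem \ref{SiuPsh} is designed to circumvent. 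Second, having constrained the local branches, I would verify the defining gluing condition directly: any continuous function on $(V(\mathfrak{a}_\bullet),o)$ that is holomorphic on the regular locus extends holomorphically across the strata, because the surviving local models (normal crossing surfaces, coordinate-axis curves, and their transverse unions) are seminormal and their seminormalizations are already attained inside $\mathcal{O}_{V(\mathfrak{a}_\bullet)}$.

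Finally, assembling the strata gives that $V(\mathfrak{a}_\bullet)=V(u)$ is weakly normal at $o$, which is the assertion of Theorem \ref{3D} and answers Question \ref{Q_WNormal} in dimension three. The finite list of local models produced along the way would then be recorded as the promised complete classification of three-dimensional multiplier ideal subschemes with log canonical singularities, with the two-dimensional theorem of \cite{G-L_LCT16} serving simultaneously as the base case for transverse slices and as the template for the curve analysis.
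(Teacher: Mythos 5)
Your reduction via Theorem \ref{SiuPsh} to the graded system $\mathfrak{a}_\bullet$ of symbolic powers is exactly the paper's first step, but from there your argument diverges and, at the decisive point, stops. The paper does not stratify $A$ and analyse local models directly; instead it proves that $\mathfrak{a}_\bullet$ is \emph{finitely generated} near $o$ (Proposition \ref{Xu}, due to Chenyang Xu, which uses $\mathrm{LCT}(\mathfrak{a}_\bullet)=1$ together with Corollary 1.4.3 of \cite{BCHM} to produce a model whose exceptional locus is precisely the divisors over the components $A_k$, after algebraizing the germ via Artin's theorem). Finite generation feeds into Lemma \ref{Siu_finite}: the Siu weight $\varphi_{\mathfrak{a}_\bullet}$ then agrees up to $O(1)$ with the finite truncation $\varphi_{\mathfrak{a}_\bullet,m_0}$, which \emph{has analytic singularities}, so the analytic-singularities clause of Theorem \ref{SingPairs}(2) applies and yields weak normality in one stroke. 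Your proposal never establishes that the weight you end up with has analytic singularities --- $\varphi_{\mathfrak{a}_\bullet}$ is an infinite series and does not a priori qualify --- so you cannot invoke that clause of Theorem \ref{SingPairs0} except on the divisorial components, where symbolic and ordinary powers coincide and the hypersurface germ is algebraic by the semi-log-canonical classification.

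The gap is concentrated exactly where you say you expect ``the main obstacle'': excluding non-seminormal curve germs and controlling the points where components of different dimensions meet. You assert that $\mathrm{LCT}_o(\mathfrak{a}_\bullet)=1$ together with the symbolic-power structure ``forbids'' cusps, tacnodes, and coplanar concurrent lines, but you give no mechanism for proving this; the transverse-slice reduction to \cite{G-L_LCT16} only covers the smooth locus of a curve component, as you concede. This is not a verification one can defer --- it is the entire content of the theorem beyond the reducedness already supplied by Theorem \ref{SingPairs}(2), and the paper's mechanism for it (finite generation via the minimal model program, hence analytic singularities, hence the weak-normality clause of Theorem \ref{SingPairs}(2)) is an essential input with no counterpart in your outline. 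As written, the proposal establishes the correct reduction and the easy strata but leaves the theorem unproved at the singular points of the one-dimensional part and at the mixed-dimensional intersections.
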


\begin{remark}
As shown in Corollary \ref{LCI} and Corollary \ref{isolated}, the answer of Question \ref{Q_WNormal} is positive if $(A,o)$ is a complete
intersection, or $o$ is an isolated singularity of $A$.
\end{remark}

In view of Theorem \ref{3D}, we can in fact establish a strengthening of the statement as below.

\begin{corollary} \label{union}
With the same hypotheses as in Theorem \ref{3D}, we can derive that any union of irreducible components of $V(u)$ with the reduced complex structure is weakly normal near $o$.
\end{corollary}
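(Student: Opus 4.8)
The plan is to reduce the statement to Theorem~\ref{3D} by exhibiting each union of components as the multiplier ideal subscheme of a suitable plurisubharmonic weight whose log canonical threshold at $o$ equals $1$. Write $A=N(\mathscr{I}(u))$ and let $A_1,\dots,A_s$ be the irreducible components of the germ $(A,o)$, with $e_k:=\operatorname{codim}_\Omega A_k$. By Theorem~\ref{3D} the subscheme $V(u)$ is weakly normal, hence reduced, at $o$, so that $\mathscr{I}(u)_o=\mathscr{I}_{A,o}=\bigcap_k\mathscr{I}_{A_k,o}$. Fix a subset $S\subseteq\{1,\dots,s\}$ and put $A_S:=\bigcup_{k\in S}A_k$ with its reduced structure. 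It suffices to produce $v\in\text{Psh}$ near $o$ with $c_o(v)=1$ and $\mathscr{I}(v)_o=\mathscr{I}_{A_S,o}$; granting this, $V(v)=A_S$ as reduced germs, and Theorem~\ref{3D} applied to $v$ yields the weak normality of $A_S$ at $o$.

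To build $v$ I would mimic the construction of Theorem~\ref{SiuPsh}, but retaining only the components indexed by $S$. Let $\mathfrak{a}^S_\bullet=\{\mathfrak{a}^S_m\}$ be the graded system of ideal sheaves defined by the codimension-weighted symbolic powers $\mathfrak{a}^S_m=\mathscr{I}_{A_S}^{<m_\Lambda>}=\bigcap_{k\in S}\mathscr{I}_{A_k}^{(e_k m)}$, and let $v:=\varphi_{\mathfrak{a}^S_\bullet}$ be the associated plurisubharmonic weight furnished by Demailly's analytic construction, exactly as the weight attached to $\mathfrak{a}_\bullet$ in Theorem~\ref{SiuPsh}. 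Since this weight depends only on the chosen components and their codimensions, the analysis carried out for $\mathfrak{a}_\bullet$ in Theorem~\ref{SiuPsh} should transfer to the sub-system $\mathfrak{a}^S_\bullet$ with no essential change.

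The crux, and the step I expect to be the main obstacle, is to verify the two identities $\text{LCT}_o(\mathfrak{a}^S_\bullet)=1$ and $\mathscr{I}(\mathfrak{a}^S_\bullet)_o=\mathscr{I}_{A_S,o}$ for the restricted system. The first requires checking that discarding the components outside $S$ neither lets the threshold drop below $1$ (which would enlarge the resulting subscheme) nor push it above $1$; this is precisely where the codimension weighting $m_k=e_k m$ is needed, and it should follow from the same Ohsawa--Takegoshi estimates and valuative computations underlying Theorem~\ref{SiuPsh}, now read off one component at a time. The second identity amounts to showing that the multiplier ideal of the codimension-weighted symbolic-power graded system of the reduced germ $A_S$ recovers exactly its radical $\mathscr{I}_{A_S,o}$, which is again the content of Theorem~\ref{SiuPsh} specialized to $A_S$. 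Once these are in place, the weight $v$ satisfies $c_o(v)=1$ and $V(v)=A_S$ with reduced structure, so Theorem~\ref{3D} completes the proof; this realizes the analytic counterpart of the fact that an arbitrary union of log canonical centers is weakly normal.
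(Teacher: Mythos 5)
Your overall strategy is the same as the paper's: realize the chosen union $Z=\bigcup_{k\in S}A_k$ as the multiplier ideal subscheme of an auxiliary plurisubharmonic weight with log canonical threshold one at $o$, and then quote Theorem~\ref{3D}. The difference is the choice of weight, and that is where your argument has a genuine gap. You take $v=\varphi_{\mathfrak{a}^S_\bullet}$ for the restricted symbolic-power system and justify the two identities $\mathrm{LCT}_o(\mathfrak{a}^S_\bullet)=1$ and $\mathscr{I}(\mathfrak{a}^S_\bullet)_o=\mathscr{I}_{A_S,o}$ by ``Theorem~\ref{SiuPsh} specialized to $A_S$.'' But Theorem~\ref{SiuPsh} takes as \emph{input} a plurisubharmonic function $u$ with $c_o(u)=1$ whose multiplier ideal subscheme is the given set; no such function for $A_S$ is available a priori --- producing one is exactly what you are trying to do --- so the appeal is circular. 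Unwinding what does follow formally: $\mathfrak{a}^S_m\supseteq\mathfrak{a}_m$ gives $\mathrm{LCT}_o(\mathfrak{a}^S_\bullet)\geq 1$ and $\mathscr{I}(\mathfrak{a}^S_\bullet)_o\supseteq\mathscr{I}(\mathfrak{a}_\bullet)_o=\mathscr{I}_{A,o}$, and the valuative computation at the divisors $E_k$, $k\in S$, gives $\mathrm{LCT}_o(\mathfrak{a}^S_\bullet)\leq 1$ and $\mathscr{I}(\mathfrak{a}^S_\bullet)_o\subseteq\mathscr{I}_{A_S,o}$. What is left unproved is precisely the inclusion $\mathscr{I}_{A_S,o}\subseteq\mathscr{I}(\mathfrak{a}^S_\bullet)_o$, and this does not follow from having an ideal squeezed between $\mathscr{I}_{A,o}$ and $\mathscr{I}_{A_S,o}$ with support $A_S$ (already $(z_1z_2,z_1^2)$ sits strictly between $(z_1z_2)$ and $(z_1)$ with zero set $\{z_1=0\}$). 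Nor is it a general fact about reduced germs: for a cuspidal curve in $\mathbb{C}^3$ the codimension-weighted symbolic-power system does \emph{not} have threshold one with radical multiplier ideal, since otherwise Theorem~\ref{3D} would force the cusp to be weakly normal. So the identity genuinely uses that $A_S$ consists of components of the log canonical locus of $u$, i.e., some use of $u$ in the construction of the new weight is indispensable.

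The paper avoids this by not discarding $u$: it sets $\widetilde u:=\log\bigl(|\mathscr{I}_Z^{<m_\Lambda>}|+e^{u}\bigr)$ with $m_\Lambda=(e_{k_1},\dots,e_{k_t})$. The term $e^{u}$ immediately yields $\mathscr{I}(\widetilde u)\supseteq\mathscr{I}(u)=\mathscr{I}_A$ and carries the information needed (via Demailly's approximation and Remark~\ref{genericLelong}(2), which pins down the generic Lelong numbers of $u$ along the $A_k$) to push this up to $\mathscr{I}(\widetilde u)_o=\mathscr{I}_{Z,o}$, while the symbolic-power term makes $\widetilde u$ nonsingular along the discarded components and caps the threshold at one along each retained $A_k$. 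If you want to salvage your version, you must supply an independent proof of $\mathscr{I}_{A_S,o}\subseteq\mathscr{I}(\mathfrak{a}^S_\bullet)_o$ at the special points of $Z$ (the two sheaves do agree on $Z\setminus\bigcup_{j\notin S}A_j$, where $\mathfrak{a}^S_m$ and $\mathfrak{a}_m$ coincide, but extending the equality across the intersection locus is not automatic); reintroducing $u$ as the paper does is the cleanest way to close that gap.
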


\begin{remark}
By the same argument as in the proof of Corollary \ref{union}, we can obtain that if the answer of Question \ref{Q_WNormal} is positive, then any union of irreducible components of the multiplier ideal subscheme $V(u)$ is weakly normal near $o$.
\end{remark}

Furthermore, as a consequence of Theorem \ref{3D}, we are able to characterize the multiplier ideal subschemes with log canonical singularities as follows.

\begin{theorem} \label{char_3D}
Let $o\in\Omega\subset\mathbb{C}^{3}$ be the origin and $u\in\text{\emph{Psh}}(\Omega)$ with $c_{o}(u)=1$. Then, the multiplier ideal subscheme $V(u)$ at $o$ is exactly defined by one of the following ideals:

$(1)$ $(h)\cdot\mathcal{O}_{3}$, where $h\in\mathcal{O}_{3}$ is the minimal defining function of a germ of hypersurface with semi-log-canonical singularity at $o$;

$(2)$ $(z_1z_2,z_1z_3)\cdot\mathcal{O}_{3}$;

$(3)$ $(z_1,z_2)\cdot\mathcal{O}_{3}$;

$(4)$ $(z_1,z_2z_3)\cdot\mathcal{O}_{3}$;

$(5)$ $(z_1z_2,z_1z_3,z_2z_3)\cdot\mathcal{O}_{3}$;

$(6)$ $\mathfrak{m}_{3}$, up to a change of variables at $o$; where $\mathfrak{m}_{3}$ is the maximal ideal of $\mathcal{O}_{3}$.
\end{theorem}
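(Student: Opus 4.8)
The plan is to reduce the statement to a purely local classification of weakly normal reduced germs in $(\mathbb{C}^3,o)$, and then to run the classification according to the dimension of the support $A=N(\mathscr{I}(u))$. Since $c_o(u)=1$, Theorem \ref{3D} tells us that $V(u)$ is weakly normal at $o$; in particular it is reduced, so $\mathscr{I}(u)$ equals the radical ideal $\mathscr{I}_A$ of its own zero-set. Moreover, by Theorem \ref{SiuPsh} I may replace $u$ by the weight $\varphi_{\mathfrak{a}_\bullet}$ attached to the graded system $\mathfrak{a}_\bullet$ of symbolic powers of $\mathscr{I}_A$, which still satisfies $\mathrm{LCT}_o(\mathfrak{a}_\bullet)=1$ and cuts out the same germ $\mathscr{I}(u)_o$, but whose singularities are now governed by an algebraic graded system. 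Thus it suffices to determine which reduced weakly normal germs $A\subset(\mathbb{C}^3,o)$ can occur under the constraint $\mathrm{LCT}_o(\mathfrak{a}_\bullet)=1$; here I may also invoke the characterization of singularities of pairs in Theorem \ref{SingPairs0} and the adjunction exact sequence of Theorem \ref{adjoint}. The components of $A$ have dimension $0$, $1$ or $2$.

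The pure-dimensional cases are handled directly. If $\dim A=0$, then $A$, being reduced and supported at $o$, equals $\{o\}$ and $\mathscr{I}(u)=\mathfrak{m}_3$, which is case $(6)$. If $\dim A=1$, weak normality means the curve germ $(A,o)$ is seminormal, hence analytically isomorphic to a union of smooth branches with linearly independent tangent lines; since the ambient dimension is $3$ there are at most three branches, and up to a linear change of coordinates $A$ is the union of the corresponding coordinate axes, giving exactly cases $(3)$, $(4)$ and $(5)$ for one, two and three branches. If $\dim A=2$, then $A$ is a reduced hypersurface, so $\mathscr{I}(u)=(h)$ with $h$ the minimal reduced defining function; for a reduced hypersurface the symbolic and ordinary powers agree, so $\varphi_{\mathfrak{a}_\bullet}\sim\log|h|$ and $c_o(u)=\mathrm{lct}(h)=1$, which is equivalent to $(\mathbb{C}^3,\{h=0\})$ being log canonical, i.e. to $\{h=0\}$ having semi-log-canonical (slc) singularities. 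This is case $(1)$.

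The main obstacle is the mixed case, where $A$ has components of different dimensions. A component of dimension $0$ is an isolated point of $A$ and cannot coexist with a positive-dimensional component through $o$, so no such mixing occurs and we are reduced to $A=S\cup\Gamma$ with $\dim S=2$, $\dim\Gamma=1$ and $\Gamma\not\subset S$. Here I would first use weak normality for the linear part of the configuration: a finite union of linear subspaces through $o$ is weakly normal precisely when, in suitable coordinates, each subspace is spanned by a subset of the coordinate axes, and a union of two or more distinct planes, or of a single plane with two or more distinct transverse lines, is never of this form. Hence the only admissible linear configuration is a single plane together with a single transverse coordinate line, which is exactly case $(2)$. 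To exclude a \emph{genuinely singular} slc surface carrying an extra transverse curve, I would use the constraint $\mathrm{LCT}_o=1$ together with the adjunction exact sequence of Theorem \ref{adjoint}: restricting $u$ to $S$, a surface that is slc but singular already saturates the log canonical threshold, so an additional lower-dimensional log canonical centre $\Gamma$ would force the threshold strictly below $1$; equivalently, a generic hyperplane slice reduces the question to the two-dimensional classification established in \cite{G-L_LCT16}, which leaves no room for such a configuration. Balancing weak normality against the threshold condition is the delicate point, and is where I expect the real work to lie.

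Finally, for the converse I would check that each of $(1)$--$(6)$ is realized by an explicit plurisubharmonic weight with $c_o(u)=1$: for instance $u=3\log|z|$ gives $(6)$; $u=\log(|z_1|^2+|z_2|^2)$ gives $(3)$; $u=\log|z_1|+\log(|z_2|^2+|z_3|^2)$ gives $(2)$, since then $\mathscr{I}(u)=(z_1)\cap(z_2,z_3)=(z_1z_2,z_1z_3)$ while each factor contributes threshold $1$; and $u=\log|h|$ with $h$ an slc defining function gives $(1)$, cases $(4)$ and $(5)$ arising from analogous sums of weights attached to unions of coordinate lines. This shows the list is both complete and irredundant.
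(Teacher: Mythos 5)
Your overall strategy coincides with the paper's: invoke Theorem \ref{SiuPsh} and Theorem \ref{3D} to get weak normality of $V(u)$ (hence $\mathscr{I}(u)_o=\mathscr{I}_{A,o}$ is radical), then classify the reduced weakly normal germ $A$ according to dimension. The case $\dim_o A=0$, the pure one-dimensional case (a seminormal curve germ in $\mathbb{C}^3$ is a union of at most three smooth branches with linearly independent tangents, giving $(3)$, $(4)$, $(5)$), and the pure two-dimensional case (reduced hypersurface $(h)$ with $\mathrm{lct}_o(h)\geq c_o(u)=1$, hence semi-log-canonical, giving $(1)$) are all handled essentially as in the paper, and your realizing weights are correct.

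The genuine gap is exactly where you locate it: the mixed case $A=S\cup\Gamma$. Your proposed exclusion of a singular slc surface carrying a transverse curve via an LCT/adjunction or hyperplane-slice argument is not carried out, and your auxiliary claim about unions of linear subspaces is both unproved and false as stated (two distinct coordinate planes are each spanned by coordinate axes, and their union is weakly normal --- it is the normal crossing hypersurface $A_\infty$, which belongs to case $(1)$, not to the mixed case). The paper closes the mixed case with no further LCT input at all, using Lemma 2.3 of \cite{AL}: writing $A_1$ for the union of the two-dimensional components and $A_2$ for the union of the one-dimensional ones, one has $(A_1,o)\cap(A_2,o)=\{o\}$, and weak normality of $A_1\cup A_2$ then forces $T_oA_1\cap T_oA_2=0$ for the Zariski tangent spaces. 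Since $\dim T_oA_1\geq2$ and $\dim T_oA_2\geq1$ inside $\mathbb{C}^3$, both tangent spaces must have minimal dimension, so $A_1$ is a single smooth surface and $A_2$ a single smooth line meeting it transversely, whence $\mathscr{I}(u)_o=(z_1)\cap(z_2,z_3)=(z_1z_2,z_1z_3)$. Note in particular that this instantly rules out your problematic configuration: a surface germ that is singular at $o$ has $T_oA_1=\mathbb{C}^3$, so it cannot coexist with any curve component in a weakly normal germ. This tangent-space lemma is the missing idea; without it (or a completed version of your threshold argument) the derivation of case $(2)$ and the exclusion of all other mixed configurations remain incomplete.
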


\begin{remark}
For the sake of convenience, we provide a complete list on the classification of semi-log-canonical hypersurface singularities of dimension two in Table \ref{list} of Appendix \ref{B}; one can refer to \cite{K-SB} for more details (see also \cite{L-R}).

Moreover, all the cases in Theorem \ref{char_3D} will occur when we take $u$ to be the following weights respectively:
\begin{itemize}
  \item[(\text{1})] $u=\log|h|$;
  \item[(\text{2})] $u=\log(|z_1z_2^2|+|z_1z_3^2|)$;
  \item[(\text{3})] $u=\log(|z_1|^2+|z_2|^2)$;
  \item[(\text{4})] $u=\log(|z_1|^2+|z_2z_3|^2)$;
  \item[(\text{5})] $u=\log(|z_1z_2z_3|+|z_1z_2|^2+|z_1z_3|^2+|z_2z_3|^2)$;
  \item[(\text{6})] $u=\log(|z_1|^3+|z_2|^3+|z_3|^3)$.
\end{itemize}
\end{remark}

Based on Theorem \ref{char_3D}, in a subsequent paper \cite{G-L_LCT19}, we are able to answer the Question posed in \cite{G-L_LCT16} for the three-dimensional case, by combining the Ohsawa-Takegoshi $L^2$ extension theorem proved in \cite{OT87} with the restriction formula established in \cite{G-Z_restriction}. In particular, we can prove the following result in \cite{G-L_LCT19}:

\begin{theorem}
Let $o\in\Omega\subset\mathbb{C}^{3}\times\mathbb{C}^{n-3}\ (n\ge4)$ be the origin, $u\in\text{\emph{Psh}}(\Omega)$ and $H{=}\{z_4=\cdots=z_n=0\}\subset\Omega$ a three-dimensional plane through $o$. If $c_{o}(u|_H)=1$, then there exists a new local coordinates $(w_1,w_2,w_3;z_4,...,z_n)$ near $o$ such that $\mathscr{I}(u)_{o}$ is exactly equal to one of the following ideals:

$(1)\ \mathcal{O}_{n}$;\qquad\qquad\qquad $(2)\ \big(w_1,w_2,w_3\big)\cdot\mathcal{O}_{n}$;\qquad\qquad\qquad $(3)\ \big(w_1\big)\cdot\mathcal{O}_{n}$;

$(4)\ \big(w_1,w_2w_3+f(w_1,...,z_n)\big)\cdot\mathcal{O}_{n}$;

$(5)\ \big(w_1w_2,w_1w_3,w_2w_3\big)\cdot\mathcal{O}_{n}$;

$(6)\ \big(w_1w_2,w_1w_3\big)\cdot\mathcal{O}_{n}$;

$(7)\ \big(h(w_1,w_2,w_3)+f(w_1,...,z_n)\big)\cdot\mathcal{O}_{n}$, where $h\in\mathcal{O}_3$ is as in Theorem \ref{char_3D} and $f\in\mathscr{I}_{H,o}$.
\end{theorem}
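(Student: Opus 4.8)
The plan is to slice by $H$ and bootstrap the three-dimensional classification of Theorem \ref{char_3D} up to dimension $n$, using the Ohsawa--Takegoshi extension theorem \cite{OT87} and the restriction formula of \cite{G-Z_restriction} as the transport mechanism. First I would record the comparison $c_o(u|_H)\le c_o(u)$: this follows from the extension theorem, since for any $c<c_o(u|_H)$ the unit section $1$ on $H$ lies in $\mathscr{I}(cu|_H)_o$ and hence extends to a section of $\mathscr{I}(cu)_o$ nonvanishing at $o$, forcing $\mathscr{I}(cu)_o=\mathcal{O}_n$ and $c\le c_o(u)$. Together with the hypothesis $c_o(u|_H)=1$ this gives $c_o(u)\ge 1$ and dichotomizes the argument. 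If $c_o(u)>1$ then $e^{-2u}$ is locally integrable and $\mathscr{I}(u)_o=\mathcal{O}_n$, which is case $(1)$; otherwise $c_o(u)=c_o(u|_H)=1$, the \emph{balanced} regime, to which the remainder of the proof is devoted.

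In the balanced regime the Ohsawa--Takegoshi inclusion $\mathscr{I}(u|_H)_o\subseteq\mathscr{I}(u)_o\cdot\mathcal{O}_{H,o}$ and the reverse inclusion furnished by the restriction formula \cite{G-Z_restriction} combine to the identity $\mathscr{I}(u)_o\cdot\mathcal{O}_{H,o}=\mathscr{I}(u|_H)_o$. By Theorem \ref{char_3D}, after changing only the coordinates $z_1,z_2,z_3$ along $H$, the right-hand side is one of the six explicit three-dimensional ideals. Extension then lifts each generator $g$ of $\mathscr{I}(u|_H)_o$ to some $\tilde g\in\mathscr{I}(u)_o$ with $\tilde g=g+f$, $f\in\mathscr{I}_{H,o}=(z_4,\dots,z_n)$. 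The core of the argument is to prove that such lifts already generate $\mathscr{I}(u)_o$ and that the zero-set $A=N(\mathscr{I}(u))$ is a (deformed) cylinder over $A\cap H$, i.e.\ that restriction to $H$ preserves codimension and introduces no new transverse components. I would establish this by induction on $n-3$, peeling off one variable through the intermediate hyperplane $H'=\{z_n=0\}$: one checks $c_o(u|_{H'})=1$ as well, invokes the inductive normal form on $H'$, and shows that $z_n$ is a nonzerodivisor modulo $\mathscr{I}(u)_o$, so that $\mathscr{I}(u)_o$ is recovered from $\mathscr{I}(u)_o\cdot\mathcal{O}_{H',o}$ together with the lifted generators.

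With the cylinder-with-deformation structure in hand, the last step normalizes the lifted ideal, using the residual freedom in $w_1,w_2,w_3$ and the reducedness and weak normality of the multiplier ideal subscheme supplied by Theorem \ref{SingPairs0} and Theorem \ref{3D} to constrain the admissible correction $f$. For those members of the three-dimensional list whose germ is rigid --- the maximal ideal, the smooth complete intersections, and the monomial arrangements $(z_1z_2,z_1z_3)$ and $(z_1z_2,z_1z_3,z_2z_3)$ --- the correction is absorbed by a coordinate change and one obtains the clean forms $(2),(3),(5),(6)$; for the two members carrying a genuine singularity with nontrivial versal deformation, namely the node $(z_1,z_2z_3)$ and a singular semi-log-canonical hypersurface $(h)$, the transverse correction cannot be removed and survives as the $f$ appearing in forms $(4)$ and $(7)$.

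The step I expect to be the main obstacle is the cylinder structure --- concretely, that $z_4,\dots,z_n$ form a regular sequence modulo $\mathscr{I}(u)_o$, so that no component of $A$ is trapped inside $H'$ and codimension is preserved under restriction. This is exactly where the balanced condition $c_o(u)=c_o(u|_H)=1$ must be used in full force: the identity $\mathscr{I}(u)_o\cdot\mathcal{O}_{H,o}=\mathscr{I}(u|_H)_o$ fails the instant the two thresholds differ (e.g.\ for $u=\log|z|$ on $\mathbb{C}^2$, where $c_o(u)=2>1=c_o(u|_H)$ and the ambient ideal is already trivial), so controlling this transverse regularity, rather than the subsequent bookkeeping of normal forms, is the crux.
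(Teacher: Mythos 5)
The paper does not actually prove this theorem here: it is explicitly deferred to the companion paper \cite{G-L_LCT19}, and all this note records is the strategy (Ohsawa--Takegoshi extension plus the restriction formula of \cite{G-Z_restriction}, fed into Theorem \ref{char_3D}). Your plan matches that one-line description superficially, but it rests on an identity that is false. You claim that in the ``balanced'' regime $c_o(u)=c_o(u|_H)=1$ one has $\mathscr{I}(u)_o\cdot\mathcal{O}_{H,o}=\mathscr{I}(u|_H)_o$, with the inclusion $\mathscr{I}(u)_o\cdot\mathcal{O}_{H,o}\subseteq\mathscr{I}(u|_H)_o$ ``furnished by the restriction formula.'' Take $n=4$, $H=\{z_4=0\}$ and
$$u=\log|z_1|+\tfrac12\log\big(|z_3|^2+|z_4|^2\big).$$
Then $u|_H=\log|z_1z_3|$, so $c_o(u|_H)=1$ and $\mathscr{I}(u|_H)_o=(z_1z_3)\cdot\mathcal{O}_3$; also $c_o(u)=1$, so you are in the balanced regime. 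But $(|z_3|^2+|z_4|^2)^{-1}$ is locally integrable on $\mathbb{C}^2_{(z_3,z_4)}$, whence $\mathscr{I}(u)_o=(z_1)\cdot\mathcal{O}_4$ and $\mathscr{I}(u)_o\cdot\mathcal{O}_{H,o}=(z_1)\supsetneq(z_1z_3)=\mathscr{I}(u|_H)_o$. The conclusion of the theorem still holds for this $u$ (it is case $(3)$ with $w_1=z_1$), but not by your route: the ideal you would compute on $H$ and then lift, namely $(z_1z_3)$, is simply not the restriction of $\mathscr{I}(u)_o$.

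The source of the error is a misreading of the restriction formula. What \cite{G-Z_restriction} (and Theorem \ref{restriction} of this paper) provides is a comparison of $\mathscr{I}(u|_H)$ with the restriction of the \emph{adjoint-type} ideals $\mathscr{I}\big(u+(n-3-\delta)\log|z''|\big)$, equivalently $Adj_H(u)\cdot\mathcal{O}_H=\widehat{\mathscr{I}}(u|_H)$ --- the weight must be twisted by a log pole along $H$ before restricting. The only unconditional relation between $\mathscr{I}(u)|_H$ and $\mathscr{I}(u|_H)$ is the Ohsawa--Takegoshi inclusion $\mathscr{I}(u|_H)_o\subseteq\mathscr{I}(u)_o\cdot\mathcal{O}_{H,o}$, and the example above shows it can be strict even when both thresholds equal $1$. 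Since the identity fails, everything downstream collapses: the generators of $\mathscr{I}(u|_H)_o$ taken from the list of Theorem \ref{char_3D} need not lift to generators of $\mathscr{I}(u)_o$, the zero set $N(\mathscr{I}(u))$ need not be a cylinder over $N(\mathscr{I}(u|_H))$ (in the example it is a cylinder over a strictly smaller set), and the induction on $n-3$ has no base identity to propagate. A correct argument has to work through the adjoint/twisted ideals $\mathscr{I}(u+c\log|z''|)$ and the inversion-of-adjunction statement of \cite{DK01}, which is precisely the route indicated for \cite{G-L_LCT19}; your first step (the dichotomy $c_o(u)>1$ versus $c_o(u)=1$ via extension of the unit section) is the only part that survives as written.
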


Finally, relying on Theorem \ref{3D}, we give the following characterization of log canonical locus of pair $(X,\frak{a}_{\bullet})$ associated to graded system of ideals in the algebraic setting.

\begin{corollary} \label{LCS_pairs3D}
Let $X$ be a smooth complex quasi-projective threefold and $\frak{a}_{\bullet}=\{\frak{a}_m\}$ be any graded system of ideals on $X$.

If the log canonical threshold $\emph{LCT}(\frak{a}_{\bullet})=1$ on $X$, then the multiplier ideal subscheme $V(\frak{a}_{\bullet})$ associated to $\frak{a}_{\bullet}$ is weakly normal. In further, all the singularities of $V(\frak{a}_{\bullet})$ are precisely those presented in Theorem \ref{char_3D}.
\end{corollary}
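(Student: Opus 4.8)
The plan is to transport the algebraic statement into the analytic framework of Theorem \ref{3D} by attaching to the graded system $\frak{a}_{\bullet}$ a single plurisubharmonic weight whose multiplier ideal subscheme coincides with $V(\frak{a}_{\bullet})$. Since weak normality and the local analytic type of a singularity are local (indeed formal-local) properties, it suffices to work in a neighborhood of an arbitrary point $x\in X$; fixing local coordinates I identify such a neighborhood with a domain $\Omega\subset\mathbb{C}^{3}$ and $x=o$. For each $m$ I would choose local generators $(g_{m,k})_{k}$ of $\frak{a}_m$ and set $\log|\frak{a}_m|=\frac12\log\sum_k|g_{m,k}|^2$. The inclusions $\frak{a}_p\cdot\frak{a}_q\subseteq\frak{a}_{p+q}$ give the pointwise superadditivity $\log|\frak{a}_{p+q}|\ge\log|\frak{a}_p|+\log|\frak{a}_q|$, so by Fekete's lemma $\tfrac1m\log|\frak{a}_m|$ converges pointwise to its supremum, and I define
$$u:=\Big(\sup_m\tfrac1m\log|\frak{a}_m|\Big)^{*}\in\mathrm{Psh}(\Omega),$$
the upper semicontinuous regularization. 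Finiteness of the log canonical threshold forces $\frak{a}_m\neq 0$, so $u\not\equiv-\infty$.

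The second step is the dictionary between $\frak{a}_{\bullet}$ and $u$. On the threshold side, $c_o\big(\tfrac1m\log|\frak{a}_m|\big)=m\cdot\mathrm{lct}_o(\frak{a}_m)$, and since $u\ge\tfrac1m\log|\frak{a}_m|$ one gets $c_o(u)\ge\sup_m m\,\mathrm{lct}_o(\frak{a}_m)=\mathrm{lct}_o(\frak{a}_{\bullet})$; the reverse inequality, and hence the identity $c_o(u)=\mathrm{lct}_o(\frak{a}_{\bullet})$, I would obtain by commuting the complex singularity exponent with the supremum defining $u$, using the strong openness property (Theorem \ref{property}). On the ideal side, the analytic description of asymptotic multiplier ideals (see \cite{La04}) identifies the algebraic multiplier ideal of the graded system with the analytic one of $u$, namely $\mathscr{J}(\frak{a}_{\bullet})=\mathscr{I}(u)$ in $\mathcal{O}_{\Omega,o}$, so the subschemes agree: $V(\frak{a}_{\bullet})=V(u)$ near $o$. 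The hypothesis $\mathrm{LCT}(\frak{a}_{\bullet})=1$ means $\mathrm{lct}_x(\frak{a}_{\bullet})\ge1$ everywhere; where the inequality is strict one has $c_o(u)>1$, hence $\mathscr{I}(u)_o=\mathcal{O}$ and $V(\frak{a}_{\bullet})$ is empty there, so $\mathrm{Supp}\,V(\frak{a}_{\bullet})$ is exactly the locus $\{\mathrm{lct}_x(\frak{a}_{\bullet})=1\}$, on which $c_o(u)=1$.

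With the dictionary in place the conclusion is immediate. At every point $x\in\mathrm{Supp}\,V(\frak{a}_{\bullet})$ we have $c_o(u)=1$, so Theorem \ref{3D} shows that $V(u)=V(\frak{a}_{\bullet})$ is weakly normal at $x$; as $x$ is arbitrary, $V(\frak{a}_{\bullet})$ is weakly normal. Likewise Theorem \ref{char_3D} applied to $u$ presents the germ $(V(u),o)=(V(\frak{a}_{\bullet}),x)$ as one of the six listed normal forms, yielding the asserted classification of all singularities of $V(\frak{a}_{\bullet})$. Here one only needs to remark that these normal forms are given by explicit ideals whose weak normality is a formal-local matter, so passing them from the analytic category back to the algebraic $X$ is harmless.

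The main obstacle is the second step, i.e. the precise correspondence $\mathscr{J}(\frak{a}_{\bullet})=\mathscr{I}(u)$ together with $c_o(u)=\mathrm{lct}_o(\frak{a}_{\bullet})$. The threshold identity requires justifying the interchange of $c_o$ with the supremum defining $u$, for which strong openness is the natural tool, while the ideal identity rests on the analytic construction of asymptotic multiplier ideals and on knowing that the limit weight $u$ computes them; both are standard but must be invoked with care, since $u$ is merely a limit of analytic-singularity weights and need not itself have analytic singularities. Everything else—the local reduction, the vanishing of $V(\frak{a}_{\bullet})$ off the log canonical locus, and the final appeal to Theorems \ref{3D} and \ref{char_3D}—is formal.
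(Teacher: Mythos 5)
Your overall strategy coincides with the paper's: localize to a domain in $\mathbb{C}^3$, attach to $\frak{a}_{\bullet}$ a single plurisubharmonic weight whose multiplier ideals and log canonical threshold reproduce those of the graded system, observe that the local threshold equals $1$ exactly on the support of $V(\frak{a}_{\bullet})$, and then quote Theorem \ref{3D} and Theorem \ref{char_3D}. The one substantive difference is the choice of weight. The paper takes the Siu plurisubharmonic function $\varphi_{\frak{a}_{\bullet}}=\log\big(\sum_m\varepsilon_m|\frak{a}_m|^{1/m}\big)$, for which the entire dictionary $\mathscr{I}(c\cdot\varphi_{\frak{a}_{\bullet}})=\mathscr{I}(c\cdot\frak{a}_{\bullet})$ for all $c>0$ (hence both the ideal identity and the threshold identity at once) is already packaged in Lemma \ref{equivalence}, so the proof is a two-line citation. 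You instead take the upper envelope $u=\big(\sup_m\frac1m\log|\frak{a}_m|\big)^{*}$, which forces you to re-derive the dictionary yourself; this is exactly the step you flag as the ``main obstacle.'' It can be completed — the superadditivity gives an increasing subsequence $\frac{1}{k!}\log|\frak{a}_{k!}|$ converging to $u$, and then Remark \ref{Re_SOC} (strong openness for increasing sequences) yields $\mathscr{I}(u)=\bigcup_k\mathscr{I}\big(\frac{1}{k!}\log|\frak{a}_{k!}|\big)=\mathscr{J}(\frak{a}_{\bullet})$ together with $c_o(u)=\mathrm{lct}_o(\frak{a}_{\bullet})$ — but you should make that monotone-subsequence reduction explicit, since the full sequence $\frac1m\log|\frak{a}_m|$ is not monotone and bare strong openness at a single weight does not by itself let you commute $c_o$ with the supremum. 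In short: your route is correct and parallel to the paper's, but choosing the Siu weight and invoking Lemma \ref{equivalence} eliminates the only nontrivial step you left open.
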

\bigskip

\section{Multiplier and adjoint ideal sheaves}

In this section, we firstly present the Ohsawa's extension measure arising from the research of so-called Ohsawa-Takigoshi $L^2$ extension theorem. Next, we will construct the analytic multiplier and adjoint ideal sheaves on complex spaces via the measure and then prove some basic properties of them.

\subsection{Ohsawa's extension measure}

In order to establish a general $L^2$ extension theorem, Ohsawa \cite{Ohsawa5} introduced a positive measure on regular part of the associated closed complex subspace. Afterwards, associated with the same measure, Guan and Zhou \cite{G-Z_optimal} established two very general $L^2$ extension theorems with optimal estimates; see also \cite{De16, CDM17, Chan18} for various $L^2$ extension theorems related to Ohsawa's extension measure. In the subsequent parts, we will use the Ohsawa's extension measure to study the multiplier and adjoint ideal sheaves on complex spaces with singularities.
\medskip

First of all, let's recall some notations in \cite{Ohsawa5} (see also \cite{G-Z_optimal}). Let $M$ be an $(n+r)$-dimensional complex manifold and $X$ a (closed) complex subspace of $M$. Let $dV_{M}$ be a continuous volume form on $M$. Then, we consider a class of upper semi-continuous functions $\Psi$ from $M$ to the interval $[-\infty,A)$ with $A\in(-\infty,+\infty]$, such that

$(1)$ $\Psi^{-1}(-\infty)\supset X$ and

$(2)$ If $X$ is $k$-dimensional around a point $x\in X\mspace{0mu}_{\text{reg}}$ ( the regular part of $X$), there exists a local coordinate $(z_{1},\cdots,z_{n+r})$ on a neighborhood $U$ of $x$ such that $z_{k+1}=\cdots=z_{n+r}=0$ on $X\cap U$ and
$$\sup_{U\setminus X}\big|\Psi(z)-(n+r-k)\log\sum_{j=k+1}^{n+r}|z_{j}|^{2}\big|<+\infty.$$

The set of such functions $\Psi$ will be denoted by $\#_{A}(X)$. For each $\Psi\in\#_{A}(X)$, one can associate a positive measure $dV_{X}[\Psi]$ on $X_\text{reg}$ as the minimum element of the partially ordered set of positive measures $d\mu$ satisfying
$$\int_{X_{k}}fd\mu\geq\limsup_{t\to+\infty}\frac{2(n+r-k)}{\sigma_{2(n+r-k)-1}}\int_{M}fe^{-\Psi}\mathbbm{1}_{\{-t-1<\Psi<-t\}}dV_{M}$$
for any nonnegative continuous function $f$ with $\text{Supp}\,f\,{\subset\subset}\,M\backslash X_\text{sing}$, where $X_{k}$ denotes the $k$-dimensional component of $X_\text{reg}$, $\sigma_{m}$ denotes the volume of the unit sphere in $\mathbb{R}^{m+1}$ and $\mathbbm{1}_{\{-t-1<\Psi<-t\}}$ denotes the characteristic function of the set $\{z\in M|-t-1<\Psi(z)<-t\}$.

\begin{remark} \label{computation}
If $X\subset M$ is a complex subspace of pure codimension $r$ such that $\mathscr{I}_X$ is globally generated by holomorphic functions $g_1,\dots,g_m$ on $M$, by taking $$\Psi=r\log(|g_1|^2+\cdots+|g_m|^2),$$ one can check that the measure $dV_{X}[\Psi]$ on $X_\text{reg}$ can be defined by
$$\int_{X_\text{reg}}fdV_{X}[\Psi]=\limsup_{t\to+\infty}\frac{2r}{\sigma_{2r-1}}\int_{M}fe^{-\Psi}\mathbbm{1}_{\{-t-1<\Psi<-t\}}dV_{M}$$
for any nonnegative continuous function $f$ with $\text{Supp}\,f\,{\subset\subset}\,M\backslash X_\text{sing}$.
\medskip

By Theorem 2.0.2 in \cite{Wlo08} (see also \cite{Eisen10}, Corollary 3.2), we obtain a strong factorizing desingularization $\pi:\widetilde M\to M$ of $X$ such that $\pi^*(\mathscr{I}_X)=\mathscr{I}_ {\widetilde X}\cdot\mathscr{I}_{R_X}$, where $\widetilde X$ is the strict transform of $X$ in $\widetilde M$ and $R_X$ is an effective divisor supported on the exceptional divisor $\text{Ex}(\pi)$ of $\pi$. Then, one can check that the measure $dV_{X}[\Psi]$ is the direct image of measures defined upstairs by
$$f\longmapsto\int_{\widetilde X}\left|f\circ\pi|_{\widetilde X}\right|^2\cdot\left|\frac{\text{Jac}(\pi)}{h_{R_X}^r}\Big|_{\widetilde X}\right|^2dV_{\widetilde X},$$
up to equivalence.
\medskip

In particular, if $M$ is a domain in $\mathbb{C}^{n+r}$ and $X\subset M$ is a complete intersection, then one can check that $dV_{X}[\Psi]$ is the measure on $X_{\text{reg}}$ such that $$dV_{X}[\Psi]=\frac{1}{|\Lambda^r(dg)|^2}dV_X,$$
where $$dV_X=\frac{1}{n!}\omega^{n}|_{X_\text{reg}}\ \ \text{and}\ \ \omega=\frac{\sqrt{-1}}{2}\sum\limits_{k=1}^{n+r}dz_k\wedge d\bar z_k.$$
\end{remark}

\subsection{Multiplier ideal sheaves on complex spaces}

Different from the algebraic multiplier ideals, the usual analytic multiplier ideals are constructed by the integrability of multipliers associated to plurisubharmonic weights with respect to the Lebesgue measure. In the analytic setting, the main difficulty of extending the notion of multiplier ideals to the singular case is how to choose a suitable measure in the construction by means of integrability.

For $\mathbb{Q}$-Gorenstein complex spaces (e.g., complete intersections), it is natural to choose an adapted measure through the associated pluricanonical forms (cf. \cite{L-Z}). Unfortunately, the approach is out of work for the general case. In the following, we will construct the multiplier ideals on complex spaces of pure dimension, via the Ohsawa's extension measure.

\begin{definition} \label{MIS_NO}
Let $X$ be a complex space of pure dimension $n$ and $\varphi\in L_\text{loc}^1(X_\text{reg})$ with respect to the Lebesgue measure. Let $X\hookrightarrow\Omega\subset\mathbb{C}^{n+r}$ be a local embedding such that $\mathscr{I}_X$ is generated by holomorphic functions $g_1,\dots,g_m$ on $\Omega$.

The \emph{Nadel-Ohsawa multiplier ideal sheaf} $\widehat{\mathscr{I}}(\varphi)$ on $X$ is defined to be the fractional ideal sheaf of germs of meromorphic functions $f\in\mathscr{M}_{X,x}$ such that $|f|^2e^{-2\varphi}$ is locally integrable at $x$ on $X$ with respect to the measure $dV_{X}[\Psi]$, where $\Psi=r\log(|g_1|^2+\cdots+|g_m|^2)$. One can check that $\widehat{\mathscr{I}}(\varphi)$ is independent of the local embedding of $X$ and the choice of generators of $\mathscr{I}_{X}$.
\end{definition}

\begin{remark}
$(1)$ By a suitable choice of the polar function $\Psi$, we can also define multiplier ideal sheaves on complex spaces \emph{not} necessarily of pure dimension.

$(2)$ If the complex space $X\subset M$ is locally a complete intersection (not necessarily normal), thanks to the adjunction formula, it yields that $\omega_X^{\text{GR}}(\varphi)=\omega_X\otimes\widehat{\mathscr{I}}(\varphi)$, where $\omega_X^{\text{GR}}(\varphi)$ is the $\mathcal{O}_X$-sheaf on $X$ defined by
$$\Gamma(U,\omega_X^{\text{GR}}(\varphi))=\{\sigma\in\Gamma(U\cap X_\text{reg},\omega_{X_\text{reg}})\ |\
(\sqrt{-1})^{n^2}e^{-2\varphi}\sigma\wedge\overline\sigma\in L_\text{loc}^1(U)\}$$
for any open subset $U\subset X$.
\end{remark}

Analogous to the smooth case, we have the following:

\begin{theorem} \label{property}
With the same notations as above and $\varphi\in\emph{Psh}(X)$, it follows that

$(1)$ \emph{(Coherence).} $\widehat{\mathscr{I}}(\varphi)\subset\mathcal{O}_X$ is a coherent ideal sheaf.

$(2)$ \emph{(Strong openness).}
      $\widehat{\mathscr{I}}(\varphi)=\widehat{\mathscr{I}}_+(\varphi):=\bigcup\limits_{\varepsilon>0}\widehat{\mathscr{I}}\big((1+\varepsilon)\varphi\big)$.
\end{theorem}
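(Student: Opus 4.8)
The plan is to reduce both assertions to the smooth case by transporting the problem to a strong factorizing desingularization and using the direct-image description of Ohsawa's measure recorded in Remark \ref{computation}. Fix $x\in X$ and a local embedding $X\hookrightarrow\Omega\subset\mathbb{C}^{n+r}$ with $\mathscr{I}_X=(g_1,\dots,g_m)$ and $\Psi=r\log(\sum_k|g_k|^2)$. I would take $\pi:\widetilde\Omega\to\Omega$ a strong factorizing desingularization of $X$ as in Remark \ref{computation}, write $\nu:=\pi|_{\widetilde X}:\widetilde X\to X$ for the induced resolution onto the (smooth) strict transform, $J:=\mathrm{Jac}(\pi)|_{\widetilde X}$, and $h_{R_X}$ for a local generator of $\mathscr{I}_{R_X}$. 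By the direct-image formula of Remark \ref{computation}, for $f\in\mathscr{M}_{X,x}$ one has $f\in\widehat{\mathscr{I}}(\varphi)_x$ if and only if $\int|\nu^*f|^2e^{-2\nu^*\varphi}\,|J/h_{R_X}^{\,r}|^2\,dV_{\widetilde X}<+\infty$ on a neighbourhood of the compact fibre $\nu^{-1}(x)$, where $\nu^*\varphi=\varphi\circ\nu$ is plurisubharmonic on $\widetilde X$.

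The key observation is then that the natural multiplier to work with is holomorphic. Set $\tilde G:=\nu^*f\cdot J/h_{R_X}^{\,r}\in\mathscr{M}_{\widetilde X}$. Since $\varphi$ is plurisubharmonic, $\nu^*\varphi$ is locally bounded above near $\nu^{-1}(x)$, so $e^{-2\nu^*\varphi}$ is bounded below by a positive constant; hence the finiteness above already forces $\tilde G\in L^2_{\mathrm{loc}}$, and a meromorphic function on the smooth $\widetilde X$ that is locally square-integrable is holomorphic. Thus $\tilde G\in\mathcal{O}_{\widetilde X}$, and the defining condition becomes $\tilde G\in\mathscr{I}(\nu^*\varphi)_y$ for every $y\in\nu^{-1}(x)$, with $\mathscr{I}(\cdot)$ the usual multiplier ideal sheaf on the manifold $\widetilde X$. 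Under the canonical identification $\nu^*:\mathscr{M}_X\xrightarrow{\ \sim\ }\nu_*\mathscr{M}_{\widetilde X}$ (valid because $\nu$ is a proper modification), this yields the sheaf-theoretic formula
\[
\widehat{\mathscr{I}}(\varphi)=\nu_*\big(h_{R_X}^{\,r}\,J^{-1}\cdot\mathscr{I}(\nu^*\varphi)\big).
\]

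For part $(1)$, coherence of $\mathscr{I}(\nu^*\varphi)$ on $\widetilde X$ is Nadel's theorem \cite{De10}; the fractional ideal $h_{R_X}^{\,r}J^{-1}\mathscr{I}(\nu^*\varphi)$ is a coherent $\mathcal{O}_{\widetilde X}$-submodule of $\mathscr{M}_{\widetilde X}$, and since $\nu$ is proper, Grauert's direct-image theorem \cite{GR84} gives that its push-forward $\widehat{\mathscr{I}}(\varphi)$ is coherent on $X$. For the containment $\widehat{\mathscr{I}}(\varphi)\subset\mathcal{O}_X$ I would use that $\varphi$ is locally bounded above, so $\widehat{\mathscr{I}}(\varphi)\subseteq\widehat{\mathscr{I}}(0)=\nu_*\big(h_{R_X}^{\,r}J^{-1}\mathcal{O}_{\widetilde X}\big)$; the latter is the Mather--Jacobian multiplier ideal $\mathscr{I}_{\mathrm{MJ}}(\mathcal{O}_X)$, which is by construction a genuine ideal of $\mathcal{O}_X$ (cf. \cite{deFD14, EIM16}), giving the assertion. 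For part $(2)$, the inclusion $\widehat{\mathscr{I}}_+(\varphi)\subseteq\widehat{\mathscr{I}}(\varphi)$ is immediate after normalising $\varphi\le0$ near $x$. For the reverse inclusion, take $f\in\widehat{\mathscr{I}}(\varphi)_x$, so that $\tilde G\in\mathscr{I}(\nu^*\varphi)_y$ for all $y\in\nu^{-1}(x)$; applying the Guan--Zhou strong openness theorem \cite{G-Z_open} to the plurisubharmonic function $\nu^*\varphi$ on the smooth $\widetilde X$ produces, at each such $y$, some $\varepsilon_y>0$ with $\tilde G\in\mathscr{I}\big((1+\varepsilon_y)\nu^*\varphi\big)_y$, and a single $\varepsilon>0$ then works on a neighbourhood of the fibre by coherence of the increasing family $\mathscr{I}\big((1+\tfrac1k)\nu^*\varphi\big)$ together with the compactness of $\nu^{-1}(x)$. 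Unwinding the equivalence gives $f\in\widehat{\mathscr{I}}\big((1+\varepsilon)\varphi\big)_x\subseteq\widehat{\mathscr{I}}_+(\varphi)_x$.

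I expect the main obstacle to lie in the first reduction step rather than in the subsequent invocation of the smooth-case machinery: one must set up the resolution and the direct-image formula precisely, and above all recognise that the \emph{meromorphic} multiplier $\nu^*f\cdot J/h_{R_X}^{\,r}$ is in fact holomorphic, so that Nadel's theorem and Guan--Zhou apply verbatim. A related subtlety, crucial for strong openness, is that the perturbation must touch only the weight $\varphi$ while leaving the divisorial factor $r\log|h_{R_X}|$ and the Jacobian $J$ fixed; this is exactly why it is preferable to absorb $J/h_{R_X}^{\,r}$ into the holomorphic function $\tilde G$ rather than into the plurisubharmonic weight, so that the openness is genuinely about $\nu^*\varphi$ alone.
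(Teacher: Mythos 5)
Your proof follows essentially the same route as the paper: the paper's (sketched) argument rests precisely on the identity $\widehat{\mathscr{I}}(\varphi)=\pi_*\big(\mathcal{O}(K_{\widetilde\Omega/\Omega}-rR_X)|_{\widetilde X}\otimes\mathscr{I}(\varphi\circ\pi|_{\widetilde X})\big)$ over a strong factorizing desingularization, which is exactly your formula $\nu_*\big(h_{R_X}^{\,r}J^{-1}\mathscr{I}(\nu^*\varphi)\big)$ written divisorially, with coherence and strong openness then pulled back from the smooth model. The details you supply (holomorphy of the multiplier $\nu^*f\cdot J/h_{R_X}^{\,r}$, Grauert's direct image theorem, Guan--Zhou plus compactness of the fibre) are consistent with what the paper leaves implicit.
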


For the proof, we may assume that $X$ is a complex subspace of some domain $\Omega$ in $\mathbb{C}^{n+r}$ and $\pi:\widetilde\Omega\to\Omega$ is a strong factorizing desingularization of $X$ such that $\pi^*(\mathscr{I}_X)=\mathscr{I}_ {\widetilde X}\cdot\mathscr{I}_{R_X}$, where $\widetilde X$ is the strict transform of $X$ in $\widetilde\Omega$ and $R_X$ is an effective divisor supported on $\text{Ex}(\pi)$. Then, we can show that
$$\widehat{\mathscr{I}}(\varphi)=\pi_*\left(\mathcal{O}(K_{\widetilde\Omega/\Omega}-rR_X)|_{\widetilde X}\otimes\mathscr{I}(\varphi\circ\pi|_{\widetilde X})\right).$$

\begin{remark} \label{Re_SOC}
In fact, for the strong openness of multiplier ideal sheaves, if $(\varphi_k)$ is a sequence of plurisubharmonic functions converging to $\varphi$ with $\varphi_k\leq\varphi$ on $X$, then we have
$$\widehat{\mathscr{I}}(\varphi)=\widehat{\mathscr{I}}_+(\varphi)=\bigcup\limits_{k}\widehat{\mathscr{I}}(\varphi_k).$$
\end{remark}

\begin{remark}
If $\varphi_A$ is a (quasi)-plurisubharmonic function on $X$ with analytic singularities and $\varphi_A\not\equiv-\infty$ on every irreducible component of $X$, by combining with an argument of log resolution, we can deduce that the fractional ideal sheaf $\widehat{\mathscr{I}}(\varphi-\varphi_A)\subset\mathscr{M}_X$ is coherent and satisfies the strong openness property (cf. \cite{L-Z}).
\end{remark}

Combining with Lemma 4.4 in \cite{Eisen10}, we achieve the equivalence of Mather-Jacobian and Nadel-Ohsawa multiplier ideal sheaves for analytic weights as follows.

\begin{theorem} \label{MIS_equi}
Let $X$ be a complex space of pure dimension, $\mathfrak{a}\subset\mathcal{O}_X$ a nonzero ideal sheaf on $X$ and $c\in\mathbb{R}_{\geq0}$. Then, the Mather-Jacobian multiplier ideal sheaf associated to $\mathfrak{a}^c$ coincides with the Nadel-Ohsawa multiplier ideal sheaf associated to $\varphi_{c\cdot\frak{a}}$, i.e.,
$$\mathscr{I}_\emph{MJ}(\frak{a}^c)=\widehat{\mathscr{I}}(\varphi_{c\cdot\frak{a}}),$$
where $\varphi_{c\cdot\frak{a}}=\frac{c}{2}\log(\sum_k|f_k|^2)$ and $(f_k)$ is any local system of generators of $\frak{a}$.
\end{theorem}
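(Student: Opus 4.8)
The plan is to reduce the statement to a local computation on a single common resolution and then to match the two ideal sheaves divisor-by-divisor, using the formula for $\widehat{\mathscr{I}}$ recorded right after Theorem \ref{property} together with the comparison between the embedded invariants and the intrinsic (Mather--Jacobian) invariants supplied by Lemma 4.4 of \cite{Eisen10}. First I would localize: since both sides are defined locally and are independent of the chosen embedding and of the generators, I may assume $X\subset\Omega\subset\mathbb{C}^{n+r}$ with $\mathscr{I}_X$ generated by $g_1,\dots,g_m$ and $\mathfrak{a}$ generated by $f_1,\dots,f_k$. I then choose one strong factorizing desingularization $\pi:\widetilde\Omega\to\Omega$ that \emph{simultaneously} resolves $X$, the ideal $\mathfrak{a}$, and the Jacobian ideal $\mathrm{Jac}_X$, so that the strict transform $\widetilde X$ is smooth, $\pi|_{\widetilde X}:\widetilde X\to X$ is a resolution factoring through the Nash blow-up of $X$, $\pi^*(\mathscr{I}_X)=\mathscr{I}_{\widetilde X}\cdot\mathscr{I}_{R_X}$, and $\mathfrak{a}\cdot\mathcal{O}_{\widetilde X}=\mathcal{O}_{\widetilde X}(-F)$ with $F$ and the relevant exceptional divisors of simple normal crossings.

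Next I would compute the analytic side. Since $\varphi_{c\cdot\mathfrak{a}}=\tfrac{c}{2}\log\sum_k|f_k|^2$ has analytic singularities governed by $\mathfrak{a}$, on the smooth variety $\widetilde X$ the pulled-back weight satisfies $\mathscr{I}(\varphi_{c\cdot\mathfrak{a}}\circ\pi|_{\widetilde X})=\mathcal{O}_{\widetilde X}(-\lfloor cF\rfloor)$. Substituting this into the formula
$$\widehat{\mathscr{I}}(\varphi)=\pi_*\big(\mathcal{O}(K_{\widetilde\Omega/\Omega}-rR_X)|_{\widetilde X}\otimes\mathscr{I}(\varphi\circ\pi|_{\widetilde X})\big)$$
recorded after Theorem \ref{property} yields
$$\widehat{\mathscr{I}}(\varphi_{c\cdot\mathfrak{a}})=\pi_*\,\mathcal{O}_{\widetilde X}\big((K_{\widetilde\Omega/\Omega}-rR_X)|_{\widetilde X}-\lfloor cF\rfloor\big).$$
On the intrinsic side, writing $\widehat K_{\widetilde X/X}$ for the relative Mather canonical divisor (coming via the Nash blow-up from the map $\pi|_{\widetilde X}^*\wedge^n\Omega_X^1\to\omega_{\widetilde X}$) and $J_{\widetilde X/X}$ for the divisor with $\mathrm{Jac}_X\cdot\mathcal{O}_{\widetilde X}=\mathcal{O}_{\widetilde X}(-J_{\widetilde X/X})$, the Mather--Jacobian multiplier ideal of \cite{deFD14,EIM16} is
$$\mathscr{I}_{\mathrm{MJ}}(\mathfrak{a}^c)=\pi_*\,\mathcal{O}_{\widetilde X}\big(\widehat K_{\widetilde X/X}-J_{\widetilde X/X}-\lfloor cF\rfloor\big).$$
Thus the theorem reduces to the single divisorial identity
$$(K_{\widetilde\Omega/\Omega}-rR_X)\big|_{\widetilde X}=\widehat K_{\widetilde X/X}-J_{\widetilde X/X}\qquad\text{on }\widetilde X.$$

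This last identity is exactly where I expect the real work to lie, and it is precisely the point at which Lemma 4.4 of \cite{Eisen10} enters. Proving it requires comparing two genuinely different packages of invariants: on the left, the \emph{embedded} data of the ambient resolution $\pi$ --- the relative canonical $K_{\widetilde\Omega/\Omega}$ (the vanishing divisor of $\mathrm{Jac}(\pi)$) and the codimension-weighted factorizing divisor $rR_X$, restricted to the strict transform; on the right, the \emph{intrinsic} data of $X$ --- the Mather discrepancy read off from the Nash blow-up and the Fitting/Jacobian ideal $\mathrm{Jac}_X=\mathrm{Fitt}_0(\Omega_X^1)$. The bridge I would use is adjunction along $\widetilde X\subset\widetilde\Omega$ combined with the conormal sequence $\mathscr{I}_X/\mathscr{I}_X^2\to\Omega_\Omega^1|_X\to\Omega_X^1\to0$: pulling back the top exterior power of this sequence to $\widetilde X$ identifies the image of $\pi|_{\widetilde X}^*\wedge^n\Omega_X^1$ inside $\omega_{\widetilde X}$ with $\omega_{\widetilde X}$ twisted by the pullback of $\mathrm{Jac}_X$ and the conormal contribution, and a local computation with the $r\times r$ minors of the Jacobian matrix of $(g_1,\dots,g_m)$ matches this against $(K_{\widetilde\Omega/\Omega}-rR_X)|_{\widetilde X}$ through adjunction.

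Once the identity is established the proof closes immediately, since then the two sheaves are $\pi_*$ of identical line bundles on $\widetilde X$. As a consistency check before tackling the general case I would verify the identity directly in the complete-intersection situation, where $\mathrm{Jac}_X$ is generated by the maximal minors and the Mather canonical reduces to the ordinary one, against the explicit formula $dV_X[\Psi]=|\Lambda^r(dg)|^{-2}dV_X$ of Remark \ref{computation}; this sanity check both fixes the normalizations (in particular the factor $r$ in $rR_X$ matching the exponent in $h_{R_X}^r$) and isolates exactly the non-Gorenstein discrepancy that the Nash/Fitting formalism is designed to absorb.
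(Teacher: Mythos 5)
Your proposal is correct and takes essentially the same route as the paper: the paper likewise evaluates $\widehat{\mathscr{I}}(\varphi_{c\cdot\mathfrak{a}})$ through the direct-image formula recorded after Theorem \ref{property} on a strong factorizing resolution that also principalizes $\mathfrak{a}$, and then reduces everything to the divisorial identity $(K_{\widetilde\Omega/\Omega}-rR_X)|_{\widetilde X}=\widehat K_{\widetilde X/X}-J_{\widetilde X/X}$, which is exactly Lemma 4.4 of \cite{Eisen10} that you invoke. The comparison of embedded versus intrinsic invariants you sketch via the conormal sequence is precisely the content of that lemma, so there is no gap.
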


\begin{remark} \label{measure_equi}
In fact, similar to the argument in the proof of Theorem \ref{MIS_equi}, we can derive the following characterization of Ohsawa's extension measure.

Let $(M,\omega)$ be a Hermitian manifold and $X\subset M$ a (closed) complex subspace of pure dimension $d$ with the volume element $dV_{X,\omega}=\frac{1}{d!}\omega^{d}|_{X_\emph{reg}}$. Let $\Psi\in\#_A(X)$ be a polar function on $M$ determined by the ideal sheaf $\mathscr{I}_X$. Then, there exists a locally bounded positive function $\rho$ on $X$ such that
$$dV_{X,\omega}[\Psi]=\rho\cdot\frac{dV_{X,\omega}}{|\mathcal{J}ac_X|^2},$$
where $|\mathcal{J}ac_X|^2$ is globally defined via a partition of unity on the coordinate charts covering of $X$.
\end{remark}

\subsection{Adjoint ideal sheaves along closed complex subspaces}

In \cite{Gue12}, Guenancia defined an analytic adjoint ideal sheaf associated to plurisubharmonic weight $\varphi$ along SNC divisors (but not coherent for general $\varphi$; see \cite{G-L_adjoint} for more details), and established the so-called adjunction exact sequence for the locally H\"older continuous plurisubharmonic weights and smooth divisors. Later, the author \cite{Li_adjoint} obtained a well-defined generalization of adjoint ideal sheaves to the analytic setting in the case of divisors. In this subsection, owing to our multiplier ideals introduced as above, we will make a reasonable version of the definition of analytic adjoint ideal sheaves along closed complex subspaces of higher codimension.

\begin{theorem} \label{adjoint} \emph{(Theorem \ref{Aadjoint}).}
Let $M$ be an $(n+r)$-dimensional complex manifold and $X\subset M$ a closed complex subspace of pure codimension $r$ with $g=(g_1,\dots,g_m)$ a system of generators of $\mathscr{I}_X$ near $x\in M$ ($m$ may depend on $x$) and $\varphi\in\emph{Psh}(M)$ such that $\varphi|_X\not\equiv-\infty$ on every irreducible component of $X$.

Then, there exists an ideal sheaf $$Adj_X(\varphi)\subset\mathcal{O}_M,$$
called the \emph{analytic adjoint ideal sheaf} associated to $\varphi$ along $X$, sitting in an exact sequence:
$$0\longrightarrow\mathscr{I}(\varphi+r\log|\mathscr{I}_X|)\stackrel{\iota}{\longrightarrow} Adj_X(\varphi)\stackrel{\rho}{\longrightarrow} i_*\widehat{\mathscr{I}}(\varphi|_X)\longrightarrow0, \eqno(\star)$$
where $i:X\hookrightarrow M,\ \iota$ and $\rho$ are the natural inclusion and restriction map respectively, and $$\log|\mathscr{I}_X|:=\log|g|=\frac{1}{2}\log(|g_1|^2+\cdots+|g_m|^2)$$ near every point $x\in X$.
\end{theorem}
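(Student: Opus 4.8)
The plan is to first give a measure-theoretic definition of $Adj_X(\varphi)$ for which the two outer terms of $(\star)$ acquire transparent meanings, and then to read off exactness by comparing an $L^2$ condition on the ambient tube around $X$ with the residue $L^2$ condition on $X$ that defines $\widehat{\mathscr{I}}(\varphi|_X)$. Writing $\Psi = r\log(|g_1|^2+\cdots+|g_m|^2) = 2\,r\log|\mathscr{I}_X|$ as in Definition \ref{MIS_NO}, I would declare $f\in Adj_X(\varphi)_x$ precisely when the shell integrals
$$I_f(t) := \frac{2r}{\sigma_{2r-1}}\int_M |f|^2 e^{-2\varphi}e^{-\Psi}\,\mathbbm{1}_{\{-t-1<\Psi<-t\}}\,dV_M$$
stay bounded as $t\to+\infty$ on a neighborhood of $x$. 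With this choice $\iota$ is tautological and $\rho$ is simply the restriction $f\mapsto f|_X$; the point is that $e^{-2\varphi}e^{-\Psi}dV_M$ has finite mass on a full tube $\{\Psi<-t_0\}$ exactly for $f\in\mathscr{I}(\varphi+r\log|\mathscr{I}_X|)$, whereas mere boundedness of $I_f(t)$ corresponds, through the very definition of the Ohsawa measure $dV_X[\Psi]$, to $f|_X\in\widehat{\mathscr{I}}(\varphi|_X)$.

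For well-definedness of $\rho$ and for coherence I would pass to a strong factorizing desingularization $\pi:\widetilde M\to M$ of $X$ with $\pi^*\mathscr{I}_X = \mathscr{I}_{\widetilde X}\cdot\mathcal{O}(-R_X)$ and $\widetilde X$ smooth, exactly as in the proof of Theorem \ref{property}. Setting $\psi = \varphi\circ\pi$, I expect to identify
$$Adj_X(\varphi) = \pi_*\!\left(\mathcal{O}(K_{\widetilde M/M}-rR_X)\otimes Adj_{\widetilde X}(\psi)\right),$$
where $Adj_{\widetilde X}(\psi)$ is the adjoint ideal along the \emph{smooth} subvariety $\widetilde X$. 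Under this identification the left-hand term of $(\star)$ becomes $\pi_*(\mathcal{O}(K_{\widetilde M/M}-rR_X)\otimes\mathscr{I}(\psi+r\log|\mathscr{I}_{\widetilde X}|)) = \mathscr{I}(\varphi+r\log|\mathscr{I}_X|)$, while the right-hand term becomes $\pi_*(\mathcal{O}(K_{\widetilde M/M}-rR_X)|_{\widetilde X}\otimes\mathscr{I}(\psi|_{\widetilde X})) = i_*\widehat{\mathscr{I}}(\varphi|_X)$ by the formula established for Theorem \ref{property} (on the smooth $\widetilde X$ the Ohsawa measure agrees with $dV_{\widetilde X}$ up to a bounded factor, by Remark \ref{measure_equi}). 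Coherence of $Adj_X(\varphi)$ then follows from coherence of multiplier ideals upstairs together with Grauert's direct-image theorem, and left- plus middle-exactness reduce to the corresponding statements on $\widetilde M$: a germ in $Adj_X(\varphi)$ lies in $\ker\rho$ iff it is tube-integrable iff it lies in $\mathscr{I}(\varphi+r\log|\mathscr{I}_X|)$.

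The crux is surjectivity of $\rho$, i.e. exactness of $(\star)$ on the right. I would reduce the smooth codimension-$r$ adjoint $Adj_{\widetilde X}(\psi)$ to the divisorial case by blowing up along $\widetilde X$, following Eisenstein's algebraic construction in \cite{Eisen10}, so that the extension problem becomes one across a \emph{smooth divisor}, governed by the Ohsawa-Takegoshi-Manivel theorem as in Guenancia \cite{Gue12} and in the author's \cite{Li_adjoint}. Pushing the divisorial adjunction sequence back down, surjectivity on $M$ then amounts either to the vanishing of the connecting map into $R^1\pi_*(\mathcal{O}(K_{\widetilde M/M}-rR_X)\otimes\mathscr{I}(\psi+r\log|\mathscr{I}_{\widetilde X}|))$ --- a local Nadel-type vanishing --- or, more directly, to an Ohsawa-Takegoshi $L^2$ extension with respect to the Ohsawa measure $dV_X[\Psi]$ in the form of \cite{Ohsawa5, G-Z_optimal}: every $h\in\widehat{\mathscr{I}}(\varphi|_X)_x$ admits a holomorphic extension $F$ near $x$ with $\limsup_t I_F(t)<\infty$, hence $F\in Adj_X(\varphi)_x$ and $\rho(F)=h$.

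The main obstacle I anticipate is precisely this norm-matching in the extension step. Lifting the germ $h$ to a holomorphic $F$ on $M$ is automatic since $\mathcal{O}_{X,x}$ is a quotient of $\mathcal{O}_{M,x}$; what is delicate is guaranteeing that $F$ can be chosen with \emph{bounded} shell integrals $I_F(t)$. For a continuous weight this would follow by approximation from the defining inequality of $dV_X[\Psi]$, but the upper-semicontinuity of a genuine plurisubharmonic $\varphi$ (and the blow-up of $e^{-2\varphi}$) points the comparison in the wrong direction, so a soft continuity argument fails and one must invoke the $L^2$ extension theorem with the singular weight built in. Finally, to check that $Adj_X(\varphi)$ is independent of the generators $g$ and of the embedding, and that it recovers the Takagi--Eisenstein adjoint ideal when $\varphi$ has analytic singularities, I would compare the above pushforward descriptions across two desingularizations via a common log-resolution, mirroring the proof of Proposition 2.11 in \cite{Gue12}.
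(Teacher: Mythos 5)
There is a genuine gap, and it sits exactly where the known difficulty of this theorem lies: in the definition of $Adj_X(\varphi)$ for the base case of a \emph{smooth} $X$. Your proposal declares $f\in Adj_X(\varphi)_x$ when the shell integrals $I_f(t)$ stay bounded, and asserts that this boundedness ``corresponds, through the very definition of the Ohsawa measure,'' to $f|_X\in\widehat{\mathscr{I}}(\varphi|_X)$, while $\ker\rho$ is identified with $\mathscr{I}(\varphi+r\log|\mathscr{I}_X|)$. Neither identification is justified: the measure $dV_X[\Psi]$ is defined as the \emph{minimal} measure dominating the limsup of shell integrals tested against continuous functions supported off $X_{\mathrm{sing}}$, so boundedness of $I_f(t)$ for a single $f$ neither implies nor is implied by $\int_X|f|^2e^{-2\varphi}dV_X[\Psi]<\infty$ in any direct way; and boundedness of shells is strictly weaker than tube-integrability, so middle exactness does not ``reduce to the corresponding statement upstairs'' --- it is precisely the step that fails. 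Indeed, your definition is essentially the unregularized adjoint ideal of Guenancia and Kim \cite{Gue12, Kim15}, and the paper explicitly cites the counterexample of \cite{G-L_adjoint} showing that this object is \emph{not} coherent and does \emph{not} sit in the adjunction sequence for general plurisubharmonic $\varphi$. The paper's actual construction in the smooth case contains two corrections that your proposal omits and that are the real content of the proof: one takes
$$Adj_X(\varphi):=\bigcup_{\varepsilon>0}Adj_X^0\big((1+\varepsilon)\varphi\big)\cap\big(\mathscr{J}+\mathscr{I}_X\big),$$
where $Adj_X^0(\varphi)$ requires local integrability of $|f|^2e^{-2\varphi}/(|g|^{2r}\log^2|g|)$ and $\mathscr{J}$ is any ideal sheaf with $\mathscr{J}|_X=\widehat{\mathscr{I}}(\varphi|_X)$. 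The union over $\varepsilon$ lets strong openness absorb the logarithmic damping and rescues middle exactness (which is false for $\varepsilon=0$), and the intersection with $\mathscr{J}+\mathscr{I}_X$ forces the image of $\rho$ to land exactly in $\widehat{\mathscr{I}}(\varphi|_X)$ rather than in the a priori larger ideal cut out by an $L^2$ condition. Without these, the difficulty you correctly flag at the end (the ``norm-matching'' for upper-semicontinuous weights) is not merely an obstacle to one step --- it is a disproof of the exactness of the sequence built on your definition.

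The remainder of your architecture does match the paper: for singular $X$ one passes to a strong factorizing desingularization $\pi$ with $\pi^*\mathscr{I}_X=\mathscr{I}_{\widetilde X}\cdot\mathscr{I}_{R_X}$, sets $Adj_X(\varphi)=\pi_*\big(Adj_{\widetilde X}(\varphi\circ\pi)\otimes\mathcal{O}(K_{\widetilde M/M}-rR_X)\big)$, and obtains right-exactness downstairs from a local vanishing theorem (the paper invokes Corollary 1.5 of \cite{Matsu_morphism}); surjectivity in the smooth case does rest on Ohsawa--Takegoshi-type extension with the singular weight \cite{Ohsawa5, G-Z_optimal}, applied after the $\varepsilon$-regularization. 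Your suggestion to further reduce the smooth codimension-$r$ case to a divisor by blowing up along $\widetilde X$ \`a la Eisenstein \cite{Eisen10} is a legitimate variant the paper does not use, but it does not repair the base-case definition.
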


\noindent{\emph{Sketch of the proof}.} As the statement is local, without loss of generality, we may assume that $M$ is a bounded Stein domain in $\mathbb{C}^{n+r}$, $g=(g_1,\dots,g_m)$ is a system of generators of $\mathscr{I}_X$.\\

\textbf{Case (i).} \emph{When $X\subset M$ is a closed complex submanifold of pure codimension $r$.}

Let $\mathscr{J}\subset\mathcal{O}_M$ be an ideal sheaf such that $\mathscr{J}|_X=\widehat{\mathscr{I}}(\varphi|_X)$, which implies that $\mathscr{J}+\mathscr{I}_X$ is independent of the choice of $\mathscr{J}$. Set $$Adj_X(\varphi):=\bigcup\limits_{\varepsilon>0}Adj_X^0\Big((1+\varepsilon)\varphi\Big)\cap\Big(\mathscr{J}+\mathscr{I}_X\Big),$$ where $Adj_X^0(\varphi)\subset \mathcal{O}_M$ is an ideal sheaf of germs of holomorphic functions $f\in\mathcal{O}_{M,x}$ such that $$\frac{|f|^{2}e^{-2\varphi}}{|g|^{2r}\log^2|g|}$$ is locally integrable with respect to the Lebesgue measure near $x$ on $M$. \\

\textbf{Case (ii).} \emph{When $X\subset M$ is a closed complex subspace with singularities.}

Let $\pi:\widetilde M\to M$ be a strong factorizing desingularization of $X$ such that $\pi^*(\mathscr{I}_X)=\mathscr{I}_ {\widetilde X}\cdot\mathscr{I}_{R_X}$, where $\widetilde X$ is the strict transform of $X$ in $\widetilde M$ and $R_X$ is an effective divisor supported on $\text{Ex}(\pi)$. Thanks to Case (i), we have the following adjunction exact sequence
$$0\longrightarrow\mathscr{I}(\varphi\circ\pi+r\log|\mathscr{I}_{\widetilde X}|)\longrightarrow Adj_{\widetilde X}(\varphi\circ\pi)\longrightarrow i_*\widehat{\mathscr{I}}(\varphi\circ\pi|_{\widetilde X})\longrightarrow0.$$
Twist the exact sequence by $\mathcal{O}_{\widetilde M}(K_{\widetilde M/M}-rR_X)$, and then we deduce that
\begin{equation*}
\begin{split}
0\longrightarrow\mathcal{O}_{\widetilde M}(K_{\widetilde M/M})\otimes\mathscr{I}(\varphi\circ\pi+r\log|\mathscr{I}_{X}|\circ\pi)&\longrightarrow Adj_{\widetilde X}(\varphi\circ\pi)\otimes\mathcal{O}_{\widetilde M}(K_{\widetilde M/M}-rR_X)\\
&\longrightarrow i_*\mathscr{I}(\varphi\circ\pi|_{\widetilde X})\otimes\mathcal{O}_{\widetilde M}(K_{\widetilde M/M}-rR_X)\longrightarrow0.
\end{split}
\end{equation*}
Then, we can take $$Adj_X(\varphi):=\pi_*\left(Adj_{\widetilde X}(\varphi\circ\pi)\otimes\mathcal{O}_{\widetilde M}(K_{\widetilde M/M}-rR_X)\right).$$
Here, in order to establish the exact sequence $(\star)$, a local vanishing theorem is necessary (cf. \cite{Matsu_morphism}, Corollary 1.5).

\begin{remark}
$(1)$ The adjunction exact sequence $(\star)$ yields the coherence and uniqueness (independent of the desingularization $\pi$) of $Adj_X(\varphi)$.

$(2)$ If $X\subset M$ is a complex hypersurface, we can also get the following short exact sequence by a similar discussion as in Proposition 9.3.48 in \cite{La04}:
$$0\longrightarrow\mathcal{O}_M(K_M)\longrightarrow\mathcal{O}_M(K_M+X)\otimes Adj_X(\varphi)\longrightarrow\pi_*\mathcal{O}_{\widetilde X}(K_{\widetilde{X}})\longrightarrow0,$$
where $\pi:\widetilde M\to M$ is an embedded resolution of $(M,X)$ such that the proper transform $\widetilde X\subset\widetilde M$ of $X$ is non-singular.

$(3)$ When $M$ is an algebraic variety over $\mathbb{C}$, $X\subset M$ is a $\mathbb{Q}$-Gorenstein reduced equidimensional subscheme and $\varphi$ has analytic singularities, the above result is nothing but the main theorem (Theorem 5.1) in \cite{Eisen10}, which answered a question of Takagi in \cite{Taka10}.
\end{remark}

\subsection{The restriction theorem of multiplier ideals}

This part is devoted to discuss the relation of multiplier ideal sheaf $\widehat{\mathscr{I}}(\varphi|_X)$ and the restriction of $Adj_X(\varphi)$ and $\mathscr{I}(\varphi)$ to $X$. Concretely, we can prove the following:

\begin{theorem} \label{restriction}
With hypotheses as above, it follows that
$$\widehat{\mathscr{I}}(\varphi|_X)\subset\mathscr{I}\big(\varphi+(r-\delta)\log|g|\big)\cdot\mathcal{O}_X$$ for any $0<\delta\leq r$;
and
$$\widehat{\mathscr{I}}(\varphi|_X)=Adj_X(\varphi)\cdot\mathcal{O}_X.$$
\end{theorem}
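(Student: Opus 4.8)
The plan is to treat the two assertions in reverse logical order: the identity $\widehat{\mathscr{I}}(\varphi|_X)=Adj_X(\varphi)\cdot\mathcal{O}_X$ carries the real content, and the inclusion into $\mathscr{I}(\varphi+(r-\delta)\log|g|)\cdot\mathcal{O}_X$ will fall out of it formally. The single analytic input is the Ohsawa--Takegoshi $L^2$ extension theorem taken with respect to Ohsawa's extension measure $dV_X[\Psi]$, $\Psi=r\log(|g_1|^2+\cdots+|g_m|^2)$, in the form associated with this measure (\cite{Ohsawa5,G-Z_optimal}); its effect is that an $L^2$ datum on $X$ measured against $dV_X[\Psi]$ extends to an ambient germ whose natural $L^2$ weight is exactly the adjoint weight $|g|^{2r}\log^2|g|$ entering the definition of $Adj_X^0$. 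As in the construction of $Adj_X(\varphi)$, I would first settle the case where $X\subset M$ is a closed submanifold and then descend the general singular case through the strong factorizing desingularization $\pi:\widetilde M\to M$ used throughout, along which $\pi^*(\mathscr{I}_X)=\mathscr{I}_{\widetilde X}\cdot\mathscr{I}_{R_X}$ with $\widetilde X$ smooth.

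In the smooth case I would prove the two inclusions of $\widehat{\mathscr{I}}(\varphi|_X)=Adj_X(\varphi)\cdot\mathcal{O}_X$ separately. The inclusion ``$\subseteq$'' is built into the definition: since $Adj_X(\varphi)\subset\mathscr{J}+\mathscr{I}_X$ with $\mathscr{J}|_X=\widehat{\mathscr{I}}(\varphi|_X)$, restricting to $X$ gives $Adj_X(\varphi)\cdot\mathcal{O}_X\subset(\mathscr{J}+\mathscr{I}_X)\cdot\mathcal{O}_X=\widehat{\mathscr{I}}(\varphi|_X)$. For the surjectivity ``$\supseteq$'' I would take a germ $h\in\widehat{\mathscr{I}}(\varphi|_X)_x$; by the strong openness of $\widehat{\mathscr{I}}(\varphi|_X)$ (Theorem \ref{property}) one has $\int_X|h|^2e^{-2(1+\varepsilon)\varphi}\,dV_X[\Psi]<+\infty$ for some $\varepsilon>0$. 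Applying the extension theorem with weight $(1+\varepsilon)\varphi$ produces $\tilde h\in\mathcal{O}_{M,x}$ with $\tilde h|_X=h$ and $|\tilde h|^2e^{-2(1+\varepsilon)\varphi}/(|g|^{2r}\log^2|g|)$ locally integrable, i.e. $\tilde h\in Adj_X^0\big((1+\varepsilon)\varphi\big)$; and since $\tilde h|_X=h\in\widehat{\mathscr{I}}(\varphi|_X)=\mathscr{J}|_X$ we also get $\tilde h\in\mathscr{J}+\mathscr{I}_X$, whence $\tilde h\in Adj_X(\varphi)$ and $h=\tilde h|_X\in Adj_X(\varphi)\cdot\mathcal{O}_X$. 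The role of the $(1+\varepsilon)$--margin, and hence of the union defining $Adj_X(\varphi)$, is precisely to guarantee finiteness of the extended norm and membership in $Adj_X^0$.

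The inclusion (1) then drops out. A denominator comparison near $X$ gives $1/|g|^{2(r-\delta)}\le C/(|g|^{2r}\log^2|g|)$ for $0<\delta\le r$, so, after normalizing $\varphi\le0$ (so that $e^{-2\varphi}\le e^{-2(1+\varepsilon)\varphi}$), every $Adj_X^0\big((1+\varepsilon)\varphi\big)$ lies in $\mathscr{I}(\varphi+(r-\delta)\log|g|)$; taking the union over $\varepsilon$ yields $Adj_X(\varphi)\subset\mathscr{I}(\varphi+(r-\delta)\log|g|)$ as ideals in $\mathcal{O}_M$, and restricting this to $X$ together with (2) gives $\widehat{\mathscr{I}}(\varphi|_X)=Adj_X(\varphi)\cdot\mathcal{O}_X\subset\mathscr{I}(\varphi+(r-\delta)\log|g|)\cdot\mathcal{O}_X$. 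To pass from the smooth to the singular case I would push the smooth short exact sequence on $\widetilde M$ forward by $\pi$ after twisting by $\mathcal{O}_{\widetilde M}(K_{\widetilde M/M}-rR_X)$, exactly as in the construction of $Adj_X(\varphi)$, using that $dV_X[\Psi]$ is, up to equivalence, the direct image of $|\text{Jac}(\pi)/h_{R_X}^r|^2\,dV_{\widetilde X}$ (Remark \ref{computation}) so that the two $L^2$ conditions correspond under $\pi$.

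The hard part will be the singular case, and within it the right-exactness of this pushforward. Surjectivity of the restriction $\rho$ is immediate upstairs on the smooth $\widetilde X$, but to retain it after applying $\pi_*$ to the twisted sequence one needs the vanishing of the relevant higher direct image $R^{1}\pi_*\big(\mathcal{O}_{\widetilde M}(K_{\widetilde M/M}-rR_X)\otimes\mathscr{I}(\varphi\circ\pi+r\log|\mathscr{I}_{\widetilde X}|)\big)$, i.e. the local vanishing theorem (cf. \cite{Matsu_morphism}, Corollary 1.5) already invoked for $Adj_X(\varphi)$. The accompanying bookkeeping --- matching $dV_X[\Psi]$ with the measure on $\widetilde X$ across $X_\text{sing}$, and verifying that the extension produced on $X_\text{reg}$ (equivalently on $\widetilde X$) genuinely lifts to an ambient germ in $Adj_X(\varphi)$ --- is the technical heart, whereas the two inclusions themselves are essentially formal once the sharp extension landing in $Adj_X^0$ is in hand.
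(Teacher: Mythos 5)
Your argument is correct, but it is organized quite differently from the paper's, which is essentially a two-line deduction from results already in place. For the identity $\widehat{\mathscr{I}}(\varphi|_X)=Adj_X(\varphi)\cdot\mathcal{O}_X$, the paper simply tensors the adjunction exact sequence $(\star)$ of Theorem \ref{adjoint} with $\mathcal{O}_X$ and invokes right exactness: since $\rho$ is the restriction map and is surjective onto $i_*\widehat{\mathscr{I}}(\varphi|_X)$, the identity is immediate. You instead re-derive the surjectivity of $\rho$ from scratch --- extending a germ $h\in\widehat{\mathscr{I}}(\varphi|_X)_x$ via strong openness and the Ohsawa--Takegoshi/Guan--Zhou theorem with respect to $dV_X[\Psi]$ so that the extension lands in $Adj_X^0\big((1+\varepsilon)\varphi\big)\cap(\mathscr{J}+\mathscr{I}_X)$, then descending through the strong factorizing desingularization with the local vanishing theorem. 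That is correct, but it duplicates the content of Theorem \ref{adjoint}, which the hypothesis ``with hypotheses as above'' makes available; the only genuinely new analytic input in your write-up is the same one already consumed in constructing $(\star)$. For the first inclusion, the paper argues directly: strong openness plus a local version of Theorem 2.2 of \cite{G-Z_optimal} applied with weights $(1+\varepsilon)\varphi$, bypassing $Adj_X(\varphi)$ entirely. Your route --- deducing it from $Adj_X(\varphi)\subset\mathscr{I}\big(\varphi+(r-\delta)\log|g|\big)$ (via the elementary bound $|g|^{2\delta}\log^2|g|\leq C$ near $X$ and a local normalization $\varphi\leq0$) together with the restriction identity --- is precisely the alternative recorded in the remark immediately following the theorem, so it is fully legitimate. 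In short: no gap, the same extension theorem drives everything, but you prove the second assertion the hard way and the first assertion the way the paper's remark suggests rather than the way its proof does; the only place where you should be careful is that your singular-case bookkeeping is exactly the proof of Theorem \ref{adjoint}, so you should either cite $(\star)$ and be done, or accept that you are reproving it.
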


\begin{proof}
The first inclusion is a direct consequence of Theorem \ref{property} (strong openness) and a local version of Theorem 2.2 in \cite{G-Z_optimal}, with weights $(1+\varepsilon)\varphi$.

Twist the exact sequence $(\star)$ through by $\mathcal{O}_X$, and then we deduce $Adj_X(\varphi)\cdot\mathcal{O}_X=\widehat{\mathscr{I}}(\varphi|_X)$ from the right exactness of tensor functor.
\end{proof}

\begin{remark}
Note that $Adj_X(\varphi)\subset\mathscr{I}\big(\varphi+(r-\delta)\log|g|\big)$ for any $0<\delta\leq r$ by the definition of $Adj_X(\varphi)$. Then, we can also infer the first inclusion from $\widehat{\mathscr{I}}(\varphi|_X)=Adj_X(\varphi)\cdot\mathcal{O}_X$.
\end{remark}
\bigskip

\section{Extension theorems on K\"ahler manifolds from singular hypersurfaces}

\subsection{Extension theorems for adjoint ideals on K\"ahler manifolds}

In this subsection, we turn to obtain a global extension theorem related to adjoint ideals on K\"ahler manifolds from singular hypersuraces by combining the adjunction exact sequence $(\star)$ with the Nadel vanishing theorem.\medskip

\noindent{\textbf{\emph{Proof of Theorem} \ref{vanishing_noncpt}.}} It follows from the adjunction formula for the hypersurface $H$ that $\omega_H=\big(\omega_M\otimes\mathcal{O}_M([H])\big)|_H$. Then, we consider the long exact sequence of cohomology associated to the short exact sequence $(\star)$ twisted by $\omega_M\otimes\mathcal{O}_M(L\otimes[H])$, i.e.,
\begin{equation*}
\begin{split}
0&\longrightarrow\omega_M\otimes\mathcal{O}_M(L)\otimes\mathscr{I}(\varphi)\longrightarrow\omega_M\otimes\mathcal{O}_M(L\otimes[H])\otimes Adj_H(\varphi)\\
&\longrightarrow i_*\big(\omega_H\otimes\mathcal{O}_H(L|_H)\otimes\widehat{\mathscr{I}}(\varphi|_H)\big)\longrightarrow0.
\end{split}
\end{equation*}
Thereupon, the desired result follows immediately from the Nadel vanishing theorem.
\hfill $\Box$\\

In addition, if $M$ is compact and $(L,e^{-2\varphi})$ is only a pseudo-effective line bundle (not necessarily big), by replacing the Nadel vanishing theorem in the proof of above theorem by a Kawamata--Viehweg--Nadel-type vanishing theorem established in \cite{Cao, G-Z_open}, i.e., $$H^q\big(M,\omega_M\otimes\mathcal{O}_M(L)\otimes\mathscr{I}(\varphi)\big)=0$$
for all $q\geq n-\text{nd}(L,\varphi)+1$, where $\text{nd}(L,\varphi)$ is the numerical dimension of $(L,\varphi)$ as defined in \cite{Cao}, we can obtain the following:

\begin{theorem} \label{vanishing_cpt}
Let $M$ be a compact K\"ahler manifold of dimension $n$ and $H\subset M$ a (reduced) complex hypersurface. Let $(L,e^{-2\varphi})$ be a pseudo-effective line bundle on $M$ such that $\varphi|_H\not\equiv-\infty$ on each irreducible component of $H$. Then, the natural restriction map induces a surjection
$$H^{q-1}\big(M,\omega_M\otimes\mathcal{O}_M(L\otimes[H])\otimes Adj_H(\varphi)\big)\longrightarrow H^{q-1}\big(H,\omega_H\otimes\mathcal{O}_H(L|_H)\otimes\widehat{\mathscr{I}}(\varphi|_H)\big),$$
and
$$H^q\big(M,\omega_M\otimes\mathcal{O}_M(L\otimes[H])\otimes Adj_H(\varphi)\big)\widetilde{\longrightarrow}H^q\big(H,\omega_H\otimes\mathcal{O}_H(L|_H)\otimes\widehat{\mathscr{I}}(\varphi|_H)\big)$$
for all $q\geq n-\emph{nd}(L,\varphi)+1$.
\end{theorem}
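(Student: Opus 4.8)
The plan is to mirror the argument already used for Theorem \ref{vanishing_noncpt}, simply feeding in the numerical-dimension vanishing theorem of \cite{Cao, G-Z_open} in place of the Nadel vanishing theorem. Concretely, I would take the adjunction exact sequence $(\star)$ of Theorem \ref{adjoint} for the reduced hypersurface $H$ (where the codimension is $r=1$) and tensor it by the line bundle $\omega_M\otimes\mathcal{O}_M(L\otimes[H])$. Since tensoring by a line bundle is exact, the result stays a short exact sequence of coherent sheaves, and identifying its three terms is the first task.

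For the quotient term I would apply the projection formula together with the adjunction isomorphism $\omega_H=\big(\omega_M\otimes\mathcal{O}_M([H])\big)|_H$, which gives
$$i_*\widehat{\mathscr{I}}(\varphi|_H)\otimes\big(\omega_M\otimes\mathcal{O}_M(L\otimes[H])\big)=i_*\big(\omega_H\otimes\mathcal{O}_H(L|_H)\otimes\widehat{\mathscr{I}}(\varphi|_H)\big).$$
For the sub-term I would use the elementary identity $\mathcal{O}_M([H])\otimes\mathscr{I}(\varphi+\log|\mathscr{I}_H|)=\mathscr{I}(\varphi)$, realised by multiplication with the canonical section $s_H$ of $\mathcal{O}_M([H])$ cutting out $H$ (note $e^{-2\log|\mathscr{I}_H|}=|s_H|^{-2}$). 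Thus the twisted $(\star)$ becomes, writing $\mathcal{A}=\omega_M\otimes\mathcal{O}_M(L)\otimes\mathscr{I}(\varphi)$, $\mathcal{B}=\omega_M\otimes\mathcal{O}_M(L\otimes[H])\otimes Adj_H(\varphi)$ and $\mathcal{C}=\omega_H\otimes\mathcal{O}_H(L|_H)\otimes\widehat{\mathscr{I}}(\varphi|_H)$,
$$0\longrightarrow\mathcal{A}\longrightarrow\mathcal{B}\longrightarrow i_*\mathcal{C}\longrightarrow0,$$
which is exactly the sequence appearing in the proof of Theorem \ref{vanishing_noncpt}.

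Next I would pass to the long exact sequence in cohomology on the compact K\"ahler manifold $M$, using $H^\bullet(M,i_*\mathcal{C})=H^\bullet(H,\mathcal{C})$ since $i$ is a closed immersion. The only non-formal ingredient is the Kawamata--Viehweg--Nadel-type vanishing
$$H^q\big(M,\mathcal{A}\big)=H^q\big(M,\omega_M\otimes\mathcal{O}_M(L)\otimes\mathscr{I}(\varphi)\big)=0\quad\text{for all }q\geq n-\mathrm{nd}(L,\varphi)+1,$$
valid because $(L,e^{-2\varphi})$ is pseudo-effective on compact K\"ahler $M$. Fixing $q\geq n-\mathrm{nd}(L,\varphi)+1$, both $H^q(M,\mathcal{A})$ and $H^{q+1}(M,\mathcal{A})$ vanish; the segment $H^{q-1}(M,\mathcal{B})\to H^{q-1}(H,\mathcal{C})\to H^q(M,\mathcal{A})=0$ then forces the restriction map on $H^{q-1}$ to be surjective, while the segment $0=H^q(M,\mathcal{A})\to H^q(M,\mathcal{B})\to H^q(H,\mathcal{C})\to H^{q+1}(M,\mathcal{A})=0$ forces the restriction map on $H^q$ to be an isomorphism.

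I expect no serious obstacle here: the genuine work is already carried out in the construction of $(\star)$ (Theorem \ref{adjoint}) and in the cited vanishing theorem, so the present statement is a formal consequence. The only points demanding care are the bookkeeping of cohomological degrees --- one must note that the single condition $q\geq n-\mathrm{nd}(L,\varphi)+1$ simultaneously kills $H^q(M,\mathcal{A})$ and $H^{q+1}(M,\mathcal{A})$, which is precisely what makes both the surjection and the isomorphism fall out of one range --- and verifying that the hypotheses of the theorem of \cite{Cao, G-Z_open} are met, namely that $\varphi$ is a pseudo-effective weight on a compact K\"ahler manifold and that the numerical dimension $\mathrm{nd}(L,\varphi)$ is well defined.
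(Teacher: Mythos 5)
Your proposal is correct and follows exactly the paper's route: the paper likewise twists the adjunction sequence $(\star)$ by $\omega_M\otimes\mathcal{O}_M(L\otimes[H])$, identifies the kernel term as $\omega_M\otimes\mathcal{O}_M(L)\otimes\mathscr{I}(\varphi)$, and reads off both maps from the long exact sequence after substituting the Kawamata--Viehweg--Nadel-type vanishing of \cite{Cao, G-Z_open} for the Nadel vanishing used in Theorem \ref{vanishing_noncpt}. Your degree bookkeeping (one range $q\geq n-\mathrm{nd}(L,\varphi)+1$ killing both $H^q$ and $H^{q+1}$ of the kernel) matches what the paper leaves implicit.
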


\begin{remark}
In particular, if $H\subset M$ is a smooth complex hypersurface in Theorem \ref{vanishing_cpt}, then we have
$$H^q\big(M,\omega_M\otimes\mathcal{O}_M(L\otimes[H])\otimes Adj_H(\varphi)\big)=0$$
for all $q\geq n-\text{nd}(L,\varphi)+1$, by the corresponding vanishing theorems.
\end{remark}

Combining Theorem \ref{vanishing_noncpt} with Siu's construction of the metric in \cite{Siu98, Siu04} (see also \cite{Varolin08_extension}), we can derive the following generalization of Takayama's extension theorem from singular hypersurface.

\begin{theorem} \emph{(Takayama's extension theorem, \cite{Taka06}).} \label{Takayama}
Let $M$ be a smooth complex projective variety and $H\subset M$ a (reduced) hypersurface. Let $L$ be a holomorphic line bundle over $M$ equipped with a (possibly singular) Hermitian metric $e^{-2\varphi_L}$ such that $\sqrt{-1}\partial\bar\partial\varphi_L\geq\varepsilon\omega$ with $\varepsilon>0$ for some smooth Hermitian metric $\omega$ on $M$ and $\widehat{\mathscr{I}}(\varphi_L|_H)=\mathcal{O}_H$.

Then, the natural restriction map
$$H^0\big(M,\big(\omega_M\otimes\mathcal{O}_M(L\otimes[H])\big)^{\otimes m}\big)\longrightarrow H^0\big(H,\big(\omega_H\otimes\mathcal{O}_H(L)\big)^{\otimes m}\big)$$ is surjective for every $m > 0$.
\end{theorem}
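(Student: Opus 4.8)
The plan is to induct on $m$, at each stage peeling off one copy of $\omega_M\otimes\mathcal{O}_M([H])$ and applying Theorem \ref{vanishing_noncpt} to an auxiliary line bundle whose metric is built, \`a la Siu, from the sections already extended at the previous stage. Since $M$ is smooth projective it is weakly pseudoconvex K\"ahler, so Theorem \ref{vanishing_noncpt} applies. Write $F_m:=\big(\omega_M\otimes\mathcal{O}_M(L\otimes[H])\big)^{\otimes m}$ and set $L_m:=\big(\omega_M\otimes\mathcal{O}_M(L\otimes[H])\big)^{\otimes(m-1)}\otimes L$, so that $F_m=\omega_M\otimes\mathcal{O}_M(L_m\otimes[H])$. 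By the adjunction formula $\omega_H=\big(\omega_M\otimes\mathcal{O}_M([H])\big)|_H$, restriction to $H$ gives both $F_m|_H=(\omega_H\otimes\mathcal{O}_H(L|_H))^{\otimes m}$ and $\omega_H\otimes\mathcal{O}_H(L_m|_H)=(\omega_H\otimes\mathcal{O}_H(L|_H))^{\otimes m}$. Thus Theorem \ref{vanishing_noncpt}, applied to $(L_m,\varphi_m)$ for a suitable weight $\varphi_m$, will produce a surjection onto $H^0\big(H,(\omega_H\otimes L)^{\otimes m}\otimes\widehat{\mathscr{I}}(\varphi_m|_H)\big)$ whose source maps into $H^0(M,F_m)$ because $Adj_H(\varphi_m)\subset\mathcal{O}_M$; it then remains only to arrange that $\varphi_m$ has strictly positive curvature and that the target contains every section we wish to extend.

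For the base case $m=1$ I take $\varphi_1:=\varphi_L$. The hypotheses $\sqrt{-1}\partial\bar\partial\varphi_L\geq\varepsilon\omega$ and $\widehat{\mathscr{I}}(\varphi_L|_H)=\mathcal{O}_H$ feed directly into Theorem \ref{vanishing_noncpt}, yielding the surjectivity of $H^0\big(M,\omega_M\otimes\mathcal{O}_M(L\otimes[H])\otimes Adj_H(\varphi_L)\big)\to H^0\big(H,\omega_H\otimes\mathcal{O}_H(L|_H)\big)$; composing with the inclusion $Adj_H(\varphi_L)\hookrightarrow\mathcal{O}_M$ gives the claim for $m=1$. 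For the inductive step, assume surjectivity at level $m-1$, and choose extensions $\tilde\sigma_1,\dots,\tilde\sigma_N\in H^0(M,F_{m-1})$ of a basis $\sigma_1,\dots,\sigma_N$ of $H^0\big(H,(\omega_H\otimes L)^{\otimes(m-1)}\big)$. Since $F_{m-1}=\big(\omega_M\otimes\mathcal{O}_M(L\otimes[H])\big)^{\otimes(m-1)}$, Siu's recipe $\psi_{m-1}:=\frac{1}{2}\log\sum_j|\tilde\sigma_j|^2$ defines a (possibly singular) metric of semi-positive curvature on that bundle, and $\varphi_m:=\psi_{m-1}+\varphi_L$ is a metric on $L_m$ with $\sqrt{-1}\partial\bar\partial\varphi_m\geq\sqrt{-1}\partial\bar\partial\varphi_L\geq\varepsilon\omega>0$, so the curvature hypothesis of Theorem \ref{vanishing_noncpt} persists. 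Given $u\in H^0\big(H,(\omega_H\otimes L)^{\otimes m}\big)$, I must check that $u$ lies in $H^0\big(H,(\omega_H\otimes L)^{\otimes m}\otimes\widehat{\mathscr{I}}(\varphi_m|_H)\big)$; granting this, Theorem \ref{vanishing_noncpt} extends $u$ to a section of $\omega_M\otimes\mathcal{O}_M(L_m\otimes[H])\otimes Adj_H(\varphi_m)\subset F_m$, closing the induction.

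The main obstacle is precisely the membership $u\in\widehat{\mathscr{I}}(\varphi_m|_H)\cdot(\omega_H\otimes L)^{\otimes m}$. Away from the base locus $B:=\bigcap_j\{\sigma_j=0\}$ of $|(\omega_H\otimes L)^{\otimes(m-1)}|$, the quotient $|u|^2e^{-2\psi_{m-1}|_H}$ behaves like $|u|^2/\sum_j|\sigma_j|^2$, a locally bounded weight for $\omega_H\otimes L$; hence the integrability of $|u|^2e^{-2\varphi_m|_H}$ against the Ohsawa measure $dV_H[\Psi]$ reduces there to the local integrability of $e^{-2\varphi_L|_H}$, which is exactly the content of $\widehat{\mathscr{I}}(\varphi_L|_H)=\mathcal{O}_H$. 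The difficulty is therefore concentrated on $B$, where $u$ need not lie in the $\mathcal{O}_H$-module generated by the $\sigma_j$ and the quotient can blow up.

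I expect this base-locus phenomenon, rather than the (by now routine) invocation of Theorem \ref{vanishing_noncpt}, to be the substantive step. It is resolved by Siu's self-referential device: one builds the metric at level $m$ not merely from the $\tilde\sigma_j$ but incorporating (high powers of) the candidate extension of $u$ itself, and passes to the limit over the multiplicity, the strict positivity $\varepsilon\omega$ supplying the multiplicity-independent $L^2$ bound that both keeps the curvature positive and makes the estimate uniform; in the limit $u$ is absorbed into the multiplier ideal across $B$. Two points of technical care accompany this: first, one must ensure $\varphi_m|_H\not\equiv-\infty$ on every irreducible component of $H$ (a component carrying no sections of the relevant power must be treated separately) so that the Nadel--Ohsawa ideal on the possibly singular hypersurface $H$ is well behaved; second, one must check compatibility of the weight decomposition $\varphi_m|_H=\psi_{m-1}|_H+\varphi_L|_H$ with the measure $dV_H[\Psi]$ playing the role of the canonical factor $\omega_H$ on $H_{\mathrm{reg}}$.
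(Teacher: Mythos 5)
Your proposal follows the same route the paper indicates: an induction on $m$ that feeds Siu-type metrics $\varphi_m=\tfrac12\log\sum_j|\tilde\sigma_j|^2+\varphi_L$ built from previously extended sections into Theorem \ref{vanishing_noncpt}, with the base-locus obstruction handled by Siu's limiting device (extending high powers of $u$ and using the strict positivity $\varepsilon\omega$ for multiplicity-independent $L^2$ bounds). This matches the paper's stated proof, which is exactly ``combine Theorem \ref{vanishing_noncpt} with Siu's construction of the metric,'' and you correctly isolate the one genuinely delicate step.
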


\subsection{Siu's extension theorem and deformation invariance of plurigenera}

Let $M$ be a complex manifold and $\pi:M\to\Delta=\{t\in\mathbb{C}\,|\,|t|<1\}$ a proper surjective holomorphic map with reduced analytic fibers $M_t=\pi^{-1}(t)$. The holomorphic family $\pi:M\to\Delta$ is called \emph{projective} if there is a positive holomorphic line bundle over $M$. Then, we are able to establish the following singular version of extension theorem for projective family due to Siu \cite{Siu98, Siu02, Siu04} (see also \cite{Paun07}),

\begin{theorem} \label{Siu_extension}
Let $\pi:M\to\Delta$ be a projective family and $L$ a holomorphic line bundle over $M$ endowed with a (possibly singular) Hermitian metric $e^{-2\varphi_L}$ such that $\sqrt{-1}\partial\bar{\partial}\varphi_L\geq0$. Assume that the restriction of $\varphi_L$ to the central fiber $M_0$ is well defined and $M_0$ has at most log terminal singularities.

Then, the natural restriction map
$$H^0\big(M,\omega_M^{\otimes m}\otimes\mathcal{O}_M(L)\big)\longrightarrow H^0\big(M_{0},\omega_{M_0}^{\otimes m}\otimes\mathcal{O}_{M_0}(L)\otimes\widehat{\mathscr{I}}(\varphi_L|_{M_0})\big)$$ is surjective for every $m > 0$.
\end{theorem}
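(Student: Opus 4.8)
\noindent\emph{Proof proposal.} The plan is to regard the central fiber as a (reduced, possibly singular) hypersurface and run Siu's metric--bootstrapping inside the adjunction framework developed above, the new inputs being the triviality of the normal bundle of a fiber and the log terminal hypothesis on $M_0$. I would begin by recording two structural facts. Since $M_0=\pi^{-1}(0)$ is cut out in $M$ by $\pi^*t$, the pull-back of the coordinate on $\Delta$, the line bundle $\mathcal{O}_M([M_0])$ is trivial and the adjunction formula gives $\omega_{M_0}\cong\omega_M|_{M_0}$; consequently the target sheaf $\omega_{M_0}^{\otimes m}\otimes\mathcal{O}_{M_0}(L|_{M_0})\otimes\widehat{\mathscr{I}}(\varphi_L|_{M_0})$ is exactly the restriction of $\omega_M^{\otimes m}\otimes\mathcal{O}_M(L)$, weighted by the Ohsawa measure $dV_{M_0}[\Psi]$ for the polar function $\Psi$ attached to $\mathscr{I}_{M_0}$. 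Moreover, because $M_0$ carries at most log terminal singularities, Theorem~\ref{SingPairs0}$(1)$ shows these are rational; together with the complete intersection description of the dualizing sheaf (the Remark after Definition~\ref{MIS_NO}, giving $\omega_{M_0}^{\mathrm{GR}}(\varphi_L|_{M_0})=\omega_{M_0}\otimes\widehat{\mathscr{I}}(\varphi_L|_{M_0})$), this guarantees that each $\sigma\in H^0\big(M_0,\omega_{M_0}^{\otimes m}\otimes\mathcal{O}_{M_0}(L)\otimes\widehat{\mathscr{I}}(\varphi_L|_{M_0})\big)$ has finite $L^2$ norm against $dV_{M_0}[\Psi]$, i.e.\ the singularities of the fiber pose no obstruction to integrability.

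The case $m=1$ is the engine. I would twist the adjunction exact sequence $(\star)$ of Theorem~\ref{adjoint} (for $H=M_0$, weight $\varphi_L$) by $\omega_M\otimes\mathcal{O}_M(L)$; using $\mathcal{O}_M([M_0])\cong\mathcal{O}_M$, the term on the right becomes $i_*\big(\omega_{M_0}\otimes\mathcal{O}_{M_0}(L)\otimes\widehat{\mathscr{I}}(\varphi_L|_{M_0})\big)$, so surjectivity of the restriction map reduces to an $L^2$ extension of a $\widehat{\mathscr{I}}(\varphi_L|_{M_0})$-valued form from $M_0$ to $M$. Since only $\sqrt{-1}\partial\bar\partial\varphi_L\geq0$ is assumed and $M$ is noncompact over $\Delta$, Theorem~\ref{vanishing_noncpt} does not apply verbatim; instead I would invoke the Guan--Zhou $L^2$ extension theorem \cite{G-Z_optimal} adapted to the measure $dV_{M_0}[\Psi]$ (cf.\ Remark~\ref{computation}) over a relatively compact subfamily $\pi^{-1}(\Delta')$, whose extension constant is independent of the weight. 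The lack of strict positivity is absorbed by the standard device: letting $\phi_A$ be the smooth weight of a positively curved metric on the positive line bundle furnished by projectivity, replace $\varphi_L$ by $\varphi_L+\varepsilon\phi_A$ to make the curvature $\geq\varepsilon\omega>0$, extend for each $\varepsilon>0$, and pass to the limit $\varepsilon\to0$; as $\phi_A$ is locally bounded this changes neither the bundle nor, in the limit, the integrability, while the uniform $L^2$ bound yields a weakly convergent subsequence whose limit is the desired holomorphic extension.

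For $m\geq2$ I would carry out Siu's bootstrapping to reduce the $m$-th power to the twisted $m=1$ statement. Writing $\omega_M^{\otimes m}\otimes\mathcal{O}_M(L)=\omega_M\otimes\big(\omega_M^{\otimes(m-1)}\otimes\mathcal{O}_M(L)\big)$, the task is to equip the inner bundle $\omega_M^{\otimes(m-1)}\otimes\mathcal{O}_M(L)$ with a metric of semipositive curvature for which $\sigma$ remains integrable, so that the engine of Step~1 applies. Following \cite{Siu98, Siu04, Paun07}, this metric is manufactured from the global sections of $\omega_M^{\otimes m}\otimes\mathcal{O}_M(L)$ already extended over $\pi^{-1}(\Delta')$, after passing to high tensor powers and extracting roots; the construction closes up precisely because the extension constant in the $L^2$ estimate is uniform in the power, and the rationality of the singularities of $M_0$ keeps every intermediate section square-integrable against $dV_{M_0}[\Psi]$. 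Iterating the extension over $\varepsilon>0$ and letting $\varepsilon\to0$ as in Step~1 then produces the sought global section of $\omega_M^{\otimes m}\otimes\mathcal{O}_M(L)$ restricting to $\sigma$.

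The hardest part will be the uniform control of the $L^2$ norms through both loops --- the bootstrapping in the power and the regularization as $\varepsilon\to0$ --- while ensuring that at every stage the metrics restrict meaningfully to the \emph{singular} fiber $M_0$ and the extended sections stay square-integrable with respect to $dV_{M_0}[\Psi]$. This is exactly where the log terminal hypothesis is indispensable: via a log resolution of $M_0$ it converts the abstract membership in $\widehat{\mathscr{I}}(\varphi_L|_{M_0})$ into a genuine finite-norm condition, so that the Ohsawa--Takegoshi machinery of \cite{OT87, G-Z_optimal} can be run with estimates independent of the auxiliary parameters.
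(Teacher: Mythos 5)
The paper states Theorem \ref{Siu_extension} without proof (this is an announcement note), so there is no argument of the author's to compare against line by line; what can be said is that your proposal follows exactly the strategy the paper signals --- adjunction via the exact sequence $(\star)$ with $\mathcal{O}_M([M_0])$ trivial, $L^2$ extension from the fiber with respect to Ohsawa's measure $dV_{M_0}[\Psi]$ in the form of \cite{G-Z_optimal}, and the Siu--P\u{a}un bootstrapping \cite{Siu98, Siu04, Paun07} for $m\geq2$ --- and it correctly locates where each hypothesis enters, in particular that the log terminal assumption (triviality of $\widehat{\mathscr{I}}(0)$ on $M_0$, equivalently rationality/canonicity for the hypersurface $M_0$ by Theorem \ref{rationality}) is what makes holomorphic sections square-integrable against $dV_{M_0}[\Psi]$ at every stage of the induction. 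The two points you leave compressed, and which would need to be written out in a full proof, are (i) the uniformity in the power $k$ of the Ohsawa--Takegoshi constant for the measure $dV_{M_0}[\Psi]$ through the bootstrapping loop (this is supplied by the general extension theorem of \cite{G-Z_optimal}, whose constant depends only on $\sup\Psi$ over the relatively compact subfamily), and (ii) the passage from an extension over $\pi^{-1}(\Delta')$ to a global section on $M$, which is routine via coherence of the direct image and Cartan's Theorem B on the Stein base; neither is a gap in substance.
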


Analogous to the proof of Theorem \ref{Siu_extension}, we deduce the following extension theorem for projective family with singular fibers; see \cite{Claudon07} for an argument on the case of smooth fibers.

\begin{corollary}
Let $\pi:M\to\Delta$ be a projective family and $(L,h_L)$ a holomorphic line bundle over $M$ endowed with a (possibly singular) Hermitian metric $e^{-2\varphi_L}$ such that the curvature current $\sqrt{-1}\partial\bar{\partial}\varphi_L\geq0$. Assume that the restriction of $\varphi_L$ to the central fiber $M_0$ is well defined and $\widehat{\mathscr{I}}(\varphi_L|_{M_0})=\mathcal{O}_{M_0}$.

Then, the natural restriction map
$$H^0\big(M,\big(\omega_M\otimes\mathcal{O}_M(L)\big)^{\otimes m}\big)\longrightarrow H^0\big(M_{0},\big(\omega_{M_0}\otimes\mathcal{O}_{M_0}(L)\big)^{\otimes m}\big)$$ is surjective for every $m > 0$.
\end{corollary}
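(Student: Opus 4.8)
The plan is to re-run the Siu-type inductive $L^2$-extension argument underlying Theorem \ref{Siu_extension}, the only structural change being the bookkeeping of $L$: in the present target each of the $m$ powers carries its own copy of $L$, so $\varphi_L$ enters the induction \emph{once per step} rather than once overall. First I would shrink $\Delta$ and, using that the family is projective, fix a relatively very ample line bundle $A$ on $M$ with a smooth metric of strictly positive curvature. Observe that $M_0=\pi^{-1}(0)=\{t\circ\pi=0\}$ is a reduced hypersurface, i.e. a principal (hence Cartier) divisor in the smooth manifold $M$, so adjunction gives $\omega_{M_0}=\big(\omega_M\otimes\mathcal{O}_M([M_0])\big)|_{M_0}\cong\omega_M|_{M_0}$ (the twist is trivial near $M_0$ since $[M_0]=\pi^*[\{0\}]$). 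This identifies the target $\big(\omega_{M_0}\otimes\mathcal{O}_{M_0}(L)\big)^{\otimes m}$ with the restriction of $\big(\omega_M\otimes\mathcal{O}_M(L)\big)^{\otimes m}$ to $M_0$, so the restriction map is the expected one, and it suffices to extend a single section $u\in H^0\big(M_0,(\omega_{M_0}\otimes\mathcal{O}_{M_0}(L))^{\otimes m}\big)$.

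For the base step I would extend $u$ after an auxiliary twist: applying the Ohsawa--Takegoshi--Guan--Zhou $L^2$ extension theorem (\cite{G-Z_optimal}) across the possibly singular hypersurface $M_0\subset M$, with respect to Ohsawa's extension measure $dV_{M_0}[\Psi]$ attached to $\mathscr{I}_{M_0}$, I extend $u\cdot s_A$ to a global section of $\big(\omega_M\otimes\mathcal{O}_M(L)\big)^{\otimes m}\otimes A$, where $s_A$ is a suitable section of $A$. This is exactly the point where the hypothesis is used: because the relevant metric weight on the fiber contributes only a single copy of $\varphi_L$, the finiteness of the seed $L^2$-norm of $u$ against $dV_{M_0}[\Psi]$ is governed by $\widehat{\mathscr{I}}(\varphi_L|_{M_0})$, and the assumption $\widehat{\mathscr{I}}(\varphi_L|_{M_0})=\mathcal{O}_{M_0}$ guarantees that $u$ is square-integrable, so the twisted extension exists.

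The inductive tower is the core and proceeds as in the proof of Theorem \ref{Siu_extension}. Writing the level-$p$ bundle as $\big(\omega_M\otimes\mathcal{O}_M(L)\big)^{\otimes p}\otimes A=\omega_M\otimes F_p$ with $F_p=\big[(\omega_M\otimes\mathcal{O}_M(L))^{\otimes(p-1)}\otimes A\big]\otimes\mathcal{O}_M(L)$, I equip the bracketed factor with the self-intersection (Bergman-type) metric $h_{p-1}=1/\sum_j|U^{(p-1)}_j|^2$ built from the already-extended level-$(p-1)$ sections, whose curvature is semipositive, and I equip the remaining copy of $L$ with $e^{-2\varphi_L}$. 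Since $\sqrt{-1}\partial\bar\partial\varphi_L\geq0$, the weight $h_{p-1}\cdot e^{-2\varphi_L}$ is plurisubharmonic, and combined with the positive reserve from $A$ it meets the curvature hypotheses of the extension theorem; crucially, $\varphi_L$ appears with multiplicity one, so at every level only the triviality of $\widehat{\mathscr{I}}(\varphi_L|_{M_0})$ — never of $\widehat{\mathscr{I}}(m\varphi_L|_{M_0})$ — is required. Iterating up to level $m$ and then running the standard limiting argument (passing to high powers $km$, extracting uniform estimates, and letting $k\to\infty$ as in \cite{Siu98, Claudon07}) removes the auxiliary bundle $A$ and yields an extension of $u$ itself to $H^0\big(M,(\omega_M\otimes\mathcal{O}_M(L))^{\otimes m}\big)$, after possibly shrinking $\Delta$.

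The main obstacle is analytic rather than formal: one needs an $L^2$-extension theorem across the \emph{singular} hypersurface $M_0$ that is compatible with Ohsawa's measure $dV_{M_0}[\Psi]$ and that delivers constants uniform enough to survive the $k\to\infty$ limit in which $A$ is discarded. This is precisely the singular-fiber extension machinery already set up in the proof of Theorem \ref{Siu_extension}; the present Corollary is obtained by feeding into it the per-step weight $h_{p-1}\cdot e^{-2\varphi_L}$ together with the fixed background metric on $L$, and by replacing the log-terminal hypothesis with the stronger normalization $\widehat{\mathscr{I}}(\varphi_L|_{M_0})=\mathcal{O}_{M_0}$, which collapses the multiplier ideal in the target to $\mathcal{O}_{M_0}$. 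Verifying semipositivity of the self-intersection metrics and checking that the induction closes up are then routine once the singular extension theorem and the measure $dV_{M_0}[\Psi]$ are in hand.
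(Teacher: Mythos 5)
Your proposal is correct and follows essentially the same route as the paper, which itself only remarks that the Corollary is proved ``analogous to the proof of Theorem \ref{Siu_extension}'' with the twisted bookkeeping of \cite{Claudon07}: one copy of $(L,e^{-2\varphi_L})$ enters per inductive step, so only the triviality of $\widehat{\mathscr{I}}(\varphi_L|_{M_0})$ (never of a multiple) is needed, exactly as you observe. Your fleshing-out of the auxiliary ample twist, the self-intersection metrics, and the limiting argument matches the intended Siu--P\u{a}un--Claudon scheme adapted to the singular fiber via Ohsawa's extension measure.
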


As a straightforward application of Theorem \ref{Siu_extension}, we obtain the following singular version of deformation invariance of plurigenera established by Siu in \cite{Siu98, Siu02}:

\begin{corollary} \emph{(Siu's theorem on plurigenera).} \label{Siu_invariance}
Let $\pi:M\to\Delta$ be a projective family and assume that every fiber $M_t$ has at most log terminal singularities. Then for each $m\geq0$, the plurigenus
$$P_m(M_t):=\dim_{\mathbb{C}}H^0(M_t,\omega_{M_t}^{\otimes m})$$
is independent of $t$.
\end{corollary}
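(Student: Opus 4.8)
\noindent\emph{Proof of Corollary \ref{Siu_invariance}.} The plan is to show that $P_m(M_t)$ is locally constant on $\Delta$; since $\Delta$ is connected, this gives the claim. The case $m=0$ is trivial, as $P_0(M_t)=\dim_{\mathbb C}H^0(M_t,\mathcal O_{M_t})=1$ for the connected fibres, so we fix $m>0$ and, after recentring the disk at an arbitrary point, argue near the central fibre $M_0$. The two ingredients are Grauert's upper semicontinuity theorem for the proper (flat) map $\pi$, which makes $\pi_*(\omega_{M/\Delta}^{\otimes m})$ coherent and $t\mapsto h^0(M_t,\omega_{M_t}^{\otimes m})$ upper semicontinuous, and Theorem \ref{Siu_extension}, which supplies the surjectivity needed to upgrade this to local constancy through the base-change criterion.

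To produce the surjection I would apply Theorem \ref{Siu_extension} to the \emph{trivial} line bundle $L=\mathcal O_M$ with the trivial metric $\varphi_L\equiv0$. The hypotheses hold at once: $\sqrt{-1}\partial\bar\partial\varphi_L=0\ge0$, and $\varphi_L|_{M_0}\equiv0$ is well defined, while $M_0$ has at most log terminal singularities by assumption. The essential point is that log terminality forces the multiplier ideal of the zero weight to be trivial, $\widehat{\mathscr{I}}(\varphi_L|_{M_0})=\widehat{\mathscr{I}}(0)=\mathcal O_{M_0}$. Indeed, by the resolution formula recorded after Theorem \ref{property} we have $\widehat{\mathscr{I}}(0)=\pi_*\big(\mathcal O(K_{\widetilde\Omega/\Omega}-rR_X)|_{\widetilde X}\big)$, and since all coefficients of $K_{\widetilde\Omega/\Omega}-rR_X$ are integers the conditions \emph{log terminal} ($>-1$) and \emph{canonical} ($\ge0$) coincide, so no section is lost in the push-forward (compare Theorem \ref{MIS_equi} and Theorem \ref{SingPairs0}). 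Hence Theorem \ref{Siu_extension} yields that the restriction map
\[
H^0\big(M,\omega_M^{\otimes m}\big)\longrightarrow H^0\big(M_0,\omega_{M_0}^{\otimes m}\big)
\]
is surjective.

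Next I would feed this into the cohomology-and-base-change formalism. Trivializing $K_\Delta$ on the one-dimensional base, we identify $\omega_M\cong\omega_{M/\Delta}$ and set $\mathcal F=\omega_{M/\Delta}^{\otimes m}$, so that $\mathcal F|_{M_0}\cong\omega_{M_0}^{\otimes m}$ by adjunction along the Cartier divisor $M_0$. The restriction map above factors through the base-change homomorphism $\beta_0\colon(\pi_*\mathcal F)\otimes k(0)\to H^0(M_0,\mathcal F|_{M_0})$, and surjectivity of the restriction forces $\beta_0$ to be surjective. By Grothendieck's theorem this already implies that $\beta_0$ is an isomorphism, that $\pi_*\mathcal F$ is locally free in a neighbourhood of $0$, and that $\beta_t$ is an isomorphism for all $t$ near $0$. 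Consequently $P_m(M_t)=\dim_{\mathbb C}H^0(M_t,\omega_{M_t}^{\otimes m})$ equals the locally constant rank of $\pi_*\mathcal F$ near $0$, in harmony with the upper semicontinuity bound. Running this at every point of $\Delta$ shows $P_m$ is locally constant, hence constant.

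The step I expect to be the genuine obstacle is the verification that $\mathcal F=\omega_{M/\Delta}^{\otimes m}$ is flat over $\Delta$ and satisfies base change when the fibres are only log terminal — hence normal and Cohen--Macaulay, but possibly non-Gorenstein. This is exactly where the hypothesis on the fibres enters a second time: one must know that the relative dualizing sheaf is $\Delta$-flat and restricts fibrewise as $\omega_{M/\Delta}|_{M_t}=\omega_{M_t}$, so that the abstract dimension computed by $\pi_*\mathcal F$ really is the plurigenus $P_m(M_t)$, and that the tensor power commutes with this restriction. The only other place the log terminal assumption is indispensable is the identification $\widehat{\mathscr{I}}(0)=\mathcal O_{M_0}$ above, which guarantees that the extension theorem surjects onto the \emph{full} space of pluricanonical sections rather than a proper subsheaf. \hfill$\Box$
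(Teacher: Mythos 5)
Your argument is correct and is exactly the route the paper intends: it states Corollary \ref{Siu_invariance} as ``a straightforward application of Theorem \ref{Siu_extension}'' with $L=\mathcal{O}_M$ trivial, the triviality $\widehat{\mathscr{I}}(0)=\mathcal{O}_{M_0}$ being immediate from the paper's definition of log terminal singularities, followed by the standard semicontinuity/base-change upgrade from surjectivity of restriction to constancy of $P_m$. (Your worry about non-Gorenstein fibres is moot, since each $M_t$ is a hypersurface in the manifold $M$ and hence Gorenstein, but this does not affect the proof.)
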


\begin{remark}
In fact, in our situation, we have the equivalence of log terminal singularities and canonical singularities by Theorem \ref{rationality}. Thus, the above invariance of plurigenera has been established in \cite{Taka07} by combining a complicated $L^2$ extension theorem with some algebraic techniques.
\end{remark}
\bigskip

\section{Singularities of pairs associated to plurisubharmonic functions}

In this section, we study the singularities of complex spaces (\emph{not} necessarily normal or $\mathbb{Q}$-Gorenstein). Concretely, we will characterize several important classes of singularities of pairs associated to plurisubharmonic functions in terms of the triviality of our multiplier and adjoint ideals. We refer to \cite{deFD14, EIM16} and \cite{Ambro11, Kollar_MMP} for some algebraic counterparts on singularities defined by discrepancies.

\begin{definition}
Let $X$ be a complex space of pure dimension with $x\in X$ a point and $\varphi\in\text{Psh}(X)$ a plurisubharonic function on $X$.

The pair $(X,\varphi)$ is \emph{log terminal} at $x$ if $\widehat{\mathscr{I}}(\varphi)_x=\mathcal{O}_{X,x}$. The point $x$ is called a \emph{log terminal singularity} of $X$ if trivial pair $(X,0)$ is log terminal at $x$.

The pair $(X,\varphi)$ is \emph{log canonical} at $x$ if $\widehat{\mathscr{I}}((1-\varepsilon)\cdot\varphi)_x=\mathcal{O}_{X,x}$ for all $0<\varepsilon<1$. $(X,\varphi)$ is \emph{log terminal} (resp. \emph{log canonical}) if it is log terminal (resp. log canonical) at every point in $X$.
\end{definition}

\begin{theorem} \label{SingPairs}
Let $X$ be a complex space of pure dimension with $x\in X$ a point and $\varphi\in\emph{Psh}(X)$. Then,

$(1)$ If $(X,\varphi)$ is log terminal at $x$, then $x$ is a rational singularity of $X$.

$(2)$ If $(X,\varphi)$ is log canonical at $x$, then the complex subspace $(A,(\mathcal{O}_{X}/\widehat{\mathscr{I}}(\varphi))|_A)$ is reduced near $x$; in addition, if $\varphi$ has analytic singularities, then $(A,(\mathcal{O}_{X}/\widehat{\mathscr{I}}(\varphi))|_A)$ is weakly normal at $x$, where $A:=N(\widehat{\mathscr{I}}(\varphi))$ denotes the zero-set of coherent ideal sheaf $\widehat{\mathscr{I}}(\varphi)$.
\end{theorem}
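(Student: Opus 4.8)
The plan is to transport both assertions to the strong factorizing desingularization $\pi\colon\widetilde\Omega\to\Omega$ furnished by Theorem \ref{property}, along which $\widehat{\mathscr{I}}(\varphi)=\pi_*\big(\mathcal{O}(K_{\widetilde\Omega/\Omega}-rR_X)|_{\widetilde X}\otimes\mathscr{I}(\varphi\circ\pi|_{\widetilde X})\big)$; here $\widetilde X$ is smooth, so all the analytic difficulty is concentrated in the divisorial weight $\psi:=\varphi\circ\pi|_{\widetilde X}$ on a manifold, where the theory of multiplier ideals is classical. When $\varphi$ carries analytic singularities I would refine $\pi$ to a common log resolution making $\psi$ simple normal crossing and use Theorem \ref{MIS_equi} to replace $\widehat{\mathscr{I}}$ by the Mather--Jacobian multiplier ideal, which is read off combinatorially from the Mather discrepancy, the Jacobian divisor and the $E_i$-coefficients of $\psi$. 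Coherence and strong openness (Theorem \ref{property}) will be used repeatedly to pass between $\varphi$ and the perturbations $(1\pm\varepsilon)\varphi$.

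For $(1)$, first I would discard the weight. As $\varphi$ is plurisubharmonic it is locally bounded above by some constant $C$, and then the pointwise bound $|f|^2\le e^{2C}|f|^2e^{-2\varphi}$ shows $\widehat{\mathscr{I}}(\varphi)\subseteq\widehat{\mathscr{I}}(0)\subseteq\mathcal{O}_X$; since $\widehat{\mathscr{I}}(\varphi)_x=\mathcal{O}_{X,x}$ this forces $\widehat{\mathscr{I}}(0)_x=\mathcal{O}_{X,x}$, i.e. the trivial pair is log terminal at $x$. By Theorem \ref{MIS_equi} with $\mathfrak{a}=\mathcal{O}_X$ and $c=0$ this says precisely that $(X,x)$ is Mather--Jacobian log terminal, and I would conclude by the Mather--Jacobian criterion for rationality (\cite{deFD14, EIM16}); concretely the desingularization formula at $\varphi=0$ supplies the analytic input for that criterion, matching the Grauert--Riemenschneider sheaf $\pi_*\omega_{\widetilde X}$ with the dualizing sheaf $\omega_X$ and, via Grauert--Riemenschneider vanishing, yielding the vanishing of the higher direct images of $\mathcal{O}_{\widetilde X}$ together with normality of $X$ at $x$, the defining properties of a rational singularity.

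For the reducedness in $(2)$ I would first reduce to analytic singularities. By Remark \ref{Re_SOC} one has $\widehat{\mathscr{I}}(\varphi)=\bigcup_k\widehat{\mathscr{I}}(\varphi_k)$ for an increasing sequence of Demailly--Bergman approximants $\varphi_k\nearrow\varphi$ with $\varphi_k\le\varphi$ and analytic singularities; each $(X,\varphi_k)$ is again log canonical because $\widehat{\mathscr{I}}((1-\varepsilon)\varphi_k)\supseteq\widehat{\mathscr{I}}((1-\varepsilon)\varphi)=\mathcal{O}_X$, and by Noetherianity the stalks stabilize, so it suffices to treat $\psi$ simple normal crossing. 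In that case the log canonical hypothesis $\widehat{\mathscr{I}}((1-\varepsilon)\varphi)_x=\mathcal{O}_{X,x}$ for all $\varepsilon$ pins the total discrepancy coefficients at the threshold, so that $A=N(\widehat{\mathscr{I}}(\varphi))$ is exactly the image of the reduced crossing divisor $D=\sum_{a_i=1}E_i$ of coefficient $-1$; pushing forward the suitably twisted sequence $0\to\mathcal{O}_{\widetilde X}(-D)\to\mathcal{O}_{\widetilde X}\to\mathcal{O}_D\to0$ and invoking the local vanishing theorem (\cite{Matsu_morphism}, Corollary 1.5) exhibits $\mathcal{O}_X/\widehat{\mathscr{I}}(\varphi)$ as a subsheaf of the reduced sheaf $\pi_*\mathcal{O}_D$, hence reduced near $x$.

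Finally, for weak normality under analytic singularities I would work on the simple-normal-crossing model and realize $A$ as the proper image of the reduced divisor $D$, which is itself weakly normal; the task is to prove that $\pi|_D$ identifies the weakly holomorphic functions of $A$ with $\mathcal{O}_A$, i.e. that the combinatorial gluing of the strata of $D$ produces no extra weakly holomorphic functions and that $\pi_*\mathcal{O}_D=\mathcal{O}_A$. This is the analytic counterpart of the seminormality of non--klt loci of log canonical pairs, and it is the step I expect to be the main obstacle: it requires a Mayer--Vietoris analysis over the strata of $D$ controlled by the local vanishing theorem, carried out analytically because $\psi$ has only analytic (not polynomial) singularities, and handled uniformly in the perturbation parameter $\varepsilon$ governing the approach to the log canonical threshold.
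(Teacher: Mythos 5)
The decisive gap is in your reduction of part $(2)$ to the case of analytic singularities. You invoke Remark \ref{Re_SOC} for ``Demailly--Bergman approximants $\varphi_k\nearrow\varphi$ with $\varphi_k\le\varphi$ and analytic singularities,'' but no such sequence exists in general: the hypothesis $\varphi_k\le\varphi$ under which Remark \ref{Re_SOC} (strong openness) yields $\widehat{\mathscr{I}}(\varphi)=\bigcup_k\widehat{\mathscr{I}}(\varphi_k)$ is modeled on $\varphi_k=(1+\varepsilon_k)\varphi$, which has analytic singularities only if $\varphi$ already does, whereas Demailly's Bergman-kernel approximants satisfy $\varphi_k\ge\varphi-C/k$ and approach $\varphi$ from the \emph{less} singular side, giving only $\widehat{\mathscr{I}}(\varphi)\subseteq\widehat{\mathscr{I}}(\varphi_k)$ with no control of the reverse inclusion at the threshold. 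Your displayed inclusion $\widehat{\mathscr{I}}((1-\varepsilon)\varphi_k)\supseteq\widehat{\mathscr{I}}((1-\varepsilon)\varphi)$ for $\varphi_k\le\varphi$ is also backwards: a smaller weight has a smaller multiplier ideal. Most tellingly, if one could replace a log canonical $\varphi$ by a weight with analytic singularities cutting out the same subscheme, weak normality would follow for \emph{arbitrary} plurisubharmonic weights --- but that is precisely Question \ref{Q_WNormal}, which the paper poses as open and settles only in dimension $\le 3$ via a finite-generation argument (Theorem \ref{SiuPsh}, Proposition \ref{Xu}); the statement of Theorem \ref{SingPairs} is split into ``reduced in general, weakly normal under analytic singularities'' for exactly this reason. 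The paper's proof of reducedness does not pass through any such approximation: it combines strong openness directly with the local vanishing theorem recorded immediately after the statement (Corollary 1.5 of \cite{Matsu_morphism} together with the arguments of Theorem 3.5 in \cite{EIM16}). Your outline of the weak normality step --- realizing $A$ as the image of the reduced coefficient-one divisor $D$ and descending weak normality through $\pi_*\mathcal{O}_D=\mathcal{O}_A$ --- has the right shape, but the identity $\pi_*\mathcal{O}_D=\mathcal{O}_X/\widehat{\mathscr{I}}(\varphi)$ is exactly what the local vanishing theorem supplies, and you leave it as an acknowledged obstacle rather than closing it.

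For part $(1)$ your route genuinely differs from the paper's, and its first half is sound: the bound $|f|^2\le e^{2C}|f|^2e^{-2\varphi}$ does give $\widehat{\mathscr{I}}(\varphi)\subseteq\widehat{\mathscr{I}}(0)$, so log terminality of $(X,\varphi)$ forces $\widehat{\mathscr{I}}(0)_x=\mathcal{O}_{X,x}$, i.e.\ $X$ is Mather--Jacobian log terminal at $x$ by Theorem \ref{MIS_equi}. Concluding by the MJ-rationality criterion of \cite{deFD14, EIM16} is legitimate in the algebraic category, but those results are proved for varieties; in the analytic category you must re-derive the criterion (your phrase ``supplies the analytic input'' is where the work hides). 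The paper instead argues by induction on $\dim X$ using a Bertini-type property of log terminal singularities (Lemma 4.1 of \cite{Greb11}), which sidesteps that transfer. Your reduction to the trivial weight is a clean first step for either route, but the remaining implication is not free in the analytic setting and should be addressed explicitly.
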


For our proof of above result, we need the following local vanishing theorem, which is a straightforward consequence of Corollary 1.5 in \cite{Matsu_morphism} and the arguments as in Theorem 3.5 in \cite{EIM16}.

\begin{theorem}
Let $X$ be a complex space of pure dimension and $\pi:\widetilde X\to X$ a log resolution of $\mathcal{J}ac_X$ such that $\mathcal{J}ac_X\cdot\mathcal{O}_{\widetilde X}=\mathcal{O}_{\widetilde X}(-J_{\widetilde X/X})$. Let $(L,e^{-2\varphi_L})$ be a (possibly singular) Hermitian line bundle on $\widetilde X$ with semi-positive curvature current. Then,
$$R^q\pi_*\left(\mathcal{O}_{\widetilde X}(\widehat K_{\widetilde X/X}-J_{\widetilde X/X})\otimes\mathcal{O}_X(L)\otimes\mathscr{I}(\varphi_L)\right)=0,$$
for every $q>0$.
\end{theorem}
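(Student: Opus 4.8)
The plan is to reduce the statement, via a local embedding and a strong factorizing desingularization, to the relative Nadel-type vanishing theorem of Matsumura for proper K\"ahler morphisms (Corollary 1.5 in \cite{Matsu_morphism}), following the algebraic template of Theorem 3.5 in \cite{EIM16}. First I would localize on $X$, embed $X\hookrightarrow M$ into a smooth $M$ of dimension $n+r$, and replace $\pi$ by a resolution dominating both the given one and a strong factorizing desingularization $g:\widetilde M\to M$ with $\mathscr{I}_X\cdot\mathcal{O}_{\widetilde M}=\mathscr{I}_{\widetilde X}\cdot\mathcal{O}_{\widetilde M}(-R_X)$ and $\widetilde X$ (the strict transform of $X$) smooth, so that $\pi=g|_{\widetilde X}$. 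This is harmless, since $R^q\pi_*$ may be computed on any sufficiently high resolution, the transition being controlled by Grauert--Riemenschneider vanishing for the birational maps between resolutions. On such a resolution one has, combining Theorem \ref{MIS_equi} with the pushforward description of $\widehat{\mathscr{I}}$ and the computation of Remark \ref{computation} (cf. \cite{deFD14,EIM16}), the divisorial identity
$$\widehat K_{\widetilde X/X}-J_{\widetilde X/X}=\big(K_{\widetilde M/M}-rR_X\big)\big|_{\widetilde X}.$$

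The key point is then to recast the sheaf in the theorem as $\omega_{\widetilde X}\otimes F\otimes\mathscr{I}(\varphi_L)$ with $(F,h_F)$ carrying a semipositive curvature current. By the Nash-blow-up definition of the Mather discrepancy one has $\mathcal{O}_{\widetilde X}(\widehat K_{\widetilde X/X})=\omega_{\widetilde X}\otimes\pi^*\widehat\omega_X^{-1}$, where $\widehat\omega_X$ is the (invertible) Mather canonical, so that, using the hypothesis $\mathcal{J}ac_X\cdot\mathcal{O}_{\widetilde X}=\mathcal{O}_{\widetilde X}(-J_{\widetilde X/X})$,
$$\mathcal{O}_{\widetilde X}(\widehat K_{\widetilde X/X}-J_{\widetilde X/X})\otimes\mathcal{O}_X(L)=\omega_{\widetilde X}\otimes\pi^*\widehat\omega_X^{-1}\otimes\big(\mathcal{O}_X(L)\otimes\mathcal{J}ac_X\cdot\mathcal{O}_{\widetilde X}\big).$$
I would equip $\mathcal{J}ac_X\cdot\mathcal{O}_{\widetilde X}$ with the tautological metric of weight $\frac{1}{2}\log\sum_k|j_k\circ\pi|^2$, where $(j_k)$ generate $\mathcal{J}ac_X$ locally on $X$. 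Since these pull back to a local system of generators of $\mathcal{O}_{\widetilde X}(-J_{\widetilde X/X})$, this weight is plurisubharmonic and, after dividing by a local generator, locally bounded; hence the metric has semipositive curvature current and trivial multiplier ideal. Adding it to $e^{-2\varphi_L}$ (semipositive by hypothesis) produces a metric $h_F$ on $F:=\mathcal{O}_X(L)\otimes\mathcal{O}_{\widetilde X}(-J_{\widetilde X/X})$ with semipositive curvature and $\mathscr{I}(h_F)=\mathscr{I}(\varphi_L)$.

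With this normalization the pulled-back factor $\pi^*\widehat\omega_X^{-1}$ is $\pi$-relatively flat and is peeled off by the projection formula (along the factorization of $\pi$ through the Nash blow-up), so it suffices to prove $R^q\pi_*\big(\omega_{\widetilde X}\otimes F\otimes\mathscr{I}(h_F)\big)=0$ for $q>0$. Now $\pi:\widetilde X\to X$ is a proper bimeromorphic---in particular K\"ahler---morphism from the smooth manifold $\widetilde X$, with $\dim\widetilde X-\dim X=0$; hence Corollary 1.5 in \cite{Matsu_morphism}, applied to the semipositively metrized $(F,h_F)$, yields the vanishing for all $q>\dim\widetilde X-\dim X=0$. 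Reassembling and undoing the reduction $\pi=g|_{\widetilde X}$ then gives the theorem.

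The main obstacle is the non-$\mathbb{Q}$-Gorenstein bookkeeping underlying the first two paragraphs: establishing the divisorial identity above and, equivalently, verifying that the Mather discrepancy term pairs with $\omega_{\widetilde X}$ so that, after removing the relatively flat pulled-back Mather factor $\pi^*\widehat\omega_X^{-1}$, the genuine upstairs twist is \emph{precisely} $\mathcal{O}_{\widetilde X}(-J_{\widetilde X/X})\otimes\mathcal{O}_X(L)$ carrying a semipositive metric. This is exactly the content handled in \cite{EIM16} (Theorem 3.5) and \cite{deFD14}, which I would import rather than reprove. A secondary technical point is to check that $\pi$ genuinely satisfies the K\"ahler hypothesis of \cite{Matsu_morphism}---immediate, as $\widetilde X$ is smooth and the question is local over $X$, which may be taken Stein---and that the descent to a general log resolution of $\mathcal{J}ac_X$ is legitimate, which follows from the compatibility of the sheaves $\mathcal{O}(\widehat K_{\bullet/X}-J_{\bullet/X})$ under further blow-ups together with Grauert--Riemenschneider vanishing.
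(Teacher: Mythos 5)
Your overall strategy coincides with the one the paper indicates (the paper offers no more than the citation of Corollary 1.5 in \cite{Matsu_morphism} together with the arguments of Theorem 3.5 in \cite{EIM16}, which is exactly the reduction you set up), and your first, third and fourth paragraphs --- passing to a strong factorizing desingularization, the identity $\widehat K_{\widetilde X/X}-J_{\widetilde X/X}=(K_{\widetilde M/M}-rR_X)|_{\widetilde X}$, the tautological semipositive metric on $\mathcal{J}ac_X\cdot\mathcal{O}_{\widetilde X}$ with trivial multiplier ideal, and the final application of Matsumura's relative vanishing with $q>\dim\widetilde X-\dim X=0$ --- are all sound. The genuine gap is in the middle step, where you write $\mathcal{O}_{\widetilde X}(\widehat K_{\widetilde X/X})=\omega_{\widetilde X}\otimes\pi^*\widehat\omega_X^{-1}$ with $\widehat\omega_X$ ``the (invertible) Mather canonical'' and then ``peel off'' $\pi^*\widehat\omega_X^{-1}$ by the projection formula. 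The invertible object here is the tautological quotient of $\wedge^n\Omega_X$ living on the \emph{Nash blow-up}, not on $X$: its pushforward to $X$ is only a rank-one torsion-free sheaf, so the factor you want to remove is not $\pi^*$ of a line bundle on $X$ and the projection formula along $\pi$ does not apply to it. Nor can you route around this through the Leray spectral sequence of $\widetilde X\to\widehat X\to X$, since that would require a vanishing theorem on the (possibly badly singular) Nash blow-up which Corollary 1.5 of \cite{Matsu_morphism} does not provide. Note also that the natural metric on that tautological quotient is semipositive, so its \emph{inverse} carries curvature of the wrong sign; you cannot simply absorb it into $F$ either.

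The repair is already implicit in your own first paragraph and costs nothing extra: use the divisorial identity to write
$$\mathcal{O}_{\widetilde X}(\widehat K_{\widetilde X/X}-J_{\widetilde X/X})=\omega_{\widetilde X}\otimes\Big(\wedge^r\big(\mathscr{I}_{\widetilde X}/\mathscr{I}_{\widetilde X}^2\big)\otimes\mathcal{O}_{\widetilde M}(-rR_X)\big|_{\widetilde X}\Big)\otimes\pi^*\big(\omega_M^{-1}\big|_X\big),$$
peel off only $\pi^*(\omega_M^{-1}|_X)$, which genuinely is the pullback of a line bundle on $X$, and observe that the surjection $g^*\mathscr{I}_X\twoheadrightarrow\mathscr{I}_{\widetilde X}\otimes\mathcal{O}(-R_X)$ exhibits the middle factor as an invertible quotient of $\pi^*\wedge^r(\mathscr{I}_X/\mathscr{I}_X^2)$; the induced tautological metric (weight $\frac{1}{2}\log\sum|\tau_j'/s|^2$ for the images $\tau_j'$ of local generators and a local frame $s$) is plurisubharmonic and locally bounded, hence semipositive with trivial multiplier ideal, exactly as in your treatment of $\mathcal{J}ac_X\cdot\mathcal{O}_{\widetilde X}$. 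With $F$ defined as this factor tensored with $(L,e^{-2\varphi_L})$ one has $\mathscr{I}(h_F)=\mathscr{I}(\varphi_L)$ and Corollary 1.5 of \cite{Matsu_morphism} applies verbatim; the descent to an arbitrary log resolution of $\mathcal{J}ac_X$ by Grauert--Riemenschneider, as in your last paragraph, then completes the argument.
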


Our proof of (1) in Theorem \ref{SingPairs} depends on an induction on $\dim X$ due to a Bertini property related with log terminal singularities (cf. Lemma 4.1 in \cite{Greb11}) and (2) is an immediate result of strong openness of multiplier ideal sheaves and the above local vanishing theorem.
\bigskip

Moreover, analogous to the argument of the case of hypersurfaces as in \cite{Li_adjoint}, we can establish the following

\begin{theorem}
Let $X\subset M$ be locally a complete intersection and $\varphi\in\emph{Psh}(M)$ such that the slope $\nu_x(\varphi|_X)=0$ for every $x\in X$. Then, the analytic adjoint ideal sheaf $Adj_X(\varphi)=\mathcal{O}_M$ iff $\widehat{\mathscr{I}}(\varphi|_X)=\mathcal{O}_X$ iff $X$ is normal and has only rational singularities iff $(X,x)$ is canonical for any $x\in X$.
\end{theorem}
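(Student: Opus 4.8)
The plan is to prove the four conditions equivalent by establishing the cycle $(A)\Rightarrow(B)\Rightarrow(C)\Leftrightarrow(D)\Rightarrow(B)$ together with $(B)\Rightarrow(A)$, where I abbreviate $(A)\colon Adj_X(\varphi)=\mathcal{O}_M$, $(B)\colon \widehat{\mathscr{I}}(\varphi|_X)=\mathcal{O}_X$, $(C)\colon X$ is normal with rational singularities, and $(D)\colon (X,x)$ is canonical for every $x\in X$. Since the assertion is local, I fix $x\in X$ and shrink $M$ to a Stein neighbourhood on which $\mathscr{I}_X=(g_1,\dots,g_r)$ is a complete intersection; thus $X$ is Gorenstein with invertible dualizing sheaf $\omega_X$, and the adjunction identity $\omega_X^{\mathrm{GR}}(\varphi|_X)=\omega_X\otimes\widehat{\mathscr{I}}(\varphi|_X)$ recorded after Definition \ref{MIS_NO} is available. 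The slope hypothesis forces $\nu_x(\varphi)=0$ for $x\in X$ (the Lelong number of a restriction dominates that of the ambient weight), and by upper semicontinuity $\nu_y(\varphi)<1$ for $y$ near $X$; hence $\varphi$ is log terminal in a neighbourhood of $X$, and by the remark after Theorem \ref{adjoint} one has $Adj_X(\varphi)_y=\mathscr{I}(\varphi)_y=\mathcal{O}_{M,y}$ for $y\notin X$. So the entire content of $(A)$ sits at the points of $X$.

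The implication $(A)\Rightarrow(B)$ is immediate from the restriction formula $\widehat{\mathscr{I}}(\varphi|_X)=Adj_X(\varphi)\cdot\mathcal{O}_X$ of Theorem \ref{restriction}. For $(B)\Rightarrow(C)$, note that $(B)$ says precisely that $(X,\varphi|_X)$ is log terminal, whence $X$ has rational singularities by Theorem \ref{SingPairs}(1), and rational singularities are normal; equivalently, using the sandwich $\omega_X^{\mathrm{GR}}(\varphi|_X)\subseteq\omega_X^{\mathrm{GR}}(0)\subseteq\omega_X$ (the first inclusion because $\varphi|_X$ is locally bounded above), $(B)$ forces $\omega_X^{\mathrm{GR}}(0)=\omega_X$, i.e. $\nu_*\omega_{\widehat{X}}=\omega_X$ for a resolution $\nu\colon\widehat{X}\to X$, which is the Grauert--Riemenschneider characterization of rationality for the normal Cohen--Macaulay space $X$. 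Finally $(C)\Leftrightarrow(D)$ is classical for the Gorenstein variety $X$: canonical singularities are rational (Elkik, Kempf) and, conversely, Gorenstein rational singularities are canonical, both conditions including normality.

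The analytic core is the converse $(D)\Rightarrow(B)$. Writing $\widehat{\mathscr{I}}(\varphi|_X)=\omega_X^{\mathrm{GR}}(\varphi|_X)\otimes\omega_X^{-1}$, I must show $\omega_X^{\mathrm{GR}}(\varphi|_X)=\omega_X$. Canonicity gives $\omega_X^{\mathrm{GR}}(0)=\omega_X$ by Grauert--Riemenschneider as above, so it suffices to prove that the zero-slope weight does not shrink the $L^2$ dualizing sheaf, namely $\omega_X^{\mathrm{GR}}(\varphi|_X)=\omega_X^{\mathrm{GR}}(0)$. Given $\sigma\in\omega_X^{\mathrm{GR}}(0)=\nu_*\omega_{\widehat{X}}$, its pullback $\nu^*\sigma$ is a holomorphic, hence locally bounded, top form on the smooth space $\widehat{X}$; the Lelong numbers of $\varphi\circ\nu$ vanish because those of $\varphi|_X$ do (a modification creates no Lelong numbers over centres where they vanish), so by Skoda's integrability theorem $e^{-2\varphi\circ\nu}\in L^1_{\mathrm{loc}}$. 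Changing variables, $\int e^{-2\varphi|_X}\,\sigma\wedge\overline{\sigma}=\int_{\widehat{X}}e^{-2\varphi\circ\nu}\,\nu^*\sigma\wedge\overline{\nu^*\sigma}<\infty$, so $\sigma\in\omega_X^{\mathrm{GR}}(\varphi|_X)$, which yields $(B)$.

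It remains to close with $(B)\Rightarrow(A)$. Comparing the adjunction sequence $(\star)$ with $0\to\mathscr{I}_X\to\mathcal{O}_M\to i_*\mathcal{O}_X\to0$ through the snake lemma -- legitimate precisely because $(B)$ makes $\rho$ surject onto $i_*\mathcal{O}_X$ -- gives
\[
\mathcal{O}_M/Adj_X(\varphi)\cong\mathscr{I}_X\big/\mathscr{I}(\varphi+r\log|\mathscr{I}_X|),
\]
so $(A)$ reduces to the identity $\mathscr{I}(\varphi+r\log|\mathscr{I}_X|)=\mathscr{I}_X$, the inclusion $\subseteq$ being automatic. To remove the weight I use that $\nu_x(\varphi)=0$: the strong openness of $\mathscr{I}(r\log|\mathscr{I}_X|)$ places each of its germs in some $\mathscr{I}((1+\delta)r\log|\mathscr{I}_X|)$, so $|f|^2/|g|^{2r}\in L^{1+\delta}_{\mathrm{loc}}$, and since $e^{-2\varphi}\in L^q_{\mathrm{loc}}$ at $x$ for every $q$, Hölder yields $\mathscr{I}(\varphi+r\log|\mathscr{I}_X|)=\mathscr{I}(r\log|\mathscr{I}_X|)$; it then suffices to prove $\mathscr{I}(r\log|\mathscr{I}_X|)=\mathscr{I}_X$, i.e. $\mathrm{lct}(\mathscr{I}_X)>r-1$, which for the reduced complete intersection $\mathscr{I}_X=(g_1,\dots,g_r)$ follows from Skoda's theorem in the form $\mathscr{I}(r\log|\mathscr{I}_X|)=\mathscr{I}_X\cdot\mathscr{I}((r-1)\log|\mathscr{I}_X|)$ with the second factor trivial. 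The hard part will be exactly this complete-intersection identity, where Skoda's division must be applied uniformly in the $r$ generators while tracking the discrepancies $K_{\widetilde{M}/M}-rR_X$ from the resolution description of $\widehat{\mathscr{I}}$ and $Adj_X$, together with the careful change-of-variables and Lelong-number bookkeeping in $(D)\Rightarrow(B)$ that controls the singular weight $\varphi$ against the Ohsawa measure on the \emph{singular} space $X$.
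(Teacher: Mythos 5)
The paper itself contains no proof of this statement to compare against: it only records that the result ``can be established analogous to the argument of the case of hypersurfaces'' in \cite{Li_adjoint}. Judged on its own, your architecture is reasonable and most arrows are sound: $(A)\Rightarrow(B)$ via the restriction formula of Theorem \ref{restriction}, $(B)\Rightarrow(C)$ via Theorem \ref{SingPairs}(1), $(C)\Leftrightarrow(D)$ by Elkik/Kempf for the Gorenstein space $X$, and the snake-lemma reduction of $(B)\Rightarrow(A)$ to the identity $\mathscr{I}(\varphi+r\log|\mathscr{I}_X|)=\mathscr{I}_X$ is correct.

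The genuine error is in the final step. You assert that $\mathscr{I}\bigl((r-1)\log|\mathscr{I}_X|\bigr)=\mathcal{O}_M$, i.e.\ $\mathrm{lct}(\mathscr{I}_X)>r-1$, holds ``for the reduced complete intersection''. This is false for a general reduced lci: take $C=\{y^3=x^7\}\subset\mathbb{C}^2$ and $X=C\times C\subset\mathbb{C}^4$, a reduced complete intersection of codimension $2$ cut out by $(y^3-x^7,\,w^3-v^7)$; the monomial valuation with weights $(3,7,3,7)$ has log discrepancy $20$ and takes the value $21$ on $\mathscr{I}_X$, so $\mathrm{lct}(\mathscr{I}_X)\leq 20/21<1=r-1$. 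What makes the step work is not reducedness but the log canonicity of $X$, which is already available at that point of your argument: either invoke inversion of adjunction for lci's (Ein--Musta\c{t}\u{a}) to get $\mathrm{lct}(\mathscr{I}_X)=r$ from $(D)$, or, staying inside the paper, apply the first inclusion of Theorem \ref{restriction} with trivial weight --- under $(B)$ one also has $\widehat{\mathscr{I}}(0)=\mathcal{O}_X$ since $\varphi$ is locally bounded above, whence $\mathcal{O}_X\subset\mathscr{I}\bigl((r-\delta)\log|g|\bigr)\cdot\mathcal{O}_X$ for every $\delta>0$ and therefore $|g|^{-2(r-\delta)}\in L^1_{\mathrm{loc}}$ near each point of $X$, which is exactly the triviality of your second Skoda factor. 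A secondary soft spot: in $(D)\Rightarrow(B)$ the claim that a modification ``creates no Lelong numbers over centres where they vanish'' is true but not a one-liner --- restriction to a subvariety can strictly increase Lelong numbers, so one must argue via integrability ($e^{-2c\varphi}\in L^1_{\mathrm{loc}}$ for all $c>0$, transported through the resolution against $|\mathrm{Jac}|^2$) or via Demailly's comparison theorem rather than by a bare pullback statement; you flag this as ``the hard part'', which is fair, but it is where the actual analysis lives.
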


Here, the slope of $\varphi|_X$ at $x$ is defined by
$$\nu_x(\varphi|_X):=\sup\{\gamma\geq0\ \big|\ \varphi|_X\leq\gamma\log\sum_k|f_k|+O(1)\}\in[0,+\infty),$$
where $(f_k)$ are local generators of the maximal ideal $\frak{m}_x$ of $\mathcal{O}_{X,x}$ (cf. \cite{BBEGZ}).
\bigskip

\section{Log canonical singularities of plurisubharmonic functions}

This section is devoted to investigate the local structure of log canonical singularities of plurisubharmonic functions (see \cite{De16, CDM17, Chan18} for some global properties related to various $L^2$ extension theorems).

\subsection{Finite generation of graded system of ideal sheaves}

In this subsection, we study the finite generation of certain graded system of ideal sheaves from symbolic powers; see \cite{Kollar_MMP, La04} for the basic references. Firstly, let us recall some related concepts and notations.

\begin{definition}
Let $X$ be a complex manifold and $Z\subset X$ an analytic set with ideal sheaf $\mathscr{I}_Z\subset\mathcal{O}_X$. Let $Z=\bigcup_{k\in\Lambda}Z_k$ be a global irreducible decomposition of $Z$ and $m_\Lambda:=(m_k)_{k\in\Lambda}$ be a $\Lambda$-tuple with $m_k\in\mathbb{N}$.

The \emph{$m_\Lambda$-th symbolic power} $\mathscr{I}_Z^{<m_\Lambda>}$ of $\mathscr{I}_Z$ is the ideal sheaf of germs of holomorphic functions that have multiplicity $\geq m_k$ at a general point of $Z_k$ for each $k\in\Lambda$, i.e.,
$$\mathscr{I}_Z^{<m_\Lambda>}:=\{f\in\mathcal{O}_X\ \big|\ \text{ord}_xf\geq m_k\text{\ for general point $x$ of each $Z_k$}\}.$$
\end{definition}

In particular, if $m_k=m$ for every $k\in\Lambda$, then $\mathscr{I}_Z^{<m_\Lambda>}$ is nothing but the $m$-th symbolic power $\mathscr{I}_Z^{<m>}$ of $\mathscr{I}_Z$.

\begin{remark}
$(1)$ The inequality $\text{ord}_xf\geq m_k$ means that all the partials of $f$ of order $<m_k$ vanish at $x$.

$(2)$ If the inequality $\text{ord}_xf\geq m$ holds at a general point $x\in Z_k$ for every $k$, by upper semi-continuity, it holds at every point of $Z$.
\end{remark}

\begin{remark}
Let $\pi:\widetilde X\to X$ be a log resolution of $\mathscr{I}_Z$ obtained by first blowing up $X$ along $Z$. Since $Z_k$ is generically smooth for every irreducible component $Z_k$ of $Z$, there is a unique irreducible component of the exceptional divisor of $\text{Bl}_Z(X)$ mapping onto $Z_k$, which determines an irreducible divisor $E_k\subset\widetilde X$. Then, by the definition, it follows that
$$\mathscr{I}_Z^{<m_\Lambda>}=\pi_*\mathcal{O}_{\widetilde X}\Big(-\sum_km_kE_k\Big),$$
where the sum does make sense in view of that the family $(Z_k)_{k\in\Lambda}$ is locally finite.
\end{remark}

\begin{definition}
Let $X$ be a complex manifold, $\mathscr{I}\subset\mathcal{O}_X$ an ideal sheaf and $Z\subset X$ an irreducible analytic set. The \emph{order of vanishing} $\text{ord}_Z(\mathscr{I})$ of $\mathscr{I}$ along $Z$ is the largest integer $m$ such that $\mathscr{I}\subset\mathscr{I}_Z^{<m>}$. In addition, we set $$\text{ord}_Z(c\cdot\mathscr{I}):=c\cdot\text{ord}_Z(\mathscr{I})$$ for any real number $c>0$.
\end{definition}

\begin{definition} (cf. \cite{De10}).
Let $X$ be a complex manifold, $Z\subset X$ an irreducible analytic set. Let $\varphi$ be a (quasi-) plurisubharmonic function on $X$ and $x\in X$ a point.

The \emph{Lelong number} $\nu_x(\varphi)$ of $\varphi$ at $x$ is defined to be
$$\nu_x(\varphi):=\sup\{\gamma\ge0\,|\,\varphi(z)\leq\gamma\log|z-x|+O(1)\ \mbox{near}\ x \},$$
on some coordinate neighborhood of $x$. We set $\nu_{x}(\varphi)=+\infty$ if $\varphi\equiv-\infty$.

 The \emph{generic Lelong number} of $\varphi$ along $Z$ is defined as $$\nu_Z(\varphi):=\inf\limits_{z\in Z}\nu_z(\varphi).$$
\end{definition}

\begin{remark} \label{genericLelong}
$(1)$ By the upper semi-continuity of Lelong number, it follows that $\nu_Z(\varphi)=\nu_z(\varphi)$ for a general point $z\in Z$; specifically, for any point outside a countable union of proper analytic subsets of $Z$. In addition, for any $x\in X$, we have
$$\frac{1}{n}\nu_x(\varphi)\leq c_x^{-1}(\varphi)\leq\nu_x(\varphi).$$

$(2)$ Let $z_0\in Z$ be a regular point. Then, we can deduce that
$$\nu_Z(\varphi)=\max\{\gamma\ge0\,|\,\varphi(z)\leq\gamma\log|\mathscr{I}_Z|+O(1)\ \mbox{near}\ z_0 \}.$$
\end{remark}

\begin{definition}
Let $X$ be a complex space (not necessarily reduced). A \emph{graded system of ideal sheaves} $\frak{a}_{\bullet}=\{\frak{a}_m\}$ on $X$ is a collection of coherent ideal sheaves $\frak{a}_m\subset\mathcal{O}_X\ (m\in\mathbb{N})$ such that
$$\frak{a}_0=\mathcal{O}_X\ \text{and}\ \frak{a}_i\cdot\frak{a}_j\subset\frak{a}_{i+j},\ \forall i,j\geq1.$$
\end{definition}

\begin{definition}
The \emph{Rees algebra $\emph{Rees}(\frak{a}_{\bullet})$ of $\frak{a}_{\bullet}$} is the graded $\mathcal{O}_X$-algebra
$$\text{Rees}(\frak{a}_{\bullet}):=\bigoplus\limits_{m\geq0}\frak{a}_m.$$

A graded system of ideal sheaves $\frak{a}_{\bullet}$ is \emph{finitely generated} on $X$ if $\text{Rees}(\frak{a}_{\bullet})$ is finitely generated as a graded $\mathcal{O}_X$-algebra in the sense that there is an integer $m_0$ such that $\text{Rees}(\frak{a}_{\bullet})$ is generated as an $\mathcal{O}_X$-algebra by its terms of degrees $\leq m_0$.
\end{definition}

Let $X$ be a complex manifold and $\frak{a}_{\bullet}=\{\frak{a}_m\}$ a graded system of ideal sheaves on $X$. We define a \emph{Siu plurisubharmonic function}
$\varphi_{\frak{a}_{\bullet}}$ associated to $\frak{a}_{\bullet}$ by (cf. \cite{KimSeo19})
\begin{equation*}
\begin{split}
\varphi_{\frak{a}_{\bullet}}&:=\log\left(\sum\limits_{m=1}^{\infty}\varepsilon_m|\frak{a}_{m}|^{\frac{1}{m}}\right)\\
&\ =\log\left(\sum\limits_{m=1}^{\infty}\varepsilon_m(|f_{m1}|+\cdots+|f_{mr_m}|)^{\frac{1}{m}}\right)
\end{split}
\end{equation*}
on a domain $\Omega\subset X$, where every term $\frak{a}_m$ is an ideal sheaf with a choice of finite generators $f_{m1},...,f_{mr_m}$, and $\varepsilon_m$ approach $0$ so fast as $m\to\infty$ that the infinite series locally converges uniformly.

\begin{lemma} \label{Siu_finite}
Let $\frak{a}_{\bullet}=\{\frak{a}_m\}$ be a finitely generated graded system of ideal sheaves on $\Omega\ni o$ in $\mathbb{C}^n$. Then, there exists $m_0>0$ and one Siu plurisubharmonic function $\varphi_{\frak{a}_{\bullet}}$ associated to $\frak{a}_{\bullet}$ such that $\varphi_{\frak{a}_{\bullet}}=\varphi_{\frak{a}_{\bullet},m_0}+O(1)$ near $o$ with $\varphi_{\frak{a}_{\bullet},m_0}:=\log(\sum\limits_{m=1}^{m_0}\varepsilon_m|\frak{a}_{m}|^{\frac{1}{m}})$.
\end{lemma}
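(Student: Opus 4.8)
The plan is to exploit the finite generation of $\text{Rees}(\frak{a}_\bullet)$ to choose a coherent system of generators for the high-degree pieces $\frak{a}_m$ (with $m>m_0$) out of products of generators of the low-degree pieces $\frak{a}_1,\dots,\frak{a}_{m_0}$, and then to bound the resulting tail of the defining series uniformly. Since every summand $\varepsilon_m|\frak{a}_m|^{1/m}$ is nonnegative, the inequality $\varphi_{\frak{a}_\bullet}\ge\varphi_{\frak{a}_\bullet,m_0}$ is automatic; hence the whole content lies in the reverse estimate $\varphi_{\frak{a}_\bullet}\le\varphi_{\frak{a}_\bullet,m_0}+O(1)$ near $o$, which I will deduce from a bound of the form $\exp(\varphi_{\frak{a}_\bullet})\le(1+C)\exp(\varphi_{\frak{a}_\bullet,m_0})$ on a fixed neighborhood $U\Subset\Omega$ of $o$.

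First I would fix generators $g_{k,1},\dots,g_{k,r_k}$ of $\frak{a}_k$ for each $k\le m_0$, and recall that finite generation means precisely that $\frak{a}_m=\sum\frak{a}_{i_1}\cdots\frak{a}_{i_\ell}$, the sum running over all compositions $m=i_1+\cdots+i_\ell$ with $1\le i_j\le m_0$; here the inclusion $\supset$ is the graded-system axiom $\frak{a}_i\cdot\frak{a}_j\subset\frak{a}_{i+j}$, and $\subset$ is generation in degrees $\le m_0$. Consequently the (finitely many) products $g_{i_1,s_1}\cdots g_{i_\ell,s_\ell}$ taken over all such compositions generate $\frak{a}_m$, and I take exactly these as the generating set $f_{m1},\dots,f_{mr_m}$ defining $|\frak{a}_m|$. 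Writing $b_k:=|\frak{a}_k|=\sum_{s}|g_{k,s}|$, multiplicativity of the modulus then gives the clean identity $|\frak{a}_m|=\sum b_{i_1}\cdots b_{i_\ell}$, summed over the same compositions.

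The analytic heart is the uniform estimate $|\frak{a}_m|^{1/m}\le 2M$, where $M:=\max_{1\le k\le m_0}b_k^{1/k}$ does not depend on $m$. Indeed $b_k\le M^k$ by the definition of $M$, so each term satisfies $b_{i_1}\cdots b_{i_\ell}\le M^{i_1+\cdots+i_\ell}=M^m$; since the number of compositions of $m$ is at most $2^{m-1}$, I get $|\frak{a}_m|\le 2^{m}M^m$ and hence $|\frak{a}_m|^{1/m}\le 2M$. On $U\Subset\Omega$ the continuous function $M$ is bounded, so choosing $\varepsilon_m\downarrow 0$ with $S:=\sum_{m>m_0}\varepsilon_m<\infty$ already makes the series $\sum_m\varepsilon_m|\frak{a}_m|^{1/m}$ converge locally uniformly, i.e. $\varphi_{\frak{a}_\bullet}$ is a well-defined Siu plurisubharmonic function in the sense of the definition above. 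Finally, since $\exp(\varphi_{\frak{a}_\bullet,m_0})=\sum_{k\le m_0}\varepsilon_k b_k^{1/k}\ge(\min_{k\le m_0}\varepsilon_k)\,M$, I bound the tail by $\sum_{m>m_0}\varepsilon_m|\frak{a}_m|^{1/m}\le 2MS\le \tfrac{2S}{\min_{k\le m_0}\varepsilon_k}\exp(\varphi_{\frak{a}_\bullet,m_0})$, which yields $\exp(\varphi_{\frak{a}_\bullet})\le(1+C)\exp(\varphi_{\frak{a}_\bullet,m_0})$ with $C=2S/\min_{k\le m_0}\varepsilon_k$; taking logarithms gives the claim.

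The only delicate point — and the step I would treat as the main obstacle — is the translation of ``finite generation of the Rees algebra in degrees $\le m_0$'' into the concrete compositional identity for $\frak{a}_m$, together with the \emph{freedom} to choose the product generators $g_{i_1,s_1}\cdots g_{i_\ell,s_\ell}$; once this compatible choice is in place, the estimate $b_{i_1}\cdots b_{i_\ell}\le M^m$ and the crude composition count $2^{m-1}$ make the tail summation essentially automatic and the dependence on $m$ disappears. The key mechanism is that passing to products converts the additive degree constraint $\sum i_j=m$ into the multiplicative bound $\prod M^{i_j}=M^m$, so that the $m$-th root $|\frak{a}_m|^{1/m}$ is controlled by a quantity depending only on the finitely many low-degree pieces.
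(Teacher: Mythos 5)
Your proof is correct, and it rests on the same pillar as the paper's own argument: finite generation of $\text{Rees}(\frak{a}_{\bullet})$ in degrees $\leq m_0$ gives the decomposition of $\frak{a}_m$ into sums of products of $\frak{a}_1,\dots,\frak{a}_{m_0}$, which forces $|\frak{a}_m|^{1/m}$ to be controlled by $\max_{k\leq m_0}|\frak{a}_k|^{1/k}$ so that the tail of the series is dominated by the head. Where you genuinely diverge is in the handling of the comparison constants. The paper works with an unspecified choice of generators of $\frak{a}_m$, passes through the auxiliary inclusion $\frak{a}_m^{(m-1)!}\subset\frak{a}_{m!}$ to obtain $|\frak{a}_m|^{1/m}\leq C_m^{1/m!}\bigl(|\frak{a}_1|+\cdots+|\frak{a}_{m_0}|^{1/m_0}\bigr)$ with a constant $C_m$ that may depend on $m$, and then shrinks the $\varepsilon_m$ a second time to absorb these constants. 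You instead exploit the latitude built into the definition of a Siu weight to \emph{choose} the generators of $\frak{a}_m$ to be the products of fixed generators of the low-degree pieces; this turns the ideal-theoretic decomposition into the exact pointwise identity $|\frak{a}_m|=\sum b_{i_1}\cdots b_{i_\ell}$, and the crude composition count $2^{m-1}$ then yields the bound $|\frak{a}_m|^{1/m}\leq 2M$ uniformly in $m$, after which mere summability of the $\varepsilon_m$ suffices. Your version is cleaner (no factorial trick, no $m$-dependent constants), at the small cost of having to record explicitly --- as you do --- that the choice of generators is part of the data of $\varphi_{\frak{a}_{\bullet}}$ and that the lemma is an existence statement, so this choice is permissible.
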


\begin{proof}
Since $\frak{a}_{\bullet}$ is finitely generated on $\Omega$, there exists $m_0>0$ such that for any $m>m_0$,
$$\frak{a}_m=\sum\limits_{k_1+2k_2+\cdots+m_0k_{m_0}=m}\frak{a}_1^{k_1}\cdot\frak{a}_2^{k_2}\cdots\frak{a}_{m_0}^{k_{m_0}}.$$
Note that $\mathfrak{a}_m^{(m-1)!}\subset\mathfrak{a}_{m!}$ by the definition of graded system of ideal sheaves, and then we can conclude that for some constant $C>0$ (may depend on $m$),
$$|\frak{a}_m|^{(m-1)!}\leq C\cdot\left(|\frak{a}_1|+|\frak{a}_2|^{\frac{1}{2}}+\cdots+|\frak{a}_{m_0}|^{\frac{1}{m_0}}\right)^{m!},$$
which implies that
$$|\frak{a}_m|^{\frac{1}{m}}\leq C^{\frac{1}{m!}}\cdot\left(|\frak{a}_1|+|\frak{a}_2|^{\frac{1}{2}}+\cdots+|\frak{a}_{m_0}|^{\frac{1}{m_0}} \right)$$
for all $m>m_0$ (shrinking $\Omega$ if necessary). Thus, from the definition of Siu plurisubharmonic function, after shrinking $\varepsilon_m$, it follows that
$\varphi_{\frak{a}_{\bullet}}=\varphi_{\frak{a}_{\bullet},m_0}+O(1)$ on $\Omega$.
\end{proof}

\begin{lemma} \emph{(cf. \cite{KimSeo19}, Theorem 2.2)} \label{equivalence}
For each Siu plurisubharmonic function $\varphi_{\frak{a}_{\bullet}}$ defined above on $\Omega\subset X$, it follow that $\mathscr{I}(c\cdot\varphi_{\frak{a}_{\bullet}})=\mathscr{I}(c\cdot\frak{a}_{\bullet})$ on $\Omega$ for all $c>0$.
\end{lemma}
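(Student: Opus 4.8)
The plan is to prove the two inclusions separately, after first recording the standard description of the asymptotic multiplier ideal $\mathscr{I}(c\cdot\frak{a}_\bullet)$. Writing $\log|\frak{a}_m|=\frac12\log\sum_i|f_{mi}|^2$ as in Theorem \ref{MIS_equi}, the inclusions $\frak{a}_m^k\subset\frak{a}_{mk}$ (which follow from $\frak{a}_i\cdot\frak{a}_j\subset\frak{a}_{i+j}$) give $|\frak{a}_{mk}|\geq C|\frak{a}_m|^k$ locally, hence $\frac{1}{mk}\log|\frak{a}_{mk}|\geq\frac1m\log|\frak{a}_m|+O(1)$; thus the family $\{\mathscr{I}(\frac cm\cdot\frak{a}_m)\}_m$ is non-decreasing along divisibility, and by the Noetherian property it stabilizes, so $\mathscr{I}(c\cdot\frak{a}_\bullet)=\mathscr{I}(\frac cp\cdot\frak{a}_p)$ for every sufficiently divisible $p$. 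It therefore suffices to compare $\mathscr{I}(c\varphi_{\frak{a}_\bullet})$ with $\mathscr{I}(\frac cp\cdot\frak{a}_p)$ for such $p$.

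For the inclusion $\mathscr{I}(c\cdot\frak{a}_\bullet)\subset\mathscr{I}(c\varphi_{\frak{a}_\bullet})$ I would simply isolate one term of the defining series: since $e^{\varphi_{\frak{a}_\bullet}}=\sum_m\varepsilon_m|\frak{a}_m|^{1/m}\geq\varepsilon_p|\frak{a}_p|^{1/p}$, one has $\varphi_{\frak{a}_\bullet}\geq\frac1p\log|\frak{a}_p|+\log\varepsilon_p$, so $c\varphi_{\frak{a}_\bullet}$ is less singular than $\frac cp\log|\frak{a}_p|$ up to $O(1)$, and monotonicity of multiplier ideals in the weight gives $\mathscr{I}(\frac cp\cdot\frak{a}_p)\subset\mathscr{I}(c\varphi_{\frak{a}_\bullet})$. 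Taking $p$ sufficiently divisible yields the inclusion.

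The reverse inclusion is the substantive one. Here I would first invoke strong openness (Theorem \ref{property}, equivalently the Guan--Zhou theorem on the manifold $\Omega$) to gain a little room: any $f\in\mathscr{I}(c\varphi_{\frak{a}_\bullet})_x$ already lies in $\mathscr{I}((1+\varepsilon)c\varphi_{\frak{a}_\bullet})_x$ for some $\varepsilon>0$. The target is then to establish, near $x$, an upper bound of the shape $(1+\varepsilon)\varphi_{\frak{a}_\bullet}\leq\frac1p\log|\frak{a}_p|+O(1)$ for $p$ sufficiently divisible; once this is in hand, $|f|^2|\frak{a}_p|^{-2c/p}\leq C\,|f|^2e^{-2(1+\varepsilon)c\varphi_{\frak{a}_\bullet}}$ is locally integrable, that is, $f\in\mathscr{I}(\frac cp\cdot\frak{a}_p)_x=\mathscr{I}(c\cdot\frak{a}_\bullet)_x$. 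To produce this bound I would split the series into a head $m\leq m_0$ and a tail $m>m_0$: for $p$ divisible by $m_0!$ the head is controlled termwise through $\frak{a}_m^{p/m}\subset\frak{a}_p$, which gives $\frac1m\log|\frak{a}_m|\leq\frac1p\log|\frak{a}_p|+O(1)$ and hence a bound $\leq C|\frak{a}_p|^{1/p}$, while the tail is made negligible by the rapid decay of the $\varepsilon_m$ fixed in the definition of $\varphi_{\frak{a}_\bullet}$; when $\frak{a}_\bullet$ is finitely generated this last point is exactly Lemma \ref{Siu_finite}, reducing $\varphi_{\frak{a}_\bullet}$ to $\varphi_{\frak{a}_\bullet,m_0}+O(1)$.

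The main obstacle is precisely this upper comparison in the general, non--finitely generated situation: away from the common zero locus of the $\frak{a}_m$ the tail need not be dominated by a single term $|\frak{a}_p|^{1/p}$ pointwise, so the estimate cannot be purely termwise. The role of the factor $(1+\varepsilon)$ furnished by strong openness is to absorb the tail in the $L^2$ sense rather than pointwise, and the heart of the argument, carried out in \cite{KimSeo19}, Theorem 2.2, is to show that the extra singularity contributed by the tail is $L^2$--negligible relative to the dominant head term, uniformly on a neighborhood of $x$. I expect this uniform tail control, together with the interplay between the decay of $\varepsilon_m$ and the stabilization index $p$, to be the delicate step; everything else is monotonicity and bookkeeping.
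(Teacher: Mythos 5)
The paper gives no proof of this lemma beyond the citation of \cite{KimSeo19}, so your argument must stand on its own. The reduction of $\mathscr{I}(c\cdot\frak{a}_\bullet)$ to $\mathscr{I}(\frac{c}{p}\cdot\frak{a}_p)$ for sufficiently divisible $p$ and the inclusion $\mathscr{I}(c\cdot\frak{a}_\bullet)\subset\mathscr{I}(c\varphi_{\frak{a}_\bullet})$ are correct. The gap is in the reverse inclusion, and you have in effect located it yourself: the inequality $(1+\varepsilon)\varphi_{\frak{a}_\bullet}\le\frac1p\log|\frak{a}_p|+O(1)$ that your argument needs is simply false in general. For instance, for $\frak{a}_m=(z_1^m,z_2)$ on $\mathbb{C}^2$, along $\{z_1=0\}$ the tail terms satisfy $\varepsilon_m|\frak{a}_m|^{1/m}\sim\varepsilon_m|z_2|^{1/m}$, and for $m>(1+\varepsilon)p$ the quantity $\bigl(\varepsilon_m|z_2|^{1/m}\bigr)^{1+\varepsilon}$ still dominates $|\frak{a}_p|^{1/p}\sim|z_2|^{1/p}$ as $z_2\to0$; so the factor $1+\varepsilon$ does not rescue the pointwise comparison on any neighborhood of the origin. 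The proposed remedy --- that the tail is ``$L^2$-negligible'' and gets absorbed in the $L^2$ sense --- is never substantiated, and you explicitly defer ``the heart of the argument'' to the reference. As written, the substantive half of the lemma is not proved.

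The missing idea is to apply strong openness in a different form: not to $\varphi_{\frak{a}_\bullet}$ via the factor $(1+\varepsilon)$, but to the truncations $\varphi_{\frak{a}_\bullet,k}:=\log\bigl(\sum_{m=1}^{k}\varepsilon_m|\frak{a}_m|^{1/m}\bigr)$, which increase to $\varphi_{\frak{a}_\bullet}$ and are dominated by it; by the monotone form of strong openness recorded in Remark \ref{Re_SOC}, one gets $\mathscr{I}(c\varphi_{\frak{a}_\bullet})=\bigcup_k\mathscr{I}(c\varphi_{\frak{a}_\bullet,k})$. Each truncation, being a \emph{finite} sum, does satisfy the pointwise bound you were aiming for: if $p$ is divisible by $\operatorname{lcm}(1,\dots,k)$, then $\frak{a}_m^{p/m}\subset\frak{a}_p$ for all $m\le k$, hence $|\frak{a}_m|^{1/m}\le C|\frak{a}_p|^{1/p}$ locally and $\varphi_{\frak{a}_\bullet,k}\le\frac1p\log|\frak{a}_p|+O(1)$, so that $\mathscr{I}(c\varphi_{\frak{a}_\bullet,k})\subset\mathscr{I}(\tfrac cp\cdot\frak{a}_p)\subset\mathscr{I}(c\cdot\frak{a}_\bullet)$. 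Taking the union over $k$ closes the argument with exactly the tools already present in the paper, and the troublesome tail never needs to be estimated at all.
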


At the end of this part, we prove the following result on the finite generation of certain graded system of ideal sheaves.

\begin{proposition} \emph{(Chenyang Xu).} \label{Xu}
Let $X$ be a smooth complex quasi-projective variety and $A\subset X$ an algebraic subvariety (\emph{not} necessarily irreducible). Let $\frak{a}_{\bullet}=\{\frak{a}_m\}$ be a graded sequence of ideals on $X$ given by $\frak{a}_m=\mathscr{I}_A^{<m_\Lambda>}$ with $m_k=e_km$ for each $k$ and $e_k$ the codimension of $A_k$ in $\Omega$.
If $\emph{LCT}(\frak{a}_{\bullet})=1$, then $\frak{a}_{\bullet}$ is finitely generated on $X$.
\end{proposition}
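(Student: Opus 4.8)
The plan is to reduce the finite generation of $\text{Rees}(\frak{a}_{\bullet})$ to a relative section ring and then feed the hypothesis $\text{LCT}(\frak{a}_{\bullet})=1$ into the minimal model program. First I would invoke the blow-up description recalled above: taking $\pi:Y\to X$ to be a log resolution of $\mathscr{I}_A$ obtained by first blowing up $A$, there is for each component $A_k$ a unique exceptional prime divisor $E_k$ dominating $A_k$, and $\frak{a}_m=\mathscr{I}_A^{<m_\Lambda>}=\pi_*\mathcal{O}_Y(-\sum_k m_kE_k)=\pi_*\mathcal{O}_Y(-mD)$ with $D:=\sum_k e_kE_k$ a fixed effective $\pi$-exceptional $\mathbb{Q}$-divisor. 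Hence $\text{Rees}(\frak{a}_{\bullet})=\bigoplus_{m\ge0}\pi_*\mathcal{O}_Y(-mD)$ is the relative section ring of $-D$ on a single fixed model; since finite generation may be checked on an affine cover of $X$, I would assume $X$ affine and work with $\bigoplus_m H^0(Y,\mathcal{O}_Y(-mD))$.

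Next I would translate the hypothesis into birational geometry. Because $A_k$ is generically smooth of codimension $e_k$, blowing it up contributes log discrepancy $A_X(E_k)=(e_k-1)+1=e_k$, while a general element of $\frak{a}_m$ vanishes to order exactly $m_k=e_km$ along $A_k$ and hence along $E_k$, so $\text{ord}_{E_k}(\frak{a}_{\bullet})=\lim_m\tfrac1m\,\text{ord}_{E_k}(\frak{a}_m)=e_k$. Thus each $E_k$ realizes $A_X(E_k)/\text{ord}_{E_k}(\frak{a}_{\bullet})=1$, and the equality $\text{LCT}(\frak{a}_{\bullet})=1$ is exactly the statement that the pair $(X,\frak{a}_{\bullet})$ is \emph{log canonical}: for every divisor $E$ over $X$ one has $A_X(E)\ge\text{ord}_E(\frak{a}_{\bullet})$, with the $E_k$ being log canonical places. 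This uniform bound over \emph{all} valuations is the structural content of the hypothesis, and is precisely what rules out the Nagata--Roberts type pathologies that make general symbolic Rees algebras fail to be finitely generated.

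Finally I would run the minimal model program. Setting $\Delta:=\sum_k E_k$, the pair $(Y,\Delta)$ is log smooth and log canonical, and the inequality above simultaneously controls the discrepancies along the remaining exceptional divisors of $\pi$. After a small klt perturbation of $\Delta$ (lowering the coefficients of the $E_k$ slightly while preserving $\text{ord}_E(\frak{a}_{\bullet})\le A_X(E)$) and twisting by a suitable $\pi$-ample divisor, I would present $-D$, relatively over $X$, as the adjoint divisor $K_Y+\Delta'+A$ of a klt pair with $A$ relatively semiample; equivalently, I would run the relative $(K_Y+\Delta')$-MMP over $X$ to a model on which the transform of $-D$ is $\pi$-semiample. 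The relative finite generation theorem of Birkar--Cascini--Hacon--McKernan then yields that $\bigoplus_m\pi_*\mathcal{O}_Y(-mD)=\text{Rees}(\frak{a}_{\bullet})$ is a finitely generated $\mathcal{O}_X$-algebra, as desired.

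The main obstacle I anticipate is the positivity bookkeeping in this last step: since $-D$ is $\pi$-anti-effective one cannot invoke a canonical ring directly, so the real work is to exhibit the Rees algebra as a genuine relative adjoint ring of a klt pair, and to perform the klt perturbation without letting the $E_k$ — which lie on the log canonical boundary precisely because $\text{LCT}(\frak{a}_{\bullet})=1$ — violate the discrepancy bound. It is exactly here that the hypothesis $\text{LCT}(\frak{a}_{\bullet})=1$, rather than $<1$, is indispensable: it is what guarantees the uniform inequality $A_X(E)\ge\text{ord}_E(\frak{a}_{\bullet})$ that forces the relative MMP to terminate at a semiample model and hence delivers finite generation.
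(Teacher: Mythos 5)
Your setup and your reading of the hypothesis reproduce the first half of the paper's argument: the divisors $E_k$ over the generic points of the $A_k$, the identity $\frak{a}_m=\pi_*\mathcal{O}(-m\sum_ke_kE_k)$ on any model containing the $E_k$, and the numerics $A_X(E_k)=e_k$, $\mathrm{ord}_{E_k}(\frak{a}_{\bullet})\geq e_k$, which together with $\mathrm{LCT}(\frak{a}_{\bullet})=1$ force $\mathrm{ord}_{E_k}(\frak{a}_{\bullet})=e_k$, so that each $E_k$ computes the log canonical threshold. The gap is in your last step, and it is precisely the one you flag as ``the main obstacle'': you never actually produce the model on which finite generation becomes visible. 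Writing $-D\sim_{\mathbb{Q},\pi}K_Y+\Delta'+A$ on a fixed log resolution $Y$ with $(Y,\Delta')$ klt, $\Delta'\geq0$ and $A$ $\pi$-ample amounts to choosing $A\sim_{\pi}-\sum_Fc_FF$ over the exceptional primes with $c_F\in[\,a_F+d_F,\ a_F+d_F+1)$, where $a_F=A_X(F)-1$ and $d_F$ is the coefficient of $F$ in $D$; there is no reason these box constraints should meet the relative ample cone of an arbitrary log resolution, so ``twisting by a suitable $\pi$-ample divisor'' is not available in general. Likewise, ``run the relative $(K_Y+\Delta')$-MMP'' is the right instinct but is vacuous until $\Delta'$ is specified, and the correct specification requires exactly the quantitative input you have not extracted from $\mathrm{LCT}(\frak{a}_{\bullet})=1$.

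What the paper does instead is perturb \emph{downstairs}. Since $\mathrm{LCT}(\frac{1}{m}\frak{a}_m)>1-\varepsilon$ for all $m\gg0$, the pair $(X,\frac{1-\varepsilon}{m}\frak{a}_m)$ is klt, and combining this with $\frac{1}{m}\mathrm{ord}_{E_k}(\frak{a}_m)\geq e_k$ squeezes the log discrepancies:
$$0<A_{X,\frac{1-\varepsilon}{m}\frak{a}_m}(E_k)=A_X(E_k)-\tfrac{1-\varepsilon}{m}\,\mathrm{ord}_{E_k}(\frak{a}_m)\leq e_k-(1-\varepsilon)e_k=\varepsilon e_k.$$
Choosing $\varepsilon$ with $\varepsilon e_k<1$ for every $k$, Corollary 1.4.3 of \cite{BCHM} (extraction of the finitely many divisors with log discrepancy in $(0,1)$ with respect to a klt pair) yields a model $\pi:\widetilde X\to X$ whose exceptional locus is exactly $E=\sum_kE_k$ with $-E$ $\pi$-ample, and finite generation of $\bigoplus_m\pi_*\mathcal{O}(-m\sum_ke_kE_k)$ follows on that model. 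Note that the hypothesis $\mathrm{LCT}(\frak{a}_{\bullet})=1$ enters not merely as the log canonicity bound $A_X(E)\geq\mathrm{ord}_E(\frak{a}_{\bullet})$ you cite, but as the two-sided estimate above that places the $E_k$ in the range where the extraction theorem applies; supplying this computation and replacing your adjoint-ring presentation by the extraction step would close the gap.
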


\begin{proof}
Let $A_k\ (1\leq k\leq s)$ be all the irreducible components of $A$, and $E_k$ the divisor obtained by blowing up the generic point of $A_k$. Write $E=\sum_{1\leq k\leq s}E_k$ and $\frak{a}_m=\mu_*\mathcal{O}_{X'}\big(-m\sum_{1\leq k\leq s}e_kE_k\big)$, where $\mu:X'\to X$ is a model which contains all $E_k$.

For each $E_k$, we know $\text{ord}_{E_k}(\frak{a}_m)\geq e_km$ by the definition, thus $$\text{ord}_{E_k}(\frak{a}_{\bullet})=\lim\limits_{m\to\infty}\frac{1}{m}\text{ord}_{E_k}(\frak{a}_m)\geq e_k.$$
We also have the log discrepancy $A_X(E_k)=e_k$. Therefore, the assumption implies that for each $k$,
$$1=\text{LCT}(\frak{a}_{\bullet})\leq\frac{A_X(E_k)}{\text{ord}_{E_k}(\frak{a}_{\bullet})}\leq\frac{e_k}{e_k}=1,$$
which implies that $\text{ord}_{E_k}(\frak{a}_{\bullet})=e_k$ and $E_k$ computes the log canonical threshold of $\frak{a}_{\bullet}$.

The rest is a standard argument using the minimal model program: for any $0<\varepsilon<1$, there exists sufficiently large $m_0$ such that $$\text{LCT}(\frac{1}{m}\frak{a}_m)>1-\varepsilon,\ \forall m\geq m_0.$$ Thus we know that for each $E_k$, the log discrepancy
\begin{equation*}
\begin{split}
0<A_{X,\frac{1-\varepsilon}{m}\frak{a}_m}(E_k)&:=A_X(E_k)-\frac{1-\varepsilon}{m}\text{ord}_{E_k}(\frak{a}_m)\\
&\ \leq e_k-(1-\varepsilon)e_k\\
&\ =\varepsilon e_k,
\end{split}
\end{equation*}
where the first inequality uses that $\text{LCT}(\frac{1}{m}\frak{a}_m)>1-\varepsilon$, and the middle inequality follows from that \begin{equation*}
\begin{split}
\frac{1}{m}\text{ord}_{E_k}(\frak{a}_m)&\geq\inf\limits_{m\geq1}\frac{\text{ord}_{E_k}(\frak{a}_m)}{m}\\
&=\text{ord}_{E_k}(\frak{a}_{\bullet})\\
&=e_k.
\end{split}
\end{equation*}

We can choose $\varepsilon$ sufficiently small such that $\varepsilon e_k<1$ for each $k$, then by Corollary 1.4.3 in \cite{BCHM}, we know that there exists a model $\pi:\widetilde X\to X$ such that the exceptional divisor $\text{Ex}(\pi)$ is precisely $E$, the sum of $E_k$, and $-E$ is $\pi$-ample, which implies that $\frak{a}_{\bullet}$ is finitely generated on $X$.
\end{proof}

\begin{remark}
As we will see, by combining (2) of Theorem \ref{SingPairs} with (1) of Remark \ref{analyticity}, the algebraic variety $A$ in $X$ with the reduced complex structure is a weakly normal complex space.
\end{remark}

\subsection{The relation between log canonical locus and symbolic powers}

This part is devoted to the algebraic properties of log canonical locus of plurisubharmonic functions. At first, by combining Demailly's analytic approximation of plurisubharmonic functions with properties of asymptotic multiplier ideals, we are able to establish the following result, which is important for us to study the local structure of log canonical singularities for general plurisubharmonic functions.

\begin{theorem} \label{SiuPsh}
Let $o\in\Omega\subset\mathbb{C}^{n}$ be the origin, $u\in\text{\emph{Psh}}(\Omega)$ with $c_o(u)=1$ and $A=N(\mathscr{I}(u))$. Then, it follows that
$$c_o(\varphi_{\frak{a}_{\bullet}})=\emph{LCT}_o(\frak{a}_{\bullet})=1\ \text{and}\ \mathscr{I}(\varphi_{\frak{a}_{\bullet}})_o=\mathscr{I}(\frak{a}_{\bullet})_o=\mathscr{I}(u)_o,$$
where $\frak{a}_{\bullet}=\{\frak{a}_m\}$ is the graded system of ideal sheaves on $\Omega$ given by $\frak{a}_m=\mathscr{I}_A^{<m_\Lambda>}$ with $m_k=e_km$ for each $k$ and $e_k$ the codimension of $A_k$ in $\Omega$.
\end{theorem}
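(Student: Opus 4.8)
The plan is to first peel off the formal equalities and reduce the statement to two genuine assertions about the algebraic object $\frak{a}_{\bullet}$. By Lemma \ref{equivalence} applied to every $c>0$ one has $\mathscr{I}(c\,\varphi_{\frak{a}_{\bullet}})=\mathscr{I}(c\,\frak{a}_{\bullet})$, which yields at once $c_o(\varphi_{\frak{a}_{\bullet}})=\text{LCT}_o(\frak{a}_{\bullet})$ and $\mathscr{I}(\varphi_{\frak{a}_{\bullet}})_o=\mathscr{I}(\frak{a}_{\bullet})_o$. Next, since $c_o(u)=1$ the pair $(\Omega,u)$ is log canonical at $o$, so Theorem \ref{SingPairs}$(2)$ shows that $\mathscr{I}(u)$ is radical near $o$; as $A=N(\mathscr{I}(u))$ this means $\mathscr{I}(u)_o=\mathscr{I}_{A,o}$. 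Hence everything reduces to proving $\text{LCT}_o(\frak{a}_{\bullet})=1$ and $\mathscr{I}(\frak{a}_{\bullet})_o=\mathscr{I}_{A,o}$ for the purely \emph{algebraic} symbolic-power system $\frak{a}_m=\mathscr{I}_A^{<m_\Lambda>}$.

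The geometric heart is the identity $\nu_{A_k}(u)=e_k$ for the generic Lelong number along each component $A_k$. For the upper bound I would pass to a transverse slice at a general (smooth, codimension $e_k$) point $z_0\in A_k$: the bound $u\le\nu_{A_k}(u)\log|\mathscr{I}_{A_k}|+O(1)$ of Remark \ref{genericLelong}$(2)$ forces $\mathscr{I}(cu)\subseteq\mathscr{I}\big(c\,\nu_{A_k}(u)\log|\mathscr{I}_{A_k}|\big)$, and since the transverse threshold of $\log|\mathscr{I}_{A_k}|$ equals $e_k$, log canonicity $c_{z_0}(u)\ge1$ gives $\nu_{A_k}(u)\le e_k$. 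The reverse inequality is the delicate point: the generic Lelong number only provides a one-sided (upper) comparison of $u$ with $\log|\mathscr{I}_{A_k}|$, so I would invoke Demailly's Bergman-kernel approximation to replace $u$ by approximants $u_\ell$ with analytic singularities, for which the transverse behaviour along $A_k$ is genuinely \emph{equivalent} to $\nu_{A_k}(u_\ell)\log|\mathscr{I}_{A_k}|$. The transverse threshold is then exactly $e_k/\nu_{A_k}(u_\ell)$, and combining $c_{z_0}(u)=1$ (forced by $A_k\subseteq N(\mathscr{I}(u))$ together with log canonicity) with $\nu_{A_k}(u_\ell)\to\nu_{A_k}(u)$ pins $\nu_{A_k}(u)=e_k$.

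With $\nu_{A_k}(u)=e_k$ in hand I would compute the threshold valuatively. The divisor $E_k$ extracted at the generic point of $A_k$ has $A_\Omega(E_k)=e_k=\text{ord}_{E_k}(\frak{a}_{\bullet})$ (as in the proof of Proposition \ref{Xu}), so $\text{LCT}_o(\frak{a}_{\bullet})\le1$. For the reverse inequality, the upper Lelong bound shows that every $f\in\mathscr{I}\big((m+1)u\big)$ vanishes along $A_k$ to generic order at least $\lfloor(m+1)e_k-e_k\rfloor+1=me_k+1>me_k$, whence $\mathscr{I}\big((m+1)u\big)\subseteq\mathscr{I}_A^{<m_\Lambda>}=\frak{a}_m$ as ideals near $o$; evaluating an arbitrary divisorial valuation $E$ over $o$, dividing by $m$ and letting $m\to\infty$ gives $\text{ord}_E(\frak{a}_{\bullet})\le\nu_E(u)$. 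The valuative description of the log canonical threshold (again via Demailly's approximation) then yields
$$\text{LCT}_o(\frak{a}_{\bullet})=\inf_E\frac{A_\Omega(E)}{\text{ord}_E(\frak{a}_{\bullet})}\ge\inf_E\frac{A_\Omega(E)}{\nu_E(u)}=c_o(u)=1,$$
so $\text{LCT}_o(\frak{a}_{\bullet})=1$.

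Knowing $\text{LCT}_o(\frak{a}_{\bullet})=1$ unlocks Proposition \ref{Xu}: $\frak{a}_{\bullet}$ is finitely generated near $o$ and there is a model $\pi:\widetilde\Omega\to\Omega$ with $\text{Ex}(\pi)=\sum_k E_k$, with $-\sum_k E_k$ relatively ample and $\frak{a}_m\cdot\mathcal{O}_{\widetilde\Omega}=\mathcal{O}(-m\sum_k e_k E_k)$. Because $A_\Omega(E_k)=e_k$ gives $\text{ord}_{E_k}K_{\widetilde\Omega/\Omega}=e_k-1$, each $E_k$ enters the multiplier ideal with the \emph{integer} exponent $(e_k-1)-e_k=-1$, so no rounding occurs and
$$\mathscr{I}(\frak{a}_{\bullet})=\pi_*\mathcal{O}_{\widetilde\Omega}\big(K_{\widetilde\Omega/\Omega}-\textstyle\sum_k e_k E_k\big)=\pi_*\mathcal{O}_{\widetilde\Omega}\big(-\textstyle\sum_k E_k\big)=\mathscr{I}_A;$$
together with $\mathscr{I}(u)_o=\mathscr{I}_{A,o}$ this finishes the proof, and Lemma \ref{Siu_finite} retroactively legitimises the series defining $\varphi_{\frak{a}_{\bullet}}$. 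I expect the main obstacle to be precisely the passage to the \emph{special} point $o$: because $u$ is merely plurisubharmonic there is no log resolution adapted to it, so neither $\nu_{A_k}(u)=e_k$ nor the containment $\mathscr{I}((m+1)u)\subseteq\frak{a}_m$ can be read off at $o$ by a direct discrepancy computation. Demailly's analytic approximation is exactly what converts the transverse, generically-valid information into a global statement near $o$, after which the algebraic model of Proposition \ref{Xu} carries out the discrepancy bookkeeping.
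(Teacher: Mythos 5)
The central difficulty is that your proof rests on the identity you yourself single out as the ``geometric heart'', namely $\nu_{A_k}(u)=e_k$, and this identity is false: only the upper bound $\nu_{A_k}(u)\leq e_k$ holds. Take $n=2$ and $u=\tfrac{1}{2}\log\big(|z_1|^{3}+|z_2|^{6}\big)$. Then $c_o(u)=\tfrac{2}{3}+\tfrac{1}{3}=1$, the openness theorem gives $\mathscr{I}(u)_o=\mathfrak{m}_o$, so $A=N(\mathscr{I}(u))=\{o\}$ has codimension $e=2$, yet $\nu_o(u)=\tfrac{3}{2}<2$. In this example everything built on the lower bound collapses: $\frak{a}_m=\mathfrak{m}^{2m}$, while $z_1^{5}\in\mathscr{I}(4u)\setminus\mathfrak{m}^{6}$, so your containment $\mathscr{I}\big((m+1)u\big)\subseteq\frak{a}_m$ fails for $m=3$; likewise, for $E$ the exceptional divisor of the blow-up at $o$ one has $\mathrm{ord}_E(\frak{a}_{\bullet})=2>\tfrac{3}{2}=\nu_E(u)$, so the valuative inequality you use to get $\mathrm{LCT}_o(\frak{a}_{\bullet})\geq1$ is also false (the conclusion happens to hold, but not by your argument). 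The mechanism you propose for pinning $\nu_{A_k}(u)$ from below is itself unsound: an approximant with analytic singularities is \emph{not} equivalent to $\nu_{A_k}(u_\ell)\log|\mathscr{I}_{A_k}|$ transversally to $A_k$, and its transverse threshold is \emph{not} $e_k/\nu_{A_k}(u_\ell)$ (already $\tfrac{1}{2}\log(|w_1|^2+|w_2|^4)$ has Lelong number $1$ and threshold $\tfrac{3}{2}\neq 2$). What survives of this part of your argument is exactly the soft half: $\nu_{A_k}(u)\leq e_k$, hence $\mathrm{LCT}_o(\frak{a}_{\bullet})\leq1$ and $\mathscr{I}(\frak{a}_{\bullet})_o\subseteq\mathscr{I}_{A,o}$; the genuinely hard inclusions $\mathscr{I}(u)_o\subseteq\mathscr{I}(\frak{a}_{\bullet})_o$ and $\mathrm{LCT}_o(\frak{a}_{\bullet})\geq1$ must be obtained differently (the paper only indicates Demailly approximation together with properties of asymptotic multiplier ideals, and gives no details, but they cannot be reached through a generic Lelong-number count along $A_k$).

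Independently of this, your last step overreaches. Proposition \ref{Xu} requires $A$ to be an \emph{algebraic} subvariety of a quasi-projective $X$ and $\mathrm{LCT}(\frak{a}_{\bullet})=1$ on a whole (Zariski) neighbourhood, not merely at the single point $o$; here $A$ is an arbitrary analytic set in a domain of $\mathbb{C}^n$ and the threshold is controlled only at $o$. Securing these hypotheses is precisely what costs the paper its dimension restriction: algebraicity of the components of $A$ is available in dimension $\leq3$ (slc surfaces, Artin for curves), and passing from $\mathrm{LCT}_o=1$ to $\mathrm{LCT}=1$ near $o$ needs the constructibility statement of Appendix \ref{A}. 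If your appeal to Proposition \ref{Xu} and BCHM were legitimate in this generality, you would obtain $\mathscr{I}(\frak{a}_{\bullet})=\mathscr{I}_A$ via a finitely generated model and hence, through Lemma \ref{Siu_finite} and Theorem \ref{SingPairs}$(2)$, an affirmative answer to Question \ref{Q_WNormal} in every dimension --- which the paper explicitly leaves open and only resolves for $n\leq3$. The parts of your reduction that are correct and surely coincide with the intended argument are the use of Lemma \ref{equivalence} to identify $c_o(\varphi_{\frak{a}_{\bullet}})$ with $\mathrm{LCT}_o(\frak{a}_{\bullet})$ and $\mathscr{I}(\varphi_{\frak{a}_{\bullet}})_o$ with $\mathscr{I}(\frak{a}_{\bullet})_o$, and the use of Theorem \ref{SingPairs}$(2)$ to get $\mathscr{I}(u)_o=\mathscr{I}_{A,o}$; but the theorem itself, as stated for all $n$, must be proved without finite generation of $\frak{a}_{\bullet}$.
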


\begin{remark}
As an enhanced version of Theorem \ref{SiuPsh}, we expect that there exists a plurisubharmonic function $u_A$ with analytic singularities near $o$ such that $c_o(u_A)=c_o(u)=1$ and $\mathscr{I}(u_A)_o=\mathscr{I}(u)_o$, which enables us to give a positive solution to Question \ref{Q_WNormal} by (2) of Theorem \ref{SingPairs}.
\end{remark}

\begin{corollary} \label{LCI}
With the same hypothesis as in Theorem \ref{SiuPsh}. Then:

$(1)$ The following statements are equivalent:\\

$(a)$ There exists a plurisubharmonic function $u_A$ with analytic singularities near $o$, such that $c_o(u_A)=1$ and $\mathscr{I}(u_A)_o=\mathscr{I}(u)_o$.

$(b)$ There is $m_0>0$ such that $c_o(\frac{1}{m_0}\log|\frak{a}_{m_0}|)=1$.\\

$(2)$ If $A$ is locally a (scheme-theoretic) complete intersection, then
$$c_o(\log|\frak{a}_1|)=1\ \text{and}\ \mathscr{I}(\log|\frak{a}_1|)_o=\mathscr{I}(u)_o.$$
\end{corollary}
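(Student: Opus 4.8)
The plan is to handle the two implications of~(1) separately, using throughout Theorem~\ref{SiuPsh}, which supplies $\text{LCT}_o(\mathfrak a_\bullet)=1$ and $\mathscr{I}(\mathfrak a_\bullet)_o=\mathscr{I}(u)_o$, and then to specialize to the complete-intersection case for~(2). First I would record the bookkeeping for the symbolic system. From $\mathfrak a_i\mathfrak a_j\subset\mathfrak a_{i+j}$ one gets $\mathfrak a_a^{\,b}\subset\mathfrak a_{ab}$, hence $\tfrac1a\log|\mathfrak a_a|\le\tfrac1{ab}\log|\mathfrak a_{ab}|+O(1)$ near $o$; consequently the numbers $c_o(\tfrac1m\log|\mathfrak a_m|)$ are nondecreasing along divisibility with supremum $c_o(\varphi_{\mathfrak a_\bullet})=\text{LCT}_o(\mathfrak a_\bullet)=1$, while the ideals $\mathscr{I}(\tfrac1m\mathfrak a_m)_o$ increase and, by Noetherianity, stabilize to $\mathscr{I}(\mathfrak a_\bullet)_o=\mathscr{I}(u)_o$.

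Given this, (b)$\Rightarrow$(a) is quick: from $c_o(\tfrac1{m_0}\log|\mathfrak a_{m_0}|)=1$ the monotonicity yields $c_o(\tfrac1m\log|\mathfrak a_m|)=1$ for every multiple $m$ of $m_0$, so choosing $m$ a sufficiently divisible multiple that also achieves $\mathscr{I}(\tfrac1m\mathfrak a_m)_o=\mathscr{I}(\mathfrak a_\bullet)_o$ and setting $u_A:=\tfrac1m\log|\mathfrak a_m|$ gives a weight with analytic singularities, $c_o(u_A)=1$ and $\mathscr{I}(u_A)_o=\mathscr{I}(u)_o$, i.e.~(a).

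The substance of~(1) is the converse (a)$\Rightarrow$(b), which I expect to be the main obstacle. Given $u_A$, I would first rationalize: since the log canonical threshold of an ideal is rational, $u_A=\tfrac1q\log|\mathfrak b|+O(1)$ for an ideal $\mathfrak b$ and an integer $q$. The naive route, namely to compare generic Lelong numbers and deduce $\mathfrak b\subset\overline{\mathfrak a_q}$ (whence $u_A\le\tfrac1q\log|\mathfrak a_q|+O(1)$ and $c_o(\tfrac1q\log|\mathfrak a_q|)\ge c_o(u_A)=1$), does \emph{not} work: one does get the upper bound $\nu_{A_k}(u_A)\le e_k$ along each component (from the dimension-$e_k$ form of the estimate $c_{z_0}(u_A)\le e_k/\nu_{A_k}(u_A)$ in Remark~\ref{genericLelong}, together with $c_{z_0}(u_A)\ge c_o(u_A)=1$ by lower semicontinuity of the threshold), but the reverse inequality $\nu_{A_k}(u_A)\ge e_k$ can fail, so $u_A$ and the symbolic model $\varphi_{\mathfrak a_\bullet}$ (whose generic Lelong number along $A_k$ is exactly $e_k$) need not be pointwise comparable. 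The real task is therefore to transfer the threshold equality without such domination: since $u_A$ has analytic singularities its jumping numbers are rational with bounded denominators, so $\mathscr{I}(t\,u_A)_o$ is a step function realized at a finite level, and I would match this against $\mathscr{I}(t\,\varphi_{\mathfrak a_\bullet})_o=\mathscr{I}(t\,\mathfrak a_\bullet)_o$ (Lemma~\ref{equivalence}) on a right-neighbourhood of $t=1$, rigidifying the comparison by the reducedness (indeed weak normality) of $V(u)$ from Theorem~\ref{SingPairs}, in order to force $\text{LCT}_o(\mathfrak a_\bullet)$ to be computed at a finite level $m_0$. Equivalently, the point is that an analytic approximant obstructs the classical failure of finite generation of the symbolic Rees algebra (cf.~Proposition~\ref{Xu}); isolating the correct mechanism here is the hard part of the statement.

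For~(2) I would proceed directly and sidestep the difficulty above. If $(A,o)$ is a complete intersection it is equidimensional, so all codimensions equal a single $e$ and $\mathfrak a_1=\mathscr{I}_A^{<e>}\supset\mathscr{I}_A^{e}$; the latter inclusion gives $\log|\mathfrak a_1|\ge e\log|\mathscr{I}_A|-O(1)$, whence $c_o(\log|\mathfrak a_1|)\ge\tfrac1e\,\text{lct}_o(\mathscr{I}_A)$ and $\mathscr{I}(\log|\mathfrak a_1|)_o\supset\mathscr{I}(e\log|\mathscr{I}_A|)_o$. The crux of~(2) is then the lemma that a reduced complete intersection occurring as a log canonical locus is itself log canonical, i.e.~$\text{lct}_o(\mathscr{I}_A)=e$ (the bound $\le e$ being general); I would extract this from the reducedness of $V(u)$ (Theorem~\ref{SingPairs}) and the complete-intersection adjunction, in analogy with the hypersurface characterization in~\cite{Li_adjoint}. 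Granting it, $c_o(\log|\mathfrak a_1|)\ge1$, and combined with the upper bound $\le\text{LCT}_o(\mathfrak a_\bullet)=1$ this gives $c_o(\log|\mathfrak a_1|)=1$. Finally, since $A$ is a reduced complete intersection and $V(u)$ is reduced, both $\mathscr{I}(u)_o$ and the threshold ideal $\mathscr{I}(e\log|\mathscr{I}_A|)_o$ equal $\mathscr{I}_A$; the chain $\mathscr{I}(e\log|\mathscr{I}_A|)_o\subset\mathscr{I}(\log|\mathfrak a_1|)_o\subset\mathscr{I}(\mathfrak a_\bullet)_o=\mathscr{I}(u)_o$ then pinches to $\mathscr{I}(\log|\mathfrak a_1|)_o=\mathscr{I}(u)_o$, completing~(2).
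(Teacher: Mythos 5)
The paper states Corollary \ref{LCI} without proof, so there is no official argument to compare with; judged on its own terms, your proposal is incomplete in both parts. Direction (b)$\Rightarrow$(a) is correct and complete: monotonicity along divisibility of $c_o(\frac1m\log|\mathfrak{a}_m|)$ and of $\mathscr{I}(\frac1m\mathfrak{a}_m)_o$, stabilization of the latter, and Theorem \ref{SiuPsh} do produce a valid $u_A=\frac1m\log|\mathfrak{a}_m|$. Your diagnosis that the naive Lelong-number comparison fails is also genuinely right: for instance $u_A=\frac56\log(|z_1|^2+|z_2|^3)$ with $A=\{z_1=z_2=0\}$ has $c_o(u_A)=1$ and $\mathscr{I}(u_A)=\mathscr{I}_A=(z_1,z_2)$, yet $\nu_A(u_A)=\frac53<2=e$, so $u_A$ is not dominated by the symbolic model. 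But for (a)$\Rightarrow$(b) you stop exactly where an argument is required: ``isolating the correct mechanism here is the hard part'' is an admission rather than a proof, and the sketched strategy (matching jumping numbers on a right neighbourhood of $t=1$ and ``rigidifying by reducedness'') is neither carried out nor obviously closable. The surrounding text of the paper (Lemma \ref{Siu_finite}, Lemma \ref{equivalence}, Remark \ref{analyticity}) indicates that the operative mechanism is finite generation of $\mathfrak{a}_\bullet$; any complete proof of (a)$\Rightarrow$(b) must supply a substitute for it, and yours does not. This half of (1) is a genuine gap.

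Part (2) has a more structural defect: your argument is circular. For a scheme-theoretic complete intersection the symbolic powers coincide with the ordinary powers, so $\mathfrak{a}_1=\mathscr{I}_A^{<e>}=\mathscr{I}_A^{e}$ and hence $c_o(\log|\mathfrak{a}_1|)=\frac1e\,c_o(\log|\mathscr{I}_A|)$; your ``key lemma'' $c_o(\log|\mathscr{I}_A|)=e$ is therefore literally the first assertion of (2), and you propose to ``extract'' it from reducedness and adjunction rather than prove it. The final identification $\mathscr{I}(e\log|\mathscr{I}_A|)_o=\mathscr{I}_{A,o}$ is likewise asserted, and its inclusion $\supset$ is not obvious at singular points of $A$. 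The route the paper sets up is shorter and avoids both issues: since $\mathfrak{a}_m=\mathscr{I}_A^{<em>}=\mathscr{I}_A^{em}=\mathfrak{a}_1^m$, the graded system is generated in degree one, so $\frac1m\log|\mathfrak{a}_m|=\log|\mathfrak{a}_1|+O(1)$ for every $m$ and, by Lemma \ref{Siu_finite}, $\varphi_{\mathfrak{a}_\bullet}=\log|\mathfrak{a}_1|+O(1)$; Theorem \ref{SiuPsh} then transfers $c_o(\cdot)=1$ and $\mathscr{I}(\cdot)_o=\mathscr{I}(u)_o$ directly to $\log|\mathfrak{a}_1|$, with no separate computation of $c_o(\log|\mathscr{I}_A|)$ or of $\mathscr{I}(e\log|\mathscr{I}_A|)_o$ needed. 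I would rewrite (2) along these lines and concentrate the remaining effort on (a)$\Rightarrow$(b).
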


\begin{remark} \label{analyticity}
$(1)$ Suppose that $\frak{a}_\bullet$ is finitely generated (e.g., $A$ is locally a complete intersection). Then, by Lemma \ref{Siu_finite} there exists $m_0>0$ such that the Siu plurisubharmonic function $\varphi_{\frak{a}_{\bullet}}=\varphi_{\frak{a}_{\bullet},m_0}+O(1)$ (shrinking $\varepsilon_m$ if necessary), which implies that
$$c_o(u_A)=1\ \text{and}\ \mathscr{I}(u_A)_o=\mathscr{I}(u)_o$$
by taking $u_A=\varphi_{\frak{a}_{\bullet},m_0}$.

$(2)$ If $(A,o)$ is of dimension $n-2$ and embedding dimension $n-1$, by the above Corollary, we obtain that
$$c_o(\log|\frak{a}_1|)=1\ \text{and}\ \mathscr{I}(\log|\frak{a}_1|)_o=\mathscr{I}(u)_o.$$
In particular, if $n=3$ and $(A,o)$ is a singular curve of embedding dimension $2$, then $(A,o)$ is weakly normal by (2) of Theorem \ref{SingPairs}.
\end{remark}

\begin{corollary} \label{isolated}
Let $u\in\text{\emph{Psh}}(\Omega)$ such that $c_o(u)=1$ and $A=N(\mathscr{I}(u))$. Then, we can deduce that

$(1)$ If $\dim_oA=n-1$, then any union of $(n-1)$-dimensional irreducible components of $(A,o)$ is a semi-log-canonical hypersurface.

$(2)$ Assume that $o$ is an isolated singularity of $A$. Then the multiplier ideal subscheme $V(u)$ is weakly normal near $o$.
\end{corollary}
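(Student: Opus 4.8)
The plan is to treat the two parts by separate arguments: for part $(1)$ I would compare $u$ directly with the reduced equation of the top-dimensional part of $A$ and invoke the semi-log-canonicity criterion for hypersurfaces, while for part $(2)$ I would use Theorem~\ref{SiuPsh} to replace $u$ by a weight with \emph{analytic} singularities and then feed it into Theorem~\ref{SingPairs}$(2)$.

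\emph{Part $(1)$.} Fix any union $D=\bigcup_{k\in S}A_k$ of $(n-1)$-dimensional irreducible components of $A=N(\mathscr{I}(u))$; then $D=\{h_D=0\}$ is a reduced hypersurface with $h_D=\prod_{k\in S}h_k$ the product of the reduced defining functions of the $A_k$. First I would determine the generic Lelong numbers along these components. Since each $A_k$ is an $(n-1)$-dimensional component of $N(\mathscr{I}(u))$, Skoda's criterion (cf.\ Remark~\ref{genericLelong}) gives $\nu_{A_k}(u)\ge1$; on the other hand, for a general smooth point $z\in A_k$ the lower semicontinuity of the log canonical threshold together with a transverse-disk estimate yields
\[
1=c_o(u)\le c_z(u)\le \frac{1}{\nu_{A_k}(u)}\le 1,
\]
so that $\nu_{A_k}(u)=1$ for every $k\in S$. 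By Siu's decomposition of the closed positive current $\sqrt{-1}\,\partial\bar\partial u$, its divisorial part dominates $\sum_{k\in S}[A_k]=\sqrt{-1}\,\partial\bar\partial\log|h_D|$, whence $u=\log|h_D|+u'$ with $u'$ plurisubharmonic near $o$. As $u'$ is locally bounded above, $u\le\log|h_D|+O(1)$, and comparing the integrability of $e^{-2cu}$ with that of $|h_D|^{-2c}$ forces $c_o(\log|h_D|)\ge c_o(u)=1$; since the threshold of a reduced hypersurface is at most $1$, we conclude $c_o(\log|h_D|)=1$. This says exactly that the pair $(\Omega,D)$ is log canonical at $o$, which for a reduced hypersurface is equivalent to $D$ having semi-log-canonical singularities (cf.\ \cite{K-SB}); this proves $(1)$.

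\emph{Part $(2)$.} By Theorem~\ref{SiuPsh}, the symbolic system $\frak{a}_\bullet=\{\mathscr{I}_A^{<m_\Lambda>}\}$ (with $m_k=e_km$) satisfies $\mathrm{LCT}_o(\frak{a}_\bullet)=1$ and $\mathscr{I}(\frak{a}_\bullet)_o=\mathscr{I}(u)_o$. Suppose for the moment that $\frak{a}_\bullet$ is finitely generated near $o$. Then Lemma~\ref{Siu_finite} together with Remark~\ref{analyticity}$(1)$ produce an integer $m_0$ and the weight $u_A:=\varphi_{\frak{a}_\bullet,m_0}$, which has analytic singularities and satisfies $c_o(u_A)=1$ and $\mathscr{I}(u_A)_o=\mathscr{I}(u)_o$. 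Since $c_o(u_A)=1$ means that $(\Omega,u_A)$ is log canonical at $o$, the analytic-singularities half of Theorem~\ref{SingPairs}$(2)$ then shows that $V(u)=V(u_A)$ is weakly normal at $o$, as desired.

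Thus everything reduces to the finite generation of $\frak{a}_\bullet$ near $o$, which I expect to be the main obstacle: symbolic Rees algebras need not be finitely generated, and no minimal model program is available in the purely analytic category. This is precisely where the isolated-singularity hypothesis enters. Because $o$ is an isolated singular point of $A$, Artin's algebraization theorem lets me replace the germ $(A,o)$ by the analytic germ of an algebraic subvariety $A'$ of a smooth quasi-projective variety, sharing the same completed local ring and hence the same irreducible components, symbolic powers, and log canonical threshold. Proposition~\ref{Xu} applies to this algebraic model—via $\mathrm{LCT}(\frak{a}_\bullet)=1$ and the finite generation machinery of \cite{BCHM}—to give finite generation there; since analytic localization is flat and the symbolic powers commute with it near $o$, the Rees algebra of $\frak{a}_\bullet$ is finitely generated in an analytic neighborhood of $o$ as well, completing the argument. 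The delicate points to verify are that the algebraization preserves the symbolic system and its threshold, and that finite generation indeed descends from the algebraic model to the analytic germ.
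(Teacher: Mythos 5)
Your proposal is correct and, for part $(2)$, it is exactly the route the paper takes (in the guise of case (i) of the proof of Theorem \ref{3D}): Theorem \ref{SiuPsh} reduces everything to finite generation of the symbolic system $\frak{a}_\bullet$, which is obtained by Artin algebraization of the isolated singularity $(A,o)$ followed by Proposition \ref{Xu}, and one concludes via Lemma \ref{Siu_finite} and Theorem \ref{SingPairs}$(2)$. For part $(1)$ your route differs slightly from the paper's: you get $c_o(\log|h_D|)\ge 1$ by combining Skoda's criterion ($\nu_{A_k}(u)\ge1$ on each divisorial component) with the Siu decomposition of $\sqrt{-1}\partial\bar\partial u$, which yields $u\le\log|h_D|+O(1)$ and hence $e^{-2cu}\gtrsim|h_D|^{-2c}$; the paper instead uses the inclusion $\mathscr{I}(u)\subset\sqrt{\mathscr{I}(u)}=\mathscr{I}_A\subset(h_D)$ together with the standard inequality $c_o(\mathscr{I}(u))\ge c_o(u)=1$. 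Both are one-line standard arguments leading to $c_o(\log|h_D|)=1$, i.e.\ log canonicity of $(\Omega,D)$, and then to semi-log-canonicity of $D$ by adjunction and the classification in \cite{K-SB, L-R}. Two small remarks: the intermediate step $1=c_o(u)\le c_z(u)$ at a general $z\in A_k$ needs the Demailly--Koll\'ar analyticity of the sets $N(\mathscr{I}(cu))$ to be made precise (``general'' meaning outside a countable union of proper analytic subsets), but it is not actually load-bearing, since $\nu_{A_k}(u)\ge1$ already suffices for the domination $u\le\log|h_D|+O(1)$; and in part $(2)$ the points you flag as delicate (that algebraization preserves the symbolic system and its threshold, and that finite generation descends to the analytic germ) are indeed the only places requiring care, and the paper is equally terse about them.
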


\begin{remark}
As an immediate consequence of Corollary \ref{isolated}, we can obtain Theorem 1.2 in \cite{G-L_LCT16}.
\end{remark}

\subsection{A solution to Question \ref{Q_WNormal} in dimension three}

In the last part, we give a positive answer to Question \ref{Q_WNormal} in the three dimensional case.

\subsubsection{Proof of Theorem \ref{3D}}

By Theorem \ref{SiuPsh}, we have
$$c_o(\varphi_{\frak{a}_{\bullet}})=1\ \text{and}\ \mathscr{I}(\varphi_{\frak{a}_{\bullet}})_o=\mathscr{I}(u)_o,$$
where $\frak{a}_{\bullet}=\{\frak{a}_m\}$ is the graded system of ideal sheaves on $\Omega$ given by $\frak{a}_m=\mathscr{I}_A^{<m_\Lambda>}$ with $m_k=e_km$ for each $k$ and $e_k$ the codimension of $A_k$ in $\Omega$. Thanks to Lemma \ref{Siu_finite} and (2) of Theorem \ref{SingPairs}, it is sufficiently to show that $\frak{a}_{\bullet}$ is finitely generated on some neighborhood of $o$.\\

(i) When $\dim_o A=1$, it follows that $o$ is at most an isolated singularity of $A$. Then, we can achieve the finite generation by a combination of Artin's algebraization theorem with Proposition \ref{Xu}.

(ii) When $(A,o)$ is of pure dimension 2, the desired result follow from the fact that $$c_o(\mathscr{I}(u))\geq c_o(u)=1;$$
and in further $(A,o)$ is algebraic (cf. \cite{K-SB, L-R}).

(iii) When $\dim_oA=2$ and $(A,o)$ is not of pure dimension. After shrinking $\Omega$, we may assume that each irreducible component of $A$ contains the origin $o$. Let $A=A_1\cup A_2$ with $\dim_oA_k=3-k\ (k=1,2)$, and then we achieve the finite generation of $\frak{a}_\bullet$ near $o$ by Proposition \ref{Xu} and the algebraicity of $(A_k,o)$.
\hfill$\Box$

\begin{remark}
One can refer to Remark \ref{alter} in Appendix \ref{A} for an alternative argument of the proof of Theorem \ref{3D}.
\end{remark}

\subsubsection{Proof of Corollary \ref{union}}

Put $Z=\bigcup\limits_{\alpha=1}^tA_{k_\alpha}$, where $A_{k_\alpha}$ is an irreducible component of $A$ through $o$ for each $\alpha$ (shrinking $\Omega$ if necessary) and of codimension $e_{k_\alpha}$. Let $m_\Lambda=(e_{k_1},\dots,e_{k_t})$ and consider the plurisubharmonic function $$\widetilde u:=\log\left(|\mathscr{I}_Z^{<m_\Lambda>}|+e^{u}\right),$$
where $\mathscr{I}_Z^{<m_\Lambda>}$ is the $m_\Lambda$-th symbolic power of $\mathscr{I}_Z$. Thanks to Theorem \ref{3D}, it is sufficiently to prove that $c_o(\widetilde u)=1$ and $\mathscr{I}(\widetilde u)_o=\mathscr{I}_{Z,o}$, which is a consequence of Demailly's analytic approximation of plurisubharmonic functions and $(2)$ of Remark \ref{genericLelong}
\hfill$\Box$

\subsubsection{Proof of Theorem \ref{char_3D}}

$\textbf{(\emph{a})}$ When $(A,o)$ is of pure dimension 2, the desired result immediately follows from (ii) in the proof of Theorem \ref{3D}.

$\textbf{(\emph{b})}$ When $\dim_oA=2$ and $(A,o)$ is not of pure dimension. Let $$(A,o)=(A_1,o)\cup(A_2,o)$$ with $\dim_o(A_k,o)=3-k\ (k=1,2)$. Thanks to Theorem \ref{3D}, we can conclude that $V(u)$ is weakly normal near $o$.

Therefore, in view of Lemma 2.3 in \cite{AL}, it follows from the fact that $(A_1,o)\cap(A_2,o)=\{o\}$ that $T_oA_1\cap T_oA_2=0$, where $T_oA_k\ (k=1,2)$ is the Zariski tangent space of $A_k$ at $o$, which implies that both $(A_1,o)$ and $(A_2,o)$ are regular, and they intersect transversely at $o$. Thus, up to a change of variables at $o$, we obtain that $\mathscr{I}(u)_o=(z_1z_2,z_1z_3)\cdot\mathcal{O}_{3}$.

$\textbf{(\emph{c})}$ When $\dim_oA=1$, i.e., $(A,o)$ is a curve with embedding dimension at most 3. Then, Theorem \ref{3D} implies that $V(u)$ is weakly normal at $o$. Thus, we obtain that $(A,o)=(N(z_1,z_2),o)$, $(N(z_1,z_2z_3),o)$ or $(N(z_1z_2,z_1z_3,z_2z_3),o)$ in some local coordinates near $o$, which implies that $\mathscr{I}(u)_o=(z_1,z_2)\cdot\mathcal{O}_{3}$, $(z_1,z_2z_3)\cdot\mathcal{O}_{3}$ or $(z_1z_2,z_1z_3,z_2z_3)\cdot\mathcal{O}_{3}$, respectively.

$\textbf{(\emph{d})}$ When $\dim_oA=0$, the desired result is straightforward.
\hfill$\Box$

\begin{remark}
We refer to \cite{G-L_LCT19} for an alternative argument of the case $\textbf{(\emph{b})}$ by the so-called ``inversion of adjunction'' (\cite{DK01}, Theorem 2.5).
\end{remark}

\subsubsection{Proof of Corollary \ref{LCS_pairs3D}}

Since the statement is local, we may assume that $V(\frak{a}_\bullet)$ is the multiplier ideal subscheme associated to $\frak{a}_\bullet$ on a bounded Stein domain $\Omega\subset\mathbb{C}^3$. Let $\varphi_{\frak{a}_\bullet}$ be a Siu plurisubharmonic function associated to $\frak{a}_\bullet$ on $\Omega$. Then, we can deduce from Lemma \ref{equivalence} that $\mathscr{I}(\varphi_{\frak{a}_\bullet})=\mathscr{I}(\frak{a}_\bullet)$ on $\Omega$ and $c_x(\varphi_{\frak{a}_\bullet})=\text{LCT}(\frak{a}_\bullet)=1$ for any $x\in V(\frak{a}_\bullet)$. Thus, we can immediately achieve the desired result by Theorem \ref{3D} and Theorem \ref{char_3D}.

\hfill$\Box$

\begin{remark}
Similar to Corollary \ref{union}, we obtain that any union of irreducible components of $V(\frak{a}_\bullet)$ with the reduced complex structure is weakly normal as well.
\end{remark}

\appendix
   \renewcommand{\appendixname}{Appendix~\Alph{section}}

\section{Constructibility of log canonical threshold of graded system of ideal sheaves from symbolic powers} \label{A}

In this appendix, we are ready to discuss something on the constructibility of log canonical thresholds from symbolic powers, which turns out to be helpful for us to study Question \ref{Q_WNormal}.

\begin{definition} (cf. \cite{Loja91}).
Let $X$ be a complex manifold. A subset $Z\subset X$ is \emph{analytically constructible} if $Z=\bigcup\limits_{\lambda\in\Lambda}(V_\lambda\backslash W_\lambda)$, where $\{V_\lambda\}_{\lambda\in\Lambda}$ is a locally finite family of irreducible analytic sets and $W_\lambda\subsetneqq V_\lambda$ is an analytic set for every $\lambda\in\Lambda$.

A function $f: X\to[-\infty,+\infty]$ is \emph{analytically constructible} if each set $f^{-1}(t)$ is constructible for any $t\in[-\infty,+\infty]$, and the family $\{f^{-1}(t)\}_{t\in[-\infty,+\infty]}$ is locally finite on $X$.

Similarly, we can define a counterpart in the algebraic setting. In particular, every algebraically constructible set or function is analytically constructible.
\end{definition}

\begin{remark}
$(1)$ The difference, finite intersection and union of analytically constructible sets is also an analytically constructible set.

$(2)$ Let $\Omega\subset\subset X$ be a relatively compact domain. Then, the range $f(\Omega)$ of analytically constructible function $f|_\Omega$ is a finite subset in $[-\infty,+\infty]$.
\end{remark}

\begin{remark} (Chevalley-Remmert; cf. \cite{Loja91}, p. 291). \label{CheRe}
Let $\pi:\widetilde X\to X$ be a proper holomorphic mapping of complex manifolds. Then the image of every analytically constructible set in $\widetilde X$ is an analytically constructible set in $X$.
\end{remark}

\begin{proposition} \label{constructibility_3D}
Let $X$ be a complex manifold of dimension three and $Z\subset X$ an analytic set. Then, the log canonical threshold $\emph{LCT}_x(\frak{a}_{\bullet})$ is an analytically constructible function on $X$, where $\frak{a}_{\bullet}=\{\frak{a}_m\}$ is the graded sequence of ideals on $X$ given by $\frak{a}_m=\mathscr{I}_Z^{<m_\Lambda>}$ with $m_k=e_km$ for each $k$ and $e_k$ the codimension of $Z_k$ in $X$.
\end{proposition}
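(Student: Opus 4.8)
The plan is to verify the two defining conditions of an analytically constructible function separately: that each level set $\{x\in X:\mathrm{LCT}_x(\frak{a}_{\bullet})=t\}$ be analytically constructible, and that the family of these level sets be locally finite. Since the assertion is local, I would fix a relatively compact Stein domain $\Omega\subset\subset X$ and argue throughout on $\Omega$, writing $g(x):=\mathrm{LCT}_x(\frak{a}_{\bullet})$. The key structural observation is that, thanks to the coherence of the (asymptotic) multiplier ideal $\mathscr{I}(c\cdot\frak{a}_{\bullet})$ and Lemma \ref{equivalence}, the strict sublevel condition $g(x)<c$ singles out exactly the analytic set $V(\mathscr{I}(c\cdot\frak{a}_{\bullet}))$; in particular $g$ is lower semicontinuous. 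This reduces the whole problem to showing that $g$ assumes only finitely many values on $\Omega$.

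Granting that finiteness, constructibility follows formally. If the values of $g$ on $\Omega$ are $t_1<\cdots<t_N$ (including possibly $t_N=+\infty$ on $\Omega\setminus Z$), then for any $t_i<t'<t_{i+1}$ one has $\{g\le t_i\}=\{g<t'\}=V(\mathscr{I}(t'\cdot\frak{a}_{\bullet}))$, an analytic, hence constructible, set. Consequently each level set $\{g=t_i\}=\{g\le t_i\}\setminus\{g\le t_{i-1}\}$ is a difference of analytic sets and is therefore constructible, while the family $\{g^{-1}(t)\}$ is locally finite by hypothesis. Thus the entire burden of the proof rests on the finiteness of the value set.

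To establish that finiteness I would pass to a divisorial model. By the symbolic-power formula recalled above, $\frak{a}_m=\nu_*\mathcal{O}_{X_0'}\big(-m\sum_k e_kE_k\big)$, where $\nu:X_0'\to X$ extracts the divisors $E_k$ over the generic points of the components $Z_k$; hence $\frak{a}_{\bullet}$ is the divisorial graded system attached to the effective divisor $D=\sum_k e_kE_k$. Taking a log resolution $\mu:X'\to X$ on which $\mu^*D$ and $K_{X'/X}$ have simple normal crossings support, the log canonical threshold of the graded system is computed by the finitely many prime divisors $F$ in the exceptional and strict-transform locus of $\mu$:
\[
g(x)=\min_{F:\,x\in\mu(F)}\frac{A_X(F)}{\mathrm{ord}_F(\frak{a}_{\bullet})},
\]
each ratio being a fixed rational constant. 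Since locally only finitely many such $F$ occur, the value set of $g$ on $\Omega$ is finite. Moreover each image $\mu(F)\subset X$ is an analytic set by the Remmert proper mapping theorem, equivalently by Remark \ref{CheRe}, which directly re-confirms that the level sets are constructible.

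The main obstacle is precisely the assertion that a \emph{single} model $X'$ computes $g$ at every point, i.e. that the a priori infinite valuative computation of the asymptotic multiplier ideal collapses onto the finitely many divisors on $X'$. For an arbitrary graded system this fails; here it rests on the divisorial structure of $\frak{a}_{\bullet}$ together with local finite generation, which in dimension three I would secure by running the minimal model program exactly as in Proposition \ref{Xu}, via Corollary 1.4.3 of \cite{BCHM}, applied in the relative setting over $\Omega$ to the pair determined by $D$. Once local finite generation is in hand, the asymptotic multiplier ideals $\mathscr{I}(c\cdot\frak{a}_{\bullet})$ stabilize on $X'$, yielding the displayed formula for $g$ and hence the finiteness of its values, which completes the argument.
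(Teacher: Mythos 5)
Your general framework---reduce constructibility to finiteness of the value set together with analyticity of the sublevel sets cut out by the asymptotic multiplier ideals $\mathscr{I}(c\cdot\frak{a}_\bullet)$---is reasonable, but the step on which you yourself say the entire burden rests, namely local finite generation of $\frak{a}_\bullet$ ``secured by running the minimal model program exactly as in Proposition \ref{Xu},'' does not go through. The MMP argument in Proposition \ref{Xu} uses the hypothesis $\mathrm{LCT}(\frak{a}_\bullet)=1$ in an essential way: it is what forces $\mathrm{ord}_{E_k}(\frak{a}_\bullet)=e_k=A_X(E_k)$ and makes the log discrepancies $A_{X,\frac{1-\varepsilon}{m}\frak{a}_m}(E_k)\le\varepsilon e_k<1$ small enough for Corollary 1.4.3 of \cite{BCHM} to extract precisely the divisors $E_k$. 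Proposition \ref{constructibility_3D} carries no such hypothesis. Worse, the finite generation you want is simply false in this generality: when $Z$ is a curve in a threefold, $\bigoplus_m\frak{a}_m=\bigoplus_m\mathscr{I}_Z^{<2m>}$ is a Veronese of the symbolic Rees algebra of a space curve, and by the examples of Roberts and of Goto--Nishida--Watanabe (monomial space curves with non-Noetherian symbolic Rees algebra in characteristic zero) this need not be finitely generated. Consequently no single model $X'$ computes all the ideals $\mathscr{I}(c\cdot\frak{a}_\bullet)$, and your displayed formula for $g(x)$ as a minimum of finitely many fixed ratios $A_X(F)/\mathrm{ord}_F(\frak{a}_\bullet)$ is unjustified.

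The paper avoids finite generation altogether by a case analysis that exploits $\dim X=3$. If every component of $Z$ is a hypersurface, the symbolic powers coincide with the ordinary powers, so $\mathrm{LCT}_x(\frak{a}_\bullet)=\mathrm{LCT}_x(\mathscr{I}_Z)$ and a single log resolution does suffice. If $Z$ is a curve, then $\mathrm{LCT}_x(\frak{a}_\bullet)=+\infty$ off $Z$ and $=1$ on $Z_{\mathrm{reg}}$, while $Z_{\mathrm{sing}}$ is discrete, so whatever values occur there cannot spoil constructibility or local finiteness of the level sets. In the mixed case one splits $Z$ into its codimension-one and codimension-two parts, whose intersection is again discrete, and combines the two previous cases. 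If you wish to salvage your valuative computation, you must restrict it to the loci where it is actually available (the hypersurface part, and the smooth locus of the curve part) and then dispose of the residual discrete set by hand---which is, in substance, exactly what the paper's proof does.
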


\begin{proof}
We can achieve this by a similar argument to the proof of Theorem \ref{3D}. Without loss of generality, we may assume that $\dim Z_k>0$ for each irreducible component $Z_k$ of $Z$.\\

(i) When $\dim Z=1$, i.e., $Z\subset X$ is a curve. Then, the singular locus of $Z$ is at most a discrete subset in $X$. Note that $\text{LCT}_x(\frak{a}_{\bullet})=+\infty$ iff $x\in X\backslash Z$, and $\text{LCT}_x(\frak{a}_{\bullet})=1$ for any point $x\in Z_{\text{reg}}$. Hence, we can deduct that $\text{LCT}_x(\frak{a}_{\bullet})$ is an analytically constructible function on $X$.\\

(ii) When $\dim Z_k=2$ for each $k$, i.e., $Z$ is a hypersurface of $X$. Thus, we obtain that the symbolic power of $\mathscr{I}_Z$ coincides with the ordinary power, i.e., $\mathscr{I}_Z^{<m>}=\mathscr{I}_Z^{m}$ for all $m$, which implies that $\text{LCT}_x(\frak{a}_{\bullet})=\text{LCT}_x(\mathscr{I}_Z)$ for any point $x\in X$. Let $\pi:\widetilde X\to X$ be a log resolution of $\mathscr{I}_Z$, and then by a standard argument on the computation of $\text{LCT}_x(\mathscr{I}_Z)$ via the log resolution $\pi$ (see \cite{DK01}, Proposition 1.7), we conclude the constructibility of $\text{LCT}_x(\frak{a}_{\bullet})$.\\

(iii) When $\dim Z=2$ and $\dim Z_k=1$ for some $k$. Let $\widetilde Z_\alpha\ (\alpha=1,2)$ be the union of all $\alpha$-codimensional irreducible components of $Z$, and consider the graded system of ideal sheaves $\frak{a}_\bullet',\frak{a}_\bullet''$ on $X$ given by ${\frak{a}}_m'=\mathscr{I}_{\widetilde Z_1}^{<m>}$ and ${\frak{a}}_m''=\mathscr{I}_{\widetilde Z_2}^{<2m>}$ respectively. Thus, it follows from (i) and (ii) that $\text{LCT}_x(\frak{a}_{\bullet}')$ and $\text{LCT}_x(\frak{a}_{\bullet}'')$ are analytically constructible functions on $X$.

On the other hand, we note that $\text{LCT}_x(\frak{a}_{\bullet})=\text{LCT}_x(\frak{a}_{\bullet}')$ for any $x\in\widetilde Z_1\backslash\widetilde Z_2$ and $\text{LCT}_x(\frak{a}_{\bullet})=\text{LCT}_x(\frak{a}_{\bullet}'')$ for any $x\in\widetilde Z_2\backslash\widetilde Z_1$. Finally, combining the discreteness of analytic set $\widetilde Z_1\cap \widetilde Z_2$ in $X$ and the constructibility of $\text{LCT}_x(\frak{a}_{\bullet}')$ and $\text{LCT}_x(\frak{a}_{\bullet}'')$, we conclude that the function $\text{LCT}_x(\frak{a}_{\bullet})$ is constructible on $X$.
\end{proof}

\begin{remark} \emph{(An alternative argument on the proof of Theorem \ref{3D}).} \label{alter}
At first, we observe that $o$ is at most an algebraic singularity of $A$. Then, there exists an affine algebraic variety $\widehat A\subset\mathbb{C}^n$ such that $(\widehat A,o)=(A,o)$ and we may assume that all irreducible components of $\widehat A$ contain the origin $o$. Thus, by Theorem \ref{SiuPsh}, we have $$\text{LCT}_o(\widehat{\frak{a}}_\bullet)=\text{LCT}_o({\frak{a}}_\bullet)=1\ \text{and}\ \mathscr{I}(\varphi_{\widehat{\frak{a}}_{\bullet}})_o=\mathscr{I}(u)_o,$$
where $\widehat{\frak{a}}_{\bullet}=\{\widehat{\frak{a}}_m\}$ is the graded system of ideal sheaves on $\mathbb{C}^n$ given by $\widehat{\frak{a}}_m=\mathscr{I}_{\widehat A}^{<m_\Lambda>}$.

Combining the lower semi-continuity of log canonical thresholds with Proposition \ref{constructibility_3D}, we are able to infer that $\text{LCT}(\widehat{\frak{a}}_\bullet)=1$ on an algebraic Zariski open neighborhood $U$ of $o$ in $\mathbb{C}^n$. Thus, by Proposition \ref{Xu}, it follows that $\widehat{\frak{a}}_{\bullet}$ is finitely generated on $U$. Therefore, $\frak{a}_{\bullet}$ is finitely generated on a neighborhood of $o$ (in the complex topology), which implies that $c_o(\varphi_{\frak{a}_{\bullet},m_0})=1$ for some $m_0>0$, and $\mathscr{I}(\varphi_{\frak{a}_{\bullet},m_0})=\mathscr{I}(u)$ near $o$ by Lemma \ref{Siu_finite}. Finally, using (2) of Theorem \ref{SingPairs}, we conclude that the multiplier ideal subscheme $V(u)$ is weakly normal, shrinking $\Omega$ if necessary.
\hfill$\Box$
\end{remark}

In view of Proposition \ref{constructibility_3D}, it is natural to raise the following question:

\begin{question}  \label{Q_constructibility}
Let $X$ be a complex manifold and $Z\subset X$ an analytic set. Let $\frak{a}_{\bullet}=\{\frak{a}_m\}$ be the graded sequence of ideal sheaves on $X$ given by $\frak{a}_m=\mathscr{I}_Z^{<m_\Lambda>}$, where $m_k=e_km$ for each $k$ and $e_k$ is the codimension of $Z_k$ in $X$. Is the log canonical threshold $\emph{LCT}_x(\frak{a}_{\bullet})$ an analytically constructible function on $X$?
\end{question}

\begin{remark}
$(1)$ From the argument in the proof of Proposition \ref{constructibility_3D}, we can deduce that the answer of Question \ref{Q_constructibility} is positive if $Z\subset X$ is locally a complete intersection or all singularities of $Z$ are isolated.

$(2)$ Analogous to Remark \ref{alter}, we are able to derive that if $(A,o)$ is algebraic, then a positive answer of Question \ref{Q_constructibility} implies a positive solution to Question \ref{Q_WNormal}.
\end{remark}

Finally, by combining the argument in Remark \ref{alter} with Lemma \ref{equivalence}, we can deduce the following observation on log canonical locus of pair associated to graded system of ideals in the algebraic setting.

\begin{proposition} \label{LCS_pairs}
Let $X$ be a smooth complex projective variety and $\frak{a}_{\bullet}=\{\frak{a}_m\}$ any graded system of ideals on $X$ such that $\emph{LCT}(\frak{a}_{\bullet})=1$. If the answer of Question \ref{Q_constructibility} is positive, then any union of the irreducible components of multiplier ideal subscheme $V(\frak{a}_{\bullet})$ is weakly normal.
\end{proposition}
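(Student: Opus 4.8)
The plan is to localize, trade the abstract graded system for a Siu plurisubharmonic weight by means of Lemma \ref{equivalence}, and then run the argument of Remark \ref{alter} in arbitrary dimension, with Proposition \ref{constructibility_3D} replaced by the assumed positive answer to Question \ref{Q_constructibility}. Since weak normality is a local property, I fix a point $o\in V(\frak{a}_{\bullet})$ and work on a neighborhood $\Omega$ on which $X$ is identified with a domain in $\mathbb{C}^{n}$. Choosing a Siu plurisubharmonic function $u:=\varphi_{\frak{a}_{\bullet}}$, Lemma \ref{equivalence} gives $\mathscr{I}(u)=\mathscr{I}(\frak{a}_{\bullet})$, and combined with $\text{LCT}(\frak{a}_{\bullet})=1$ it yields $c_x(u)=1$ for every $x\in V(\frak{a}_{\bullet})$; in particular $c_o(u)=1$, and $A:=N(\mathscr{I}(u))=V(\frak{a}_{\bullet})$ is an algebraic subvariety because $\mathscr{I}(\frak{a}_{\bullet})$ is an algebraic ideal sheaf in our projective setting.

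The heart of the matter is the following local claim: for any plurisubharmonic $v$ near $o$ with $c_o(v)=1$ whose zero-set $A_v:=N(\mathscr{I}(v))$ is locally algebraic, the subscheme $V(v)$ is weakly normal at $o$. To prove it I would form the symbolic-power graded system $\frak{b}_m:=\mathscr{I}_{A_v}^{<m_\Lambda>}$ with $m_k=e_km$, $e_k$ the codimension of the $k$-th irreducible component of $A_v$, and invoke Theorem \ref{SiuPsh} to obtain $\text{LCT}_o(\frak{b}_{\bullet})=1$ and $\mathscr{I}(\frak{b}_{\bullet})_o=\mathscr{I}(v)_o$. Since $\frak{b}_{\bullet}$ is exactly a symbolic-power system of the shape covered by Question \ref{Q_constructibility}, its positive answer makes $x\mapsto\text{LCT}_x(\frak{b}_{\bullet})$ analytically constructible; together with the lower semicontinuity of log canonical thresholds, the finitely-many-value structure forces $c_x(\frak{b}_{\bullet})\geq1$ on a neighborhood of $o$, and this locus, being constructible and open in the complex topology, is Zariski-open. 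This produces an algebraic neighborhood $U$ of $o$ on which $\text{LCT}(\frak{b}_{\bullet})=1$, so Proposition \ref{Xu} forces $\frak{b}_{\bullet}$ to be finitely generated on $U$. By Lemma \ref{Siu_finite} there is then an $m_0$ and a weight $u_{A_v}:=\varphi_{\frak{b}_{\bullet},m_0}$ with analytic singularities satisfying $c_o(u_{A_v})=1$ and $\mathscr{I}(u_{A_v})_o=\mathscr{I}(v)_o$. As $(X,u_{A_v})$ is log canonical at $o$ with analytic singularities, part (2) of Theorem \ref{SingPairs} yields weak normality of $V(v)=N(\mathscr{I}(u_{A_v}))$ at $o$, proving the claim.

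Applying the claim to $v=u=\varphi_{\frak{a}_{\bullet}}$ shows that $V(\frak{a}_{\bullet})$ itself is weakly normal at $o$. To upgrade to arbitrary unions I follow the proof of Corollary \ref{union}: given a union $Z=\bigcup_{\alpha=1}^{t}A_{k_\alpha}$ of irreducible components of $A$ through $o$, of codimensions $e_{k_\alpha}$, set $\widetilde u:=\log\big(|\mathscr{I}_Z^{<m_\Lambda>}|+e^{u}\big)$ with $m_\Lambda=(e_{k_1},\dots,e_{k_t})$. Demailly's analytic approximation together with part (2) of Remark \ref{genericLelong} give $c_o(\widetilde u)=1$ and $\mathscr{I}(\widetilde u)_o=\mathscr{I}_{Z,o}$, so that $N(\mathscr{I}(\widetilde u))=Z$ is again algebraic; applying the local claim to $v=\widetilde u$ shows $Z$ is weakly normal at $o$. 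As $o\in V(\frak{a}_{\bullet})$ was arbitrary and weak normality is local, this proves the proposition.

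I expect the main obstacle to be the passage from the complex-topology neighborhood furnished by semicontinuity to the Zariski-open (quasi-projective) neighborhood demanded by the minimal-model-program input Proposition \ref{Xu}: it is precisely here that the constructibility hypothesis of Question \ref{Q_constructibility} is indispensable, since a constructible set that is open in the complex topology is automatically Zariski-open, whereas lower semicontinuity on its own only controls the generic nearby behavior of the threshold. A secondary technical point is the verification that $\mathscr{I}(\widetilde u)_o$ equals the reduced ideal $\mathscr{I}_{Z,o}$, which rests on pinning down the generic Lelong numbers of the symbolic-power term along each component of $Z$ through Remark \ref{genericLelong}(2).
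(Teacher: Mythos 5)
Your proposal is correct and follows essentially the same route the paper intends: the paper derives Proposition \ref{LCS_pairs} precisely by combining Lemma \ref{equivalence} (to replace $\frak{a}_{\bullet}$ by a Siu weight with $c_x=1$ on $V(\frak{a}_{\bullet})$) with the argument of Remark \ref{alter} (Theorem \ref{SiuPsh}, constructibility plus lower semicontinuity to get a Zariski-open locus, Proposition \ref{Xu}, Lemma \ref{Siu_finite}, and Theorem \ref{SingPairs}(2)), and handles arbitrary unions exactly as in Corollary \ref{union}. Your packaging of the middle steps as a reusable ``local claim'' for plurisubharmonic weights with algebraic zero-set is a clean reorganization of the same argument, not a different method.
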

\newpage

\section{Semi-log-canonical hypersurface singularities of dimension two}  \label{B}

\begin{table}[htbp]
\[
  \renewcommand\arraystretch{1.5}
  \newcolumntype{V}{!{\vrule width 1.7pt}}
  \begin{array}{VcVcVlV}
    \Xhline{1.7pt}
      \mathfrak{name}                          & \mathfrak{symbol}        & \multicolumn{1}{cV}{\mathfrak{defining\ function}\ h\in\mathbb{C}[z_1,z_2,z_3]}\\
    \Xhline{1.7pt}
      \text{smooth}                            & A_0                      & z_1\\
    \Xhline{1pt}
      \multirowcell{5}{\text{Du Val}}          & A_n                      & z_1^2+z_2^2+z_3^{n+1},\qquad\qquad\,\ n\geq1 \\\cline{2-3}
                                               & D_n                      & z_1^2+z_2^2z_3+z_3^{n-1},\qquad\quad\,\ n\geq4\\\cline{2-3}
                                               & E_6                      & z_1^2+z_2^3+z_3^4\\\cline{2-3}
                                               & E_7                      & z_1^2+z_2^3+z_2z_3^3\\\cline{2-3}
                                               & E_8                      & z_1^2+z_2^3+z_3^5\\\cline{2-3}
    \Xhline{1pt}
      \multirowcell{3}{\text{simple elliptic}} & X_{1,0}                  & z_1^2+z_2^4+z_3^4+\lambda z_1z_2z_3,\quad\lambda^4\not=64 \\ \cline{2-3}
                                               & J_{2,0}                  & z_1^2+z_2^3+z_3^6+\lambda z_1z_2z_3,\quad\lambda^6\not=432 \\ \cline{2-3}
                                               & T_{3,3}                  & z_1^3+z_2^3+z_3^3+\lambda z_1z_2z_3,\quad\lambda^3+27\not=0 \\ \cline{2-3}
    \hline
      \text{cusp}                              & T_{p,q,r}                & z_1z_2z_3+z_1^p+z_2^q+z_3^r,\ \,\quad\frac{1}{p}+\frac{1}{q}+\frac{1}{r}<1\\
    \Xhline{1pt}
      \text{normal crossing}                   & A_{\infty}               & z_1^2+z_2^2\\
    \hline
      \text{pinch point}                       & D_{\infty}               & z_1^2+z_2^2z_3\\
    \hline
      \multirowcell{5}{\text{degenerate cusp}} & T_{2,\infty,\infty}      & z_1^2+z_2^2z_3^2 \\\cline{2-3}
                                               & T_{2,q,\infty}           & z_1^2+z_2^q+z_2^2z_3^2,\qquad\qquad\ \ q\geq3\\\cline{2-3}
                                               & T_{\infty,\infty,\infty} & z_1z_2z_3\\\cline{2-3}
                                               & T_{p,\infty,\infty}      & z_1z_2z_3+z_1^p,\qquad\qquad\qquad p\geq3\\\cline{2-3}
                                               & T_{p,q,\infty}           & z_1z_2z_3+z_1^p+z_2^q,\qquad\quad\,\ \ q\geq p\geq3\\\cline{2-3}
    \Xhline{1.7pt}
  \end{array}
\]
\caption{\emph{Semi-log-canonical hypersurface singularities of dimension two}}
\label{list}
\end{table}

\vspace{.1in} {\em Acknowledgements}.
The author would like to sincerely thank Prof. Xiangyu Zhou and Qi'an Guan for their generous support and encouragements. He is very grateful to Prof. Chenyang Xu for valuable discussions and suggestions, especially for providing a proof of Proposition \ref{Xu}. He is also indebted to Prof. Jean-Pierre Demailly for helpful correspondence, and Prof. William Allen Adkins for sharing his works.
\newpage

\end{document}